\theoremstyle{plain}
\newtheorem{thm}{Theorem}[section]
\newtheorem{lem}{Lemma}[section]
\newtheorem{prop}{Proposition}[section]
\newtheorem{cor}{Corollary}[section]
\theoremstyle{definition}
\newtheorem{rem}{Remark}[section]
\newtheorem{defn}{Definition}[section]
\newtheorem{ex}{Example}[section]
\newtheorem{exs}{Examples}[section]
\newcommand{\A}{\mathcal A}
\newcommand{\B}{\mathcal B}
\newcommand{\C}{\mathcal C}
\newcommand{\E}{\mathcal E}
\newcommand{\I}{\mathcal I}
\newcommand{\J}{\mathcal J}
\newcommand{\LL}{\mathcal L}
\newcommand{\K}{\mathcal K}
\newcommand{\N}{\mathcal N}
\newcommand{\OO}{\mathcal O}
\newcommand{\PP}{\mathcal P}
\newcommand{\V}{\mathcal V}
\newcommand{\hs}{\hspace{3mm}}
\newcommand{\vs}{\vspace{1mm}}
\newcommand{\vsd}{\vspace{2mm}}
\newcommand{\st}{\hspace{2mm}|\hspace{2mm}}
\newcommand{\XC}{X_\C}
\newcommand{\um}{\underline m}
\def\som{\circle*{0.13}}
\newcommand{\bD}{\mathbb D}
\newcommand{\bK}{\mathbb K}
\newcommand{\bP}{\mathbb P}
\newcommand{\bR}{\mathbb R}
\newcommand{\bZ}{\mathbb Z}
\newcommand{\ga}{\alpha}
\newcommand{\De}{\Delta}
\newcommand{\g}{{\gamma}}
\newcommand{\G}{{\Gamma}}
\newcommand{\GC}{\Gamma_{\C}}
\newcommand{\ff}{\varphi}
\newcommand{\s}{\sigma}
\newcommand{\Si}{\Sigma}
\newcommand{\prox}{\rightarrow}
\newcommand{\pprox}{\leadsto}
\newcommand{\lprox}{\twoheadrightarrow}
\newcommand{\llprox}{\stackrel{\ell}{\lprox}}
\newcommand{\co}{{\mathcal O}}
\newcommand{\gp}{\mathbb{P}}
\newcommand{\gz}{\mathbb{Z}}
\newcommand{\gr}{\mathbb{R}}
\newcommand{\gc}{\mathbb{C}}
\newcommand{\cL}{{\mathcal L}}
\newcommand{\cf}{{\mathcal F}}
\renewcommand{\int}{{\rm int}}
\newcommand{\edim}{{\rm edim\;}}
\def\som{\circle*{0.13}}
\begin{document}
\title[]
{Configurations of infinitely near points}
\author[]{A. Campillo, G. Gonzalez-Sprinberg, F. Monserrat }
\thanks{The third author is supported by MEC
 MTM2007-64704 and Bancaixa P1-1A2005-08}
\address{}
\email{campillo@agt.uva.es, gonsprin@ujf-grenoble.fr,
framonde@mat.upv.es}
\subjclass{14C20, 14M25, 13B22}
\keywords{infinitely near points, proximity relations, Enriques diagrams,
clusters, characteristic cones, complete ideals, toric varieties,    }
\begin{abstract}
We present a survey of some aspects and new results on
configurations, i.e. disjoint unions of constellations of
infinitely near points, local and global theory, with some
applications and results on generalized Enriques diagrams,
singular foliations, and
 linear systems defined by clusters.
\end{abstract}
\maketitle
\section{Introduction}
\label{sec:intro} The subject in this paper can be understood to
be originated with the study of singularities of plane curves,
their desingularization by iterated point blowing-ups, and the
problems of existence of curves having either assigned
singularities or passing through a given set of points or
infinitely near points with prescribed multiplicities.\vs

Early, in the past century, Enriques gave an answer for the
existence of such curves, without conditions on the degree, in terms
of precise inequalities involving the prescribed multiplicities.
Some essential data for it, are the proximity relations among the
given infinitely near points, i.e. the incidence between points and
the transforms of exceptional divisors obtained by blowing-up
precedent points. This data is encoded in the so called Enriques
diagrams.\vs

Later, the local study of (complete) linear systems of curves
leads Zariski, in the thirties, to define complete ideals on
regular local rings, investigate their structure and establish
their theory in the smooth two dimensional case. Lipman has
continued the development of the theory of complete ideals for
singular two dimensional cases and in higher dimensions; in
particular, in the eighties, by establishing it for finitely
supported complete ideals, i.e. for ideals supported at the closed
point and such that there exist finite sequences of point
blowing-ups which make the ideals locally principal. In parallel,
also in the later eighties, Casas develops in modern geometrical
terms the two dimensional theory,  and applies it to solve the
problem of determining multiplicities of passage through
infinitely near points for polar curves of plane curve
singularities. \vs

Factorization theorems are obtained, in general, for these ideals
in terms of simple (or special *-simple) ideals corresponding to
finite chains of infinitely near points. An algebraic-geometric
point of view, in terms of geometry of infinitely near points, was
given by the first two authors and Lejeune-Jalabert in the
nineties. This includes the treatment of finitely supported toric
ideals, and a rather explicit theory for them.\vs

Applications of finitely supported complete ideals have been
developed, both in the local and global cases, by means of
constellations and configurations of infinitely near points
respectively. Such applications include those of different subjects
as, among others, the study of singularities of adjoints or polar
curves \cite{Ca}, \cite{casas}, \cite{delgado}, \cite{L4},
\cite{L5}, the Poincar\'e problem on the degree of projective
integral curves of first order algebraic differential equations
\cite{Ca-Ca},\cite{Ca-Ca-GF},\cite{C-G-GF-R},\cite{K-E},\cite{l-n},\cite{galmon2},
the Harbourne-Hirschowitz conjecture on special linear systems of
projective plane curves (and related topics) \cite{roe3},
\cite{roe2}, \cite{roe1}, \cite{Mir}, \cite{mons}, or the monodromy
and related conjectures for non degenerated hypersurfaces with
respect to clusters, \cite{lem1}, \cite{lem}.\vs

This paper gives an introduction to this subject as well as a
survey on the main results and several of its
applications. Some new results are also included.\vs

The paper is organized as follows.\vs

In Section \ref{sec:2} we give a survey on the local aspects of
the theory. After recalling the basic definitions of the geometric
theory extending to higher dimensions the classical case presented
in \cite{EC}, we relate this geometric framework with the
algebraic theory  introduced in \cite{Z,ZS}. It follows the
Lipman's unique factorization result (allowing negative exponents)
for finitely supported complete ideals \cite{L2} on a germ of
smooth variety, which extends to higher dimensions the unique
factorization of any complete ideal of a two dimensional local
ring into simple ideals. Each simple ideal in the factorization is
associated to one infinitely near point to the origin, and it
reflects geometrical properties of the chain of points which need
to be blown up in order to create it. A union of such chains with
the same origin will be called constellation along the paper.
Constellations with integral weights at their points are called
clusters. Complete ideals supported on a constellation correspond
to concrete clusters called idealistic. Such clusters with the
natural semigroup structure are called the Galaxy of the
constellation. Galaxies become closed under taking adjoints, and
the part consisting of idealistic clusters
corresponding to adjoint ideals is determined from the Galaxy itself.
\vs

Assuming that the characteristic of the ground field is zero, the
morphism $\s_{\C}: X_{\C} \rightarrow X $ obtained by composition
of the blowing-up of the points of a constellation ${\mathcal C}$ on
the smooth germ $X$ is shown to be an embedded resolution of
complete intersections defined by general elements in a finitely
supported complete ideal with that constellation as support. The
variety $X_{\C}$ will be called the sky of ${\mathcal C}$. On the
other hand, in Section \ref{sect:24} it is shown how the use of
characteristic cones provides a natural framework to study
factorizations properties for ideals.\vs

Finally, we consider the case of toric varieties and monomial
ideals, giving explicit description and results on toric
constellations and proximity in combinatorial terms, which are not
available in general non toric cases. Linear proximity, a  finer
concept than the one of proximity, is also characterized and used
for describing the Galaxy, the characteristic cone and
factorization properties. For the ideals in the subgalaxy
generated by the simple factors in Lipman factorization the whole
features of Zariski's theory hold as in the smooth two dimensional
case. Generalized Enriques diagrams, i.e. those which also
contain the linear proximity information, are also characterized.

Section \ref{sec:3} is devoted, on the one hand, to extend the
language and theory of constellations of infinitely near points
and clusters to the global situation. Here, we consider
configurations, i.e. finite union of constellations with origin at
different points of a smooth variety. The notions of sky, clusters
and Galaxy have an obvious sense also for configurations. On the
other hand, we focus our attention on the study of the cone of
curves $NE(Z)$ of a projective regular surface $Z$, mainly bearing
in mind the case in which $Z$ is rational. We give a description
of some generalities and known properties of $NE(Z)$, showing a
variety of different shapes that it may have, and, in particular,
we focus on the case of finite generation (polyhedrality).
Rational surfaces with polyhedral cone of curves are interesting
issues that have several applications, as we shall see in the last
section. \vs

The so called P-sufficient configurations, introduced in
\cite{galmon} and \cite{galmon3}, are configurations over a
relatively minimal rational surface $X$ satisfying a numerical
condition which depends only on their P-Enriques diagram. The
interest of such a configuration ${\mathcal C}$ is given by the
fact that its sky $X_{\mathcal C}$ has a polyhedral cone of
curves. We consider also configurations ${\mathcal C}$ of base
points of 1-dimensional linear systems (pencils) $\varrho$ on a
projective regular surface. We give a description of the face of
$NE(X_{\mathcal C})$ generated by the classes of the integral
components of the curves in $\varrho$, using it to give (in
characteristic zero) a characterization of the irreducible pencils
(that is, those with integral general curves) in terms of their
clusters of base points. When $\varrho$ is a pencil at infinity
a great deal of information is known on the cone of curves and the
characteristic cone of $X_{\mathcal C}$ \cite{CPR, CPR2} and we
summarize it. In Section \ref{aplications} we show some of
the above mentioned applications of the theory of clusters of
infinitely near points in the global case. \vs

In Section \ref{foliations} we describe some results on the theory
of foliations based on aspects in preceding sections. All these
results are related to the classical Poincar\'e problem on
establishing bounds for the degree of projective curves which are
invariant by an algebraic plane foliation. In Section \ref{hh} we
show that the language of infinitely near points and idealistic
clusters can also be applied to give new results on a conjecture
(the Harbourne-Hirschowitz Conjecture) which deals with the
dimension of the linear systems of the projective plane defined by
clusters whose associated configuration is a set of general points
of the plane. In both cases, there is a very extensive literature
giving either partial proofs for problem or for the conjecture or
dealing with related subjects. We recall some results in cases for
which the Poincar\'e problem or the Harbourne-Hirschowitz Conjecture
have a satisfactory answer. Those results are established in terms
of clusters and their proofs involve the knowledge of the cone of
curves of the surfaces associated with certain pencils at infinity
and certain properties of the P-sufficient configurations.
\section{Local theory}\label{sec:2}
\subsection{Constellations, proximity and Enriques diagrams.}
  Let $X$ be a regular variety of dimension $d$ at least two, over an algebraically
closed field $\bK$. In the sequel we consider varieties obtained from $X$ by a finite
sequence of closed point blowing-ups. A point $P$ is {\em infinitely near} $Q \in X$ if
$Q$ is the image of $P$ under the composition of blowing-ups; denote this relation by $P \geq Q$.
A geometric description of the blowing-up of a point
may be given as an avatar of the graph construction related to the definition of
projective space.
Let $\ff : ({\bK}^d \setminus \{O\}) \prox {\bP}^{d-1}$ with $\ff(x)$
the line joining $O$ to $x$.
Consider the closure $\overline{\G}_{\ff}$ of the graph of $\ff$
in ${\bK}^d \times {\bP}^{d-1}$. The blowing-up of ${\bK}^d$ with center $O$ is the proper birational morphism given
by the projection on the first factor $\s : Bl_0 {\bK}^d:=\overline{\G}_{\ff} \prox {\bK}^d$.
The exceptional fiber over $O$ is a rational divisor $B_0 \simeq {\bP}^{d-1}$.\vs
\begin{defn} \label{defn:const}
A {\em constellation of infinitely near points} (in short, a constellation) is a set
$\C=\{Q_0, \ldots , Q_n\}$, with $Q_i \geq Q_0 \in {X}_0 = X$,
such that
\( Q_i \in Bl_{Q_{i-1}} X_{i-1} =: X_{i} \stackrel{\s_{i-1}}{\longrightarrow} X_{i-1}\),
 for {\small $1 \leq i \leq n$};
where $Bl_{Q_{i-1}} X_{i-1}$ denotes the blowing-up of $X_{i-1}$  with center
$Q_{i-1}$ .
\end{defn}
The point $Q_0$ is called the {\em origin} of the constellation $\C$.
We call also the dimension of $X$ the {\em dimension} of $\C$. \vs
Let $\s_{\C} = \s_{0} \circ \dots \circ \s_{n} : X_{\C} \rightarrow X_{0}$
denote the composition of the blowing-ups of all the points of $\C$, where $\XC = X_{n+1}$.
Two constellations $\C$ and $\C'$ over $X$ are identified if there is an automorphism $\pi$
of X and an isomorphism $\pi' : X_{\C} \rightarrow X_{\C'}$ such that
$\s_{\C'} \circ \pi' = \pi \circ \s_{\C}$.
The relation $Q_j \geq Q_i$ is a partial ordering on the set of points of $\C$.
If this ordering is total, i.e. $Q_n \geq \cdots \geq Q_0$, we say that $\C$ is a
{\em chain} constellation. For example, for any constellation $\C$ and any $Q \in \C$, the set
$\C^Q := \{P \in \C \st Q \geq P\}$ of points preceding $Q$ is a chain constellation.
The number of points in $\C^Q$, different from $Q$, is called the {\em level} of $Q$.
The root of $C$ is the only point of level 0.
For each point $Q \in \C$ let $Q^+$ be the set of points of $\C$
{\em consecutive} to $Q$, i.e. the points following $Q$ for the ordering $\geq$ such that there is no strict
intermediate point; write $|Q^+|$ for the cardinal of this set. If $Q^+$ has only one
point, it denotes this point. \vs
For each point $Q = Q_i$, let $B_Q$ (or $B_i$) be the {\em exceptional} divisor $\s_{i}^{-1}(Q)$
on $X_{i+1}$, and $E_Q$ (or $E_i$) its successive {\em strict} (or {\em proper}) transforms on
any $X_j$ (which will be specified if necessary) with $Q_j \ge Q_i$, in particular in $X_{\C}$.
The {\em total} transforms are denoted by $E^*_Q$ or $E^*_i$.
The sets of divisors
$\{E_Q \st Q\in \C\}$ and $\{E^*_Q \st Q \in \C\}$, considered in $\XC$, are two basis of
the lattice $N^1 = \bigoplus_{Q \in \C} \bZ E_Q \cong \bZ^{n+1}$ of divisorial cycles with
{\em exceptional} support in $\XC$.
\begin{defn}
A point $Q_j \ge Q_i$ is {\em proximate} to $Q_i$ if $Q_j \in E_i$ in $X_j$;
notation : $Q_j \prox Q_i$ (or $j \prox i$).
The {\em proximity index} of a point $Q_j$ is defined as the number $ind(Q_j)$ of points
in $\C$ approximated by $Q_j$, i.e. $ind(Q_j) := \# \{Q_i \in \C \st Q_j \prox Q_i\}$.
\end{defn}
 If $R \in Q^+$ then $R \prox Q$, these are the so called trivial proximities.
If $R$ belongs to the intersection of several exceptional divisors produced by
blowing-up precedent points then $R$ is proximate to all these points. Since the irreducible
exceptional divisors we consider have normal crossing, in dimension $d$ a point may be proximate to at most
$d$ points. If the dimension of $C$ is {\em at least three}, then $R \prox Q$ if and only $R \ge Q$
and $E_R \cap E_Q \ne \emptyset$ in $\XC$. \vs
 Note that if $R \prox Q$ then $R \ge  Q$ , but the converse does not hold in general.
The proximity relation $(\prox)$ is a binary relation on the set of
points of a constellation, but not an ordering in general.
\begin{rem}
For each point $Q_i$, the only irreducible exceptional divisors, besides $E_i$,
appearing in the total transform $E_i^*$, in $\XC$, are exactly those
produced by blowing-up the points proximate to $Q_i$. Therefore
$E_i = E_i^* - \sum_{j \prox i} E_j^*$ . The so called {\em proximity matrix}
$((p_{ji}))$, with $p_{ii} = 1$, $p_{ji} =-1$ if $j \prox i$ and 0 otherwise,
is the basis change matrix from the $E_i$'s to the $E_j^*$'s
\end{rem}
\begin{defn}
The (proximity) {\em Enriques diagram} or {\em P-Enriques diagram} of a constellation $\C$
is the rooted tree $\GC$ equipped with a binary relation
$(\pprox)$, whose vertices are in one to one correspondence with
the points of $\C$, the edges with the couples of points $(R, Q)$
such that $R \in Q^+$, the root with the origin of $\C$, and the
relation $(\pprox)$ with the proximity relation $(\prox)$.
\end{defn}
 The Enriques diagram codes the chronology and incidence data of the
points in a constellation.
Any (finite) rooted tree (without the supplementary data of a binary
relation) may be the support graph of an Enriques diagram.
\begin{rem}
There is another graph in {\em dimension two} that may be associated to the (normal crossing) family of
irreducible exceptional divisors obtained by blowing-up the points of a constellation.
This is the so called {\em dual graph},
whose vertices are in bijection with the divisors and each edge is associated to the
intersection point of two divisors. The graph supporting the Enriques
diagram of a constellation and the dual graph  of the exceptional divisors may be quite
different. For instance the first one may be a chain but not the other, or viceversa.
\end{rem}
 A natural question is how to characterize the Enriques diagrams, i.e. which rooted
trees equipped with a binary relation on the set of vertices are induced by some constellation.
 Given a rooted tree $\G$, denote  by $(\succeq)$ the natural partial ordering on the set
$\V(\G)$ of its vertices : $p \succeq q$ if $q$ belongs to the chain from $p$ to the root;
similarly, if $(\pprox)$ is a binary relation on $\V(\G)$, let
$ind(q) = \# \{p \in \V(\G) \st q \pprox p\}$.
 For each vertex $q$, let $q^+$ be the set of consecutive vertices to $q$ with respect
to the ordering $(\succeq)$.
\begin{thm}   \label{thm:PEd}
Let $\G$ be a finite rooted tree equipped with a binary relation
$(\pprox)$ on the set of its vertices. Then $\G$ is the Enriques diagram of
a constellation of infinitely near points $\C$ and
$(\pprox)$ is induced by the proximity relation on $\C$ if and
only if , for any vertices $p, q, r$ of $\G$, the following
conditions are satisfied:
\begin{itemize}
\item[(a)]  $q \pprox p  \hs \Longrightarrow \hs q \succeq p$ , $q \ne p$
\item[(b)]  $q \in p^+ \hs \Longrightarrow \hs q \pprox p$
\item[(c)]  $r \succeq p \succeq q$ and $r \pprox q \hs \Longrightarrow \hs p \pprox q$
\end{itemize}
If these conditions hold, then the minimum dimension $d_{\PP}$ of a constellation
whose Enriques diagram is the given one is at most
$max (2, max_{q \in \V(\G)} (ind(q)) + 1)$.
\end{thm}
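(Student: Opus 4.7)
The proof splits into the two implications plus the dimension bound. For the necessity direction, I would verify (a)--(c) directly from the geometric definition of proximity. Condition (a) is immediate: $Q_j \prox Q_i$ forces $Q_j \in E_i \subset X_j$, hence $j > i$, so $Q_j \succ Q_i$. For (b), any $R \in Q^+$ lies in the new exceptional divisor $B_Q = E_Q$ of $X_{Q+1}$ by construction, so $R \prox Q$. The crucial one is (c), which I would prove contrapositively: if $p \not\prox q$, then $p \notin E_q$ in $X_p$, so the blowing-up of $p$ leaves $E_q$ unchanged near $B_p$, making $E_q \cap B_p$ empty on $X_{p+1}$. Propagating along the chain from $p$ to $r$, the strict transform of $E_q$ never meets any point above $B_p$, hence $r \not\prox q$.

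For the converse, I would construct a realizing constellation inductively in $X = \bK^d$ with $d = \max(2, \max_{q} ind(q) + 1)$. Fix a level-compatible enumeration of $\V(\G)$ and realize its vertices one by one. Suppose we have built a partial realization through the point $P$ corresponding to the parent $p$ of the next vertex $r$ to be placed. By the normal-crossing nature of the exceptional divisors built so far, the strict transforms of the $E_Q$ for $Q$ corresponding to $\{q : p \pprox q\}$ cut out $ind(p)$ hyperplanes in general position on $B_P \simeq \bP^{d-1}$. Setting $J := \{q \neq p : r \pprox q\}$, I would use (a) together with the tree structure to get $r \succeq p \succeq q$ for every $q \in J$, and then invoke (c) to conclude $p \pprox q$, so $J \subseteq \{q : p \pprox q\}$. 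Pick $R$ on the intersection of the $|J|$ hyperplanes indexed by $J$, off the remaining $ind(p) - |J|$ hyperplanes of $B_P$, and off the previously placed siblings; the intersection has dimension $(d-1) - |J| = d - ind(r) \ge 1$, so such a generic $R$ exists. This $R$ is proximate exactly to $P$ and to the $Q$'s for $q \in J$, that is, exactly to the prescribed set $\{q : r \pprox q\}$, so the inductive step matches $\G$ at $r$.

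The central technical issue---and the main obstacle---is verifying that (c) is the correct combinatorial shadow needed for the induction to go through: it is precisely (c) that certifies every hyperplane we wish $R$ to lie on is actually present in $B_P$, coming from a divisor that already passed through $P$; without (c) the prescribed proximity pattern would be geometrically unrealizable. The dimension bound $\max ind(q) + 1$, rather than the naively sufficient $\max ind(q)$, is what makes the intersection of required hyperplanes strictly positive-dimensional, leaving room to avoid the finitely many forbidden hyperplanes and previously placed siblings by a generic choice. Finally, the $\max(2,\cdot)$ clause covers the degenerate cases---a single vertex, or a chain with no non-trivial proximities---where we still need a two-dimensional ambient for point blow-ups to be meaningful.
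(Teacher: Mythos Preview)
Your proof is correct and follows essentially the same approach as the paper's: an induction adding one vertex at a time, using (a) and (c) to ensure that the divisors $E_Q$ for $q$ with $r \pprox q$ all pass through the parent $P$, then choosing $R$ generically in the resulting linear subspace of $B_P$ to realize exactly the prescribed proximities. Your necessity argument for (c) is spelled out in more detail than the paper's (which simply says the conditions ``follow easily''), and your dimension count $(d-1)-|J| = d - ind(r) \ge 1$ matches the paper's computation that $\dim F = d+1-|Y|$ with $|Y| = ind(r)$.
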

\begin{proof}
The necessity of the conditions follows easily.
For the sufficiency, proceed by induction on the number $| \V(\G) |$ of vertices.
If $| \V(\G) | > 1$, let $r$ be a maximal vertex of $\G$, and assume that a
constellation $\C'$ of dimension $d$ works for $\G' = \G \setminus \{r\}$.
Let $r \in q^+$, and $Q$ be the point of $\C'$ corresponding to $q$.
The set $Y := \{P \in \C' \st r \pprox p\}$ is contained in ${\C'}^Q$ by (a)
and $Q \in Y$ by (b).
By (c) one has $Q \prox P$ for each $P \in Y \setminus \{Q\}$, so that
$Q \in F := \bigcap_{P \in Y, P \ne Q} E_P$. It follows that $F \ne \emptyset$
and dim$(F) = d + 1 - | Y |$, by the normal crossing of the divisors $E_P$,
and on the other hand $ind(Q) \geq | Y \setminus \{Q\} | = | Y | - 1$.
Now, we need a point $R$ (in $X_{\C'}$) corresponding to $r$,
having the corresponding proximities, i.e.  a point
$R \in B_Q \bigcap F$ but not in $(Q^+ \bigcup _{P \in \C^Q \setminus Y} E_P )$ .
Such a point exists if $d \ge max_{p\in \V(\G)} ind(p) + 1$ (and at least $2$),
which is not less than $max_{p\in \V(\G')} ind(p) + 1$ so the inductive hypothesis
applies. This number is attained.
\end{proof}
\begin{rem}
The minimum dimension $d_{\PP}$ of constellations inducing a given P-Enriques diagram
may be one less than in the general case if there are no two maximal vertices $r$, with
maximum indices, say
$r_1$ and $r_2$, both in $q^+$, such that $ind(r_i) = ind(q) + 1$. Precisely, the
minimum dimension is
$d_{\PP}=max (\; 2\; ,\; max_{q \in \V(\G)} (ind(q) + t(q))\;)$,
where $t(q)=0$ (resp. $t(q)=1$) if
$s(q) := \#\{r\in q^+ \st ind(r) > ind(q)\} \le 1$
(resp. if $s(q) \geq 2$).
\end{rem}
\vsd

\subsection{Finitely supported ideals and idealistic
clusters}\label{sect:22}
\begin{defn}
{\rm
A {\it cluster} is a pair ${\mathcal K}=(\C, \underline{m})$ where
$\C=\{Q_0,\ldots,Q_n\}$ is a constellation and
$\underline{m}=(m_0,\ldots,m_n)$ is a sequence of
integers. The integer $m_i$ is called the {\it weight} (or {\it
virtual multiplicity}) of $Q_i$ in the cluster. }
\end{defn}
Given a cluster $\K$ as above, we can associate to it the following
divisor in $X_{\C}$ with exceptional support: $D(\K):=\sum_{i=0}^n
m_i E_i^*$. Hence, given a constellation $\C$, the choice of a
weight sequence $\underline{m}$ is equivalent to the choice of a
divisor in the semigroup $\sum_{i=0}^n \bZ_{\geq 0} E_i^*$.
\begin{defn}
{\rm Given a closed point $Q_0\in X$, an ideal $I$ in
$R_{Q_0}:=\OO_{X,Q_0}$ is {\it finitely supported} if $I$ is primary
for the maximal ideal $M_{Q_0}$ of $R_{Q_0}$ and there exists a
constellation $\C$ of infinitely near points of $X$ such that
$I\OO_{X_{\C}}$ is an invertible sheaf. An infinitely near point $P$
of $Q_0$ is a {\it base point} of $I$ if $P$ belongs to the
constellation with the minimal number of points with the above
property. We shall denote by $\C_I$ the constellation of base points
of $I$. }
\end{defn}
Given a finitely supported ideal $I$ in $R_{Q_0}=\OO_{X,Q_0}$, with
associated constellation of base points $\C_I=\{Q_0,\ldots,Q_n\}$,
we can associate to it a cluster $\K=\K_I=(\C_I,\underline{m})$, called
{\it cluster of base points of $I$}, as we shall describe now.
For any point $Q_i$, $0\leq i\leq n$, consider the chain
constellation of preceding points
$\C^{Q_i}=\{P_0=Q_0,P_1,\dots,P_r=Q_i\}$; the {\it weak transforms}
$I_{P_j}$ of $I$ at the points $P_j$ are defined by induction on $r$
by setting $I_{Q_0}=I$ and, for $i>0$, $I_{P_i}$ is the ideal in the
local ring $(R_{P_i}, M_{P_i})$ given by $$(x)^{-{\rm
ord}_{P_{i-1}}(I_{P_{i-1}})} I_{P_{i-1}} R_{P_i},$$ where,  ${\rm
ord}_{P_{i-1}} (I_{P_{i-1}}):=\max \{n\mid I_{P_{i-1}}\subseteq
M_{P_{i-1}}^n\}$ and $x$ is a generator of the principal ideal
$M_{P_{i-1}}R_{P_i}$.
For any $i$, $0\leq i\leq n$, the weight $m_i$ is defined to be
${\rm ord}_{Q_i}(I_{Q_i})$. Notice that the ideal $I_{Q_i}$ is
finitely supported and $m_i>0$. Moreover, it follows by induction
the following equality between ideal sheaves on $X_{\C_I}$:
$$ I\OO_{X_{\C_I}}=\OO_{X_{\C_I}}(-D(\K_{I})).  $$
\begin{rem}
{\rm The completion (or integral closure) $\overline{I}$ of a
finitely supported ideal $I$ is again finitely supported and
$\K_{\overline{I}}=\K_{I}$ (see \cite[Prop. 1.10]{L2}).
}
\end{rem}
For a fixed constellation $\C$ rooted at $Q_0\in X$, we shall denote
by $\J_{\C}$ the set of of finitely supported complete ideals $I$ of
$\OO_{X,Q_0}$ such that $\C_I\subseteq \C$. This set $\J_{\C}$ can
be  endowed with an operation, called $*$-product: given $I_1,I_2\in
\J_{\C}$, $I_1* I_2$ is defined to be the integral closure of the
product ideal $I_1 I_2$. Notice that $(\J_{\C},
*)$ has structure of commutative semigroup.
\begin{defn}
{\rm We shall say that a cluster $\K=(\C,\underline{m})$ is {\it
idealistic} if there exists a finitely supported ideal $I$ in
$R_{Q_0}$ such that $I\OO_{X_{\C}}=\OO_{X_{\C}}(-D(\K))$. Notice
that this implies that $I\in \J_{\C}$ and that $m_i$ is the weight
of $Q_i$ in $\K_{I}$ if $Q_i\in \C_{I}$ and $m_i=0$ otherwise. The
{\it galaxy} of $\C$ will be the set ${\mathcal G}_{\C}$ of
idealistic clusters on $\C$.
}
\end{defn}
From \cite[Sect. 18]{L1} it follows a characterization of the
idealistic clusters:
\begin{prop}
A cluster $\K=(\C,\underline{m})$ is idealistic if and only if
$\underline{m}\not=0$ and $-D(\K)$ is $\sigma_{\C}$-generated, i.e.
$\OO_{X_{\C}}(-D(\K))$ is generated by its global sections on a
neighbourhood of the exceptional fiber of
$\sigma_{\C}:X_{\C}\rightarrow X$.
\end{prop}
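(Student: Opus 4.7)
The plan is to prove both implications by unpacking the definitions, using the natural evaluation map $\sigma_{\C}^{*}\sigma_{\C*}\OO_{X_{\C}}(-D(\K)) \to \OO_{X_{\C}}(-D(\K))$ as the bridge between the identity $I\OO_{X_{\C}} = \OO_{X_{\C}}(-D(\K))$ and the assertion that $-D(\K)$ is $\sigma_{\C}$-generated.

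For the forward direction, I would assume $\K$ is idealistic, realized by some $I\in \J_{\C}$. Since $I$ is $M_{Q_0}$-primary and proper we have $m_0={\rm ord}_{Q_0}(I)>0$, so $\underline{m}\ne 0$. For the generation statement, $I\OO_{X_{\C}}$ is by construction the image of the natural map $\sigma_{\C}^{*}I\to \OO_{X_{\C}}$; choosing generators $f_1,\ldots,f_r$ of $I$ on a neighborhood $U\ni Q_0$ (possible by coherence), their pullbacks $\sigma_{\C}^{*}f_i$ are global sections of $\OO_{X_{\C}}(-D(\K))$ on $V:=\sigma_{\C}^{-1}(U)\supseteq \sigma_{\C}^{-1}(Q_0)$ that generate it there, which is precisely the definition of $\sigma_{\C}$-generation.

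For the converse, I would take as candidate the stalk at $Q_0$ of the pushforward:
$$ I := \left(\sigma_{\C*}\OO_{X_{\C}}(-D(\K))\right)_{Q_0}. $$
Since $D(\K)$ is effective, this embeds into $\sigma_{\C*}\OO_{X_{\C}}=\OO_{X}$ (by smoothness of $X$ and properness of $\sigma_{\C}$), so $I$ is genuinely an ideal of $R_{Q_0}$. Four things must then be checked: (i) $I\subseteq M_{Q_0}$, because $\underline m\ne 0$ forces the pullback of any element of $I$ to vanish on some $E_i^{*}\subseteq \sigma_{\C}^{-1}(Q_0)$, hence at $Q_0$; (ii) $R_{Q_0}/I$ has finite length, since $\sigma_{\C}$ is an isomorphism off $Q_0$ and therefore the coherent ideal sheaf $\sigma_{\C*}\OO_{X_{\C}}(-D(\K))$ has stalk $\OO_X$ away from $Q_0$ and support $\{Q_0\}$; (iii) $I\OO_{X_{\C}}=\OO_{X_{\C}}(-D(\K))$, which is literally the hypothesis of $\sigma_{\C}$-generation, rephrased via the adjunction between $\sigma_{\C}^{*}$ and $\sigma_{\C*}$; (iv) $I$ is then finitely supported with $\C_I\subseteq \C$, because $I\OO_{X_{\C}}$ is invertible by (iii).

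The main obstacle I expect is step (ii) --- the $M_{Q_0}$-primary property --- which requires invoking properness of $\sigma_{\C}$ (so that pushforward of coherent sheaves stays coherent) together with normality of $X$ (so that $\sigma_{\C*}\OO_{X_{\C}}=\OO_{X}$). Once these are in place, the primary property reduces to the standard fact that a coherent ideal sheaf on $X$ whose zero locus is $\{Q_0\}$ has Artinian stalk there. The rest is essentially a formal translation between invertible subsheaves of $\OO_{X_{\C}}$ with exceptional support and finitely supported ideals of $R_{Q_0}$.
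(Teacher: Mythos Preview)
The paper does not supply its own proof of this proposition; it simply attributes the statement to Lipman, citing \cite[Sect.~18]{L1}. Your direct argument via the adjunction between $\sigma_{\C}^{*}$ and $\sigma_{\C*}$ is correct and is the standard way to unpack what that citation contains.

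One small point deserves tightening. In the converse direction you write ``since $D(\K)$ is effective'' in order to embed $\sigma_{\C*}\OO_{X_{\C}}(-D(\K))$ into $\OO_X$. But clusters are defined with arbitrary integer weights, so effectivity of $D(\K)$ is not given by hypothesis. The clean way around this is to note that any section $s$ of $\OO_{X_{\C}}(-D(\K))$ over $\sigma_{\C}^{-1}(U)$ is regular on $\sigma_{\C}^{-1}(U)\setminus\sigma_{\C}^{-1}(Q_0)$ (because $D(\K)$ has purely exceptional support), hence regular on $U\setminus\{Q_0\}$, and therefore on all of $U$ by normality of $X$ (since $\{Q_0\}$ has codimension $\geq 2$). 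This already gives $I\subseteq R_{Q_0}$ without assuming effectivity; the latter then follows \emph{a posteriori} from $I\OO_{X_{\C}}=\OO_{X_{\C}}(-D(\K))$, since the left side is an ideal sheaf. With that adjustment your steps (i)--(iv) go through exactly as written.
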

As a consequence of this proposition, one has that the galaxy
${\mathcal G}_{\C}$ of a constellation $\C$ has a natural structure
of commutative semigroup with the following operation: if
$\K_i=(\C,\underline{m}_i)\in {\mathcal G}_{\C}$, $i=1,2$,
$\K_1+\K_2:=(\C,\underline{m}_1+\underline{m}_2)$. Moreover, if
$I_1,I_2 \in \J_{\C}$, it is satisfied that $\K_{I_1 *
I_2}=\K_{I_1}+\K_{I_2}$. Also, Proposition 1.10 of \cite{L2} shows
that, given a constellation $\C$ and an idealistic cluster $\K \in
{\mathcal G}_{\C}$, there exists a unique finitely supported
complete ideal $I_{\K}\in \J_{\C}$ such that
$I_{\K}\OO_{X_{\C}}=\OO_{X_{\C}}(-D(\K))$; actually, it is the stalk
at the root of $\C$ of the sheaf ${\sigma_{\C}}_*
\OO_{X_{\C}}(-D(\K))$.
If we set $\mathbb{E}_{\C}^{\sharp}$ the semigroup of effective
divisors $D$ on $X_{\C}$ with exceptional support such that
$D\not=0$ and $\OO_{X_C}(-D)$ is $\sigma_{\C}$-generated, above
considerations are summarized  in the following result:
\begin{prop}\label{prop:2}
Given a constellation $\C$, the assignments $\K \mapsto D(\K)$ and
$\K \mapsto I_{\K}$ give isomorphisms of commutative semigroups
(${\mathcal G}_{\C},+) \rightarrow (\mathbb{E}_{\C}^{\sharp},+)$ and
$({\mathcal G}_{\C},+)\rightarrow (\J_{\C},*)$ respectively. The
inverse maps are defined by the assignments $D\mapsto \K_J$ (where
$J$ denotes the stalk of ${\sigma_{\C}}_* \OO_{X_{\C}}(-D)$ at the
origin of $\C$) and $I\mapsto \K_I$, respectively.
\end{prop}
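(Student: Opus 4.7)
The plan is essentially organizational: the characterization of idealistic clusters just stated, together with the cited Lipman Proposition 1.10 of \cite{L2}, furnishes all the content. For each of the two assignments I would verify (i) well-definedness, (ii) the semigroup homomorphism property, (iii) bijectivity, and (iv) the form of the claimed inverse.

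For $\K \mapsto D(\K) \colon \mathcal{G}_\C \to \mathbb{E}_\C^\sharp$, well-definedness is exactly the preceding proposition, since membership of $D(\K)$ in $\mathbb{E}_\C^\sharp$ amounts to $\underline{m} \neq 0$ together with $\sigma_\C$-generation of $-D(\K)$. Additivity is immediate from the formula $D(\K) = \sum_i m_i E_i^*$. Injectivity follows from the fact that $\{E_i^*\}_{Q_i \in \C}$ is a $\bZ$-basis of the lattice of exceptionally supported divisorial cycles, so $D(\K)$ determines $\underline{m}$. Surjectivity: any $D \in \mathbb{E}_\C^\sharp$ is effective, exceptionally supported, and nonzero, so $D = \sum_i m_i E_i^*$ for a unique $\underline{m} \neq 0$, and the characterization then makes $(\C, \underline{m})$ idealistic. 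For the stated inverse $D \mapsto \K_J$ with $J = ({\sigma_\C}_* \OO_{X_\C}(-D))_{Q_0}$, Lipman's result provides that $J \in \J_\C$ is the unique finitely supported complete ideal with $J \OO_{X_\C} = \OO_{X_\C}(-D)$; but the construction of $\K_J$ yields $J \OO_{X_\C} = \OO_{X_\C}(-D(\K_J))$, so $D = D(\K_J)$ by uniqueness.

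For $\K \mapsto I_\K \colon \mathcal{G}_\C \to \J_\C$, existence and uniqueness of $I_\K$ with $I_\K \OO_{X_\C} = \OO_{X_\C}(-D(\K))$ is again Lipman's proposition. Injectivity is clear: $I_\K$ recovers $\OO_{X_\C}(-D(\K))$, hence $D(\K)$, hence $\K$ by the first isomorphism. Surjectivity uses the induction-based equality $I \OO_{X_{\C_I}} = \OO_{X_{\C_I}}(-D(\K_I))$ noted in the text: extending $\K_I$ by assigning weight $0$ to each point of $\C \setminus \C_I$ produces a cluster $\tilde{\K}_I$ on $\C$ whose associated divisor pulls back correctly to $X_\C$, whence $I = I_{\tilde{\K}_I}$. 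The identity $\K_{I_1 * I_2} = \K_{I_1} + \K_{I_2}$ mentioned in the text, combined with uniqueness of $I_{(-)}$, yields $I_{\K_1 + \K_2} = I_{\K_1} * I_{\K_2}$, so the assignment is a semigroup morphism, and the composition $I \mapsto \K_I \mapsto I_{\K_I}$ is the identity by the same uniqueness.

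The main obstacle to making this fully watertight is the surjectivity step, namely checking that extending an idealistic cluster by zero weights across a larger constellation $\C \supseteq \C_I$ is compatible with both the divisor $D(\tilde{\K}_I)$ and the $\sigma$-generation property. Additivity of total transforms along the further point blow-ups handles the divisor side, since the extra weights vanish; $\sigma_\C$-generation is preserved because the further blow-ups are isomorphisms away from the newly created centers, so global generation near the exceptional fiber is not disturbed. Once this compatibility is granted, the two isomorphisms of semigroups assemble with no further difficulty, and they are visibly compatible with the bijection $D(\K) \leftrightarrow I_\K$ furnished by Lipman's proposition.
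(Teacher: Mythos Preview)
Your proposal is correct and follows the paper's approach: the paper presents Proposition~\ref{prop:2} explicitly as a summary of the preceding discussion (the characterization of idealistic clusters, the semigroup structure on $\mathcal{G}_{\C}$, the identity $\K_{I_1*I_2}=\K_{I_1}+\K_{I_2}$, and Lipman's \cite[Prop.~1.10]{L2}), with no separate proof, and you have simply organized those ingredients into an explicit verification. The compatibility issue you flag when extending $\K_I$ by zero weights to all of $\C$ is exactly what the paper addresses in the remark immediately following the proposition.
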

\begin{rem}
{\rm Note that, in the above statement, for each ideal $I\in
\J_{\C}$ we are identifying the cluster $\K_I=(\C_I,\underline{m})$
with $(\C,\underline{m}')$, where $m'_i=m_i$ if $Q_i\in \C_I$ and
$m_i'=0$ otherwise. }
\end{rem}
Let ${\rm Nef}(X_{\C}/X)$ be the semigroup of non zero
$\sigma_{\C}$-nef divisors on $X_{\C}$ (also called either {\it
numerically effective} or {\it semiample} divisors), that is, those
exceptional divisors $D\not=0$ such that $D\cdot C\geq 0$ for any
exceptional curve (i.e. effective exceptional irreducible 1-cycle)
$C$ on $X_{\C}$.
\begin{prop}\cite[Prop. 1.22]{CGL}\label{prop:23}
If $\C$ is a constellation over $X$ then
$\mathbb{E}_{\C}^{\sharp}\subseteq -{\rm Nef}(X_{\C}/X)$.
\end{prop}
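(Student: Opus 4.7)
I would take an arbitrary $D\in\mathbb{E}_{\C}^{\sharp}$ and verify the two conditions for membership of $-D$ in $\mathrm{Nef}(X_{\C}/X)$: that $-D\ne 0$, which is immediate since $D\ne 0$ by definition of $\mathbb{E}_{\C}^{\sharp}$; and that $(-D)\cdot C\ge 0$ for every irreducible exceptional curve $C\subset X_{\C}$. All the content lies in the second condition, and my plan is to reduce it to the classical fact that a globally generated line bundle on a complete irreducible curve has non-negative degree.

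\textbf{Step 1 (geometry of $C$).} Let $U\subset X_{\C}$ be an open neighborhood of the exceptional fiber of $\sigma_{\C}$ on which $\OO_{X_{\C}}(-D)$ is generated by its global sections; such a $U$ exists by the definition of $\sigma_{\C}$-generation used in Proposition 2.2. Since $C$ is an irreducible effective exceptional $1$-cycle, $\sigma_{\C}(C)$ is a point, so $C$ is contained in a fiber of $\sigma_{\C}$. That fiber is proper because $\sigma_{\C}$ is proper, and it is contained in the exceptional fiber, hence in $U$. Therefore $C$ is a complete irreducible curve sitting inside the open set~$U$.

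\textbf{Step 2 (restriction preserves global generation).} Set $L:=\OO_{X_{\C}}(-D)|_C$. The restriction map sends global sections of $\OO_{X_{\C}}(-D)$ on $U$ to sections of $L$ on $C$. Because the former generate $\OO_{X_{\C}}(-D)$ at every point of $U$, and $C\subset U$, their images generate $L$ at every point of~$C$. Hence $L$ is a globally generated line bundle on the complete irreducible curve~$C$.

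\textbf{Step 3 (degree non-negative).} Choose a section $s\in H^{0}(C,L)$ that does not vanish identically on $C$, which exists since $L$ is globally generated. Its zero locus is an effective Cartier divisor on $C$ of degree $\deg_{C}L$. Since by definition $(-D)\cdot C=\deg_{C}L$, we obtain $(-D)\cdot C\ge 0$, completing the verification.

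\textbf{Obstacle.} There is essentially no substantive obstacle: the argument is a direct application of $\sigma_{\C}$-generation. The only point requiring care is noting that global generation on the open set $U$ transfers to the restricted line bundle $L$ on $C$, which is immediate once one observes that $C$ lies entirely inside $U$ by properness of $\sigma_{\C}$.
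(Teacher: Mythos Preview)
Your argument is correct: it is exactly the standard ``globally generated implies relatively nef'' argument, and there is no gap. The paper itself does not supply a proof of this proposition but simply cites it from \cite[Prop.~1.22]{CGL}; the reasoning you give (restrict the $\sigma_{\C}$-generated line bundle to a complete exceptional curve and use that a globally generated line bundle on a complete curve has non-negative degree) is precisely the expected one and is essentially what underlies the cited result.
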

If the dimension $d$ of $X$ equals 2, then the effective exceptional
irreducible 1-cycles of $X_{\C}$ are the strict transforms of the
exceptional divisors. Then, a divisor $-D=-\sum_{i=1}^n m_i E_i^*$
is $\sigma_{\C}$-nef if and only if $-D\cdot
E_i=m_i-\sum_{j\rightarrow i} m_j \geq 0$ for $0\leq i\leq n$. This
inequalities are classically known as {\it proximity inequalities}
(see \cite{EC}, Chap. II, book 4). But, in this case, it is also
known that if $-D$ is $\sigma_{\C}$-nef then it is
$\sigma_{\C}$-generated \cite{EC,L1,Ca,L3}. Therefore, in dimension
2, the inclusion given in the statement of Proposition \ref{prop:23}
is an equality. Hence, we have a satisfactory description of the
idealistic clusters: a cluster is idealistic if and only if its
weights satisfy the proximity inequalities.
If $d>2$ the inclusion given in Proposition \ref{prop:23} need not
be an equality. This fact is shown in the following example, taken
from \cite{CGL}:
\begin{ex}
{\rm Let $X$ be a 3-dimensional non singular variety and let
$\C=\{Q_0,\ldots,Q_9\}$ a constellation consisting of a closed point
$Q_0\in X$ and nine points $Q_1,\ldots Q_9$ in general position on a
non singular cubic curve $C_0$ in the exceptional divisor $B_0$
(i.e. such that $C_0$ is the unique cubic curve in $B_0$ passing
through the nine points). Consider the divisor on $X_{\C}$ given by
$D=3E^*_0+\sum_{i=1}^9 E^*_i$. $-D$ is $\sigma_{\C}$-nef because, if
$C$ is any curve in $B_0$, the inequality $3\deg(C)-\sum_{i=1}^9
e_{Q_i}(C)\geq 0$ ($e_{Q_i}(C)$ denoting the multiplicity of $C$ at
$Q_i$) is obvious if $C=C_0$ and it follows from B\'ezout's theorem
otherwise. However, $-D$ is not $\sigma_{\C}$-generated because, if
otherwise, $C_0$ should be a fixed curve of the finitely supported
ideal $I$ such that $I\OO_{X_{\C}}=\OO_{X_{\C}}(-D)$. }
\end{ex}
However, the semigroup $\mathbb{E}_{\C}^{\sharp}$ has the property
to be closed under adjoints. In fact, if $K_{X_{\C}/X}$ is the
relative canonical divisor of the morphism $\sigma_{\C}$ and
$\K=(\C,\underline{m})$ is an idealistic cluster, then the adjoint
ideal $J_{\K}:={\sigma_{\C}}_* \OO_{X_{\C}}(-D(\K)+K_{X_{\C}/X})$
of the ideal $I_{\K}$ is again a finitely supported on $\C$
complete ideal, namely the one associated to the cluster with
weights $\max(0,m_i-d+1)$ for any $i$. This statement is due to
Lipman. Hence, one deduces the following result:
\begin{prop}\cite[Th. 3.3]{L4}\label{proplipman}
For a given constellation $\C$ one has that if $\sum_{i=0}^n m_i
E_i^* \in \mathbb{E}_{\C}^{\sharp}$ then $\sum_{i=0}^n
\max(0,m_i-d+1) E_i^* \in \mathbb{E}_{\C}^{\sharp}$.
\end{prop}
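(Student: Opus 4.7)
The plan is to identify the adjoint ideal $J_\K$ with the cluster ideal of weights $\max(0,m_i-d+1)$, and then verify that this cluster is idealistic; this will simultaneously yield the finitely supported, complete character of $J_\K$ and the stated membership $D' \in \mathbb{E}_\C^\sharp$. The first step is the computation of the relative canonical divisor. By induction on the number of points of $\C$, using that the blow-up of a smooth point in dimension $d$ contributes $(d-1)$ times the new exceptional divisor to the relative canonical class and that total transforms of previously created exceptional divisors are preserved under further blow-ups, one obtains
$$
K_{X_\C/X} = (d-1)\sum_{i=0}^n E_i^*,
$$
so that $K_{X_\C/X} - D(\K) = -\sum_i(m_i-d+1)E_i^*$.

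The second step is a truncation identifying $J_\K$ with $\sigma_{\C*}\OO_{X_\C}(-D')$, where $D' := \sum_i \max(0,m_i-d+1)E_i^*$. A section of $\sigma_{\C*}\OO_{X_\C}(G)$ near $Q_0$ is a rational function $f$, regular off $Q_0$, whose pullback satisfies $\mathrm{ord}_{E_i^*}(\sigma_\C^* f) \geq -\mathrm{coeff}_i(G)$ for each $i$. Since $f$ is regular near $Q_0$, its pullback has non-negative order along every exceptional divisor, so the condition is vacuous whenever $\mathrm{coeff}_i(G) \geq 0$. Truncating to zero the non-negative coefficients of $K_{X_\C/X}-D(\K)$, namely those indexed by $i$ with $m_i \leq d-1$, therefore leaves the direct image unchanged, giving $J_\K = \sigma_{\C*}\OO_{X_\C}(-D')$.

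The decisive third step is to show that $-D'$ is $\sigma_\C$-generated, i.e., $D' \in \mathbb{E}_\C^\sharp$ (the degenerate case $D' = 0$, which occurs only when all $m_i \leq d-1$, corresponds to the adjoint being the unit ideal and is consistent with the statement). Since $D(\K) \in \mathbb{E}_\C^\sharp$, Proposition \ref{prop:23} yields that $-D(\K)$ is $\sigma_\C$-nef. Relative Grauert--Riemenschneider vanishing applied to the smooth proper birational morphism $\sigma_\C$ with the $\sigma_\C$-nef line bundle $\OO_{X_\C}(-D(\K))$ then gives
$$
R^q\sigma_{\C*}\OO_{X_\C}(K_{X_\C/X}-D(\K)) = 0 \quad (q>0),
$$
and combining this with the standard comparison between $\sigma_\C^* J_\K$ and $\OO_{X_\C}(-D')$ forces the canonical map $\sigma_\C^* J_\K \to \OO_{X_\C}(-D')$ to be surjective near the exceptional fiber, which is the sought-after $\sigma_\C$-generation. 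I expect this step to be the main obstacle: Proposition \ref{prop:23} only delivers $\sigma_\C$-nefness of $-D(\K)$, and the example preceding the statement shows that nefness does not imply $\sigma_\C$-generation in dimension $d \geq 3$. The twist by $K_{X_\C/X}$ built into the definition of the adjoint is precisely what places the relevant line bundle in the range where a vanishing theorem recovers the generation property that can fail for $D(\K)$ itself.
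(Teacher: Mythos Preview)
The paper gives no proof of its own: the paragraph preceding the proposition simply identifies the adjoint ideal $J_{\K}=\sigma_{\C*}\OO_{X_{\C}}(K_{X_{\C}/X}-D(\K))$ with the cluster ideal of weights $\max(0,m_i-d+1)$, declares this ``due to Lipman'', and cites \cite[Th.~3.3]{L4}. Your three steps (computation of $K_{X_\C/X}=(d-1)\sum_i E_i^*$, truncation of non-negative coefficients to obtain $J_\K=\sigma_{\C*}\OO_{X_\C}(-D')$, and an appeal to relative Grauert--Riemenschneider/Kawamata--Viehweg vanishing for the $\sigma_\C$-nef line bundle $\OO_{X_\C}(-D(\K))$) are exactly the skeleton of Lipman's argument in that reference, so your approach coincides with what the paper invokes.

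One caution on Step~3. The implication ``$R^q\sigma_{\C*}\OO_{X_\C}(-D')=0$ for $q>0$'' $\Rightarrow$ ``$\OO_{X_\C}(-D')$ is $\sigma_\C$-generated'' is not a formal consequence of vanishing alone, and your phrase ``standard comparison'' conceals the actual work. In \cite{L4} this step is carried out by factoring $\sigma_\C$ through the intermediate blow-ups and using the vanishing inductively along the sequence, together with the fact that on a single blow-up of a regular point the relevant sheaves are explicitly controlled. You have rightly flagged this as the crux, and your diagnosis (that the twist by $K_{X_\C/X}$ is what places the sheaf in the range where vanishing recovers generation, in contrast with the example preceding the proposition) is exactly the point; but a reader would want to see the inductive mechanism, or a precise reference to the relevant lemma in \cite{L4}, rather than an appeal to an unspecified comparison.
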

Moreover, a given divisor $\sum_{i=0}^n m'_i E_i^*$ is the
associated divisor to the adjoint of some finitely supported ideal
if and only if $\sum_{i=0}^n (m'_i+d-1) E_i^* \in
\mathbb{E}_{\C}^{\sharp}$. This follows from the definition and
above result. For $d=2$ this fact was proved in \cite[Th.
1]{hyry}.
\begin{defn}
{\rm A finitely supported complete ideal $I$ of a local ring
$\OO_{X,Q_0}$ is said to be $*$-{\it simple} if it cannot be
factorized as $*$-product of two proper ideals of $\OO_{X,Q_0}$ or,
equivalently,  $I$ is not the $*$-product of two proper ideals
belonging to ${\J}_{\C}$, whenever $I\in \J_{\C}$ for a
constellation $\C$. }
\end{defn}
\begin{rem}
{\rm
If $d=2$ the product of complete ideals is a complete ideal and,
hence, the operation $*$ coincides with the usual product of ideals;
in this case, the $*$-simple complete ideals are called {\it simple}
complete ideals.
 }
\end{rem}
In \cite{L2} Lipman associates, to each point $Q_j$ of a
constellation $\C=\{Q_0,\ldots,Q_n\}$, the unique finitely supported
complete $*$-simple ideal $\PP_{Q_j}$ of $R_{Q_0}$ whose cluster of
base points $\K_{\PP_{Q_j}}=(\C_{\PP_{Q_j}},\underline{m})$
satisfies the conditions: $\C_{\PP_{Q_j}}=\C^{Q_j}$, the weight of
$Q_j$ equals 1 and the weight sequence $\underline{m}$ is minimal
for the reverse lexicographical ordering in $(\mathbb{Z}_{\geq
0})^{\ell+1}$, where $\ell$ is the level of $Q_i$. For simplicity of
notation, we shall denote by $D(Q_j)$ the divisor on $X_{\C}$ given
by $D(\K_{\PP_{Q_j}})$, that is:
$$D(Q_j):=\sum_{Q_i\leq Q_j} m_{ij} E_i^*,$$
where $m_{ij}$ is the virtual multiplicity of $Q_i$ in the cluster
$\K_{\PP_{Q_j}}$. Since $m_{jj}=1$ for all $j$, one has that the set
$(D(Q_0),\ldots, D(Q_n))$ is a basis of $N^1$ and the basis change
matrix from $(D(Q_i))$ to $(E_i^*)$ is the matrix
$\mathbf{M}_{\C}:=((m_{ij}))$.
As a consequence of this fact and Proposition \ref{prop:2} we get
the Lipman's unique factorization theorem (see \cite{L2}):
\begin{thm}
Given a constellation $\C=\{Q_0,\ldots,Q_n\}$, for each $I\in
{\mathcal J}_{\C}$ we can write {\it formally}, in a unique form,
the ideal $I$ as $*$-product of the $*$-simple ideals ${\mathcal
P}_{Q_i}$ associated with the points in $\C$:
\begin{equation}\label{fact}
I=\prod_{0\leq i \leq n}^* {\mathcal P}_{Q_i}^{r_i}
\end{equation}
with $r_i\in \bZ$ for all $i=0,\ldots,n$. Moreover, the vector
$\underline{r}=(r_1,\ldots,r_n)$ can be computed as
$\underline{r}^t=\mathbf{M}_{\C}^{-1} \underline{m}^t$, where
$\K_{I}=(\C,\underline{m})$.
\end{thm}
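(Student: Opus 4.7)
The plan is to transport the factorization problem from the semigroup $(\J_\C, *)$ to the additive lattice $N^1$ via Proposition \ref{prop:2}, and then appeal to the fact, already established just before the theorem, that the divisors $D(Q_0),\ldots,D(Q_n)$ form a $\bZ$-basis of $N^1$ with basis-change matrix $\mathbf{M}_\C$. Under the isomorphism $\K \mapsto D(\K)$, the ideal $I \in \J_\C$ corresponding to the cluster $\K_I=(\C,\underline m)$ is sent to $D(\K_I)=\sum_{i=0}^n m_i E_i^*$, while each $*$-simple ideal $\PP_{Q_j}$ is sent to $D(Q_j)$. To accommodate the formal product with possibly negative exponents, I would pass to Grothendieck groups: since $\mathbb{E}_\C^{\sharp}\subseteq N^1$ and contains the basis $(D(Q_i))$, the Grothendieck group of $(\mathbb{E}_\C^{\sharp},+)$ is canonically all of $N^1$. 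Correspondingly the Grothendieck group of $(\J_\C,*)$ becomes the free abelian group on $\{\PP_{Q_i}\}_{i=0}^n$, in which the formal expression $\prod_{i=0}^n{}^* \PP_{Q_i}^{r_i}$ (with $r_i\in\bZ$) is by definition the element corresponding to $\sum_{i=0}^n r_i D(Q_i)$.

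Once this set-up is in place, existence and uniqueness of the factorization are immediate: since $(D(Q_i))$ is a $\bZ$-basis of $N^1$, there exist unique integers $r_0,\ldots,r_n$ with $D(\K_I)=\sum_i r_i D(Q_i)$. To obtain the explicit formula $\underline r^t=\mathbf{M}_\C^{-1}\underline m^t$, I would substitute $D(Q_j)=\sum_i m_{ij} E_i^*$ (the defining relation of $\mathbf{M}_\C$) and equate coefficients on the basis $(E_i^*)$, getting $m_i=\sum_j m_{ij} r_j$, that is $\underline m^t=\mathbf{M}_\C \underline r^t$. The matrix $\mathbf{M}_\C$ is unimodular: after ordering the points by non-decreasing level (so that $Q_i\le Q_j$ forces $i\le j$), the relation $m_{ij}\ne 0 \Rightarrow Q_i\le Q_j$ makes it upper triangular with $m_{jj}=1$ on the diagonal, so $\mathbf{M}_\C^{-1}$ has integer entries and the $r_i$ are indeed integers.

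The only subtle point is really conceptual, namely clarifying the meaning of the formal $*$-product with negative exponents; once it is interpreted as an element of the Grothendieck group of $(\J_\C,*)$, the theorem reduces to elementary linear algebra over $\bZ$. The substantive work—construction of the $*$-simple ideals $\PP_{Q_j}$ via the reverse lexicographic minimality condition so as to produce a $\bZ$-basis of $N^1$, together with Proposition \ref{prop:2} providing the semigroup isomorphism $(\J_\C,*)\cong(\mathbb{E}_\C^{\sharp},+)$—is precisely what was set up in the preceding paragraphs, so no further geometric input is needed.
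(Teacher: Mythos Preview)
Your proposal is correct and follows exactly the approach indicated in the paper: the text states the theorem ``as a consequence of this fact and Proposition~\ref{prop:2}'', where ``this fact'' is precisely that $(D(Q_0),\ldots,D(Q_n))$ is a $\bZ$-basis of $N^1$ with change-of-basis matrix $\mathbf{M}_\C$, and you have filled in those details faithfully. Your Grothendieck-group reading of the formal $*$-product with negative exponents matches the paper's own clarification in the Remark immediately after the theorem (rewriting $\prod^* \PP_{Q_i}^{r_i}$ as an equality between two genuine $*$-products after moving negative powers to the other side).
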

\begin{rem}
{\rm Notice that, in the statement above, $r_i=0$ if $Q_i\not\in
\C_{I}$. Moreover, the expression (\ref{fact}) (with non necessarily
positive exponents) means that there exists a $*$-product of $I$
times ideals ${\mathcal P}_{Q_i}$ which is equal to a $*$-product of
ideals ${\mathcal P}_{Q_j}$, which distinct factors in both sides of
the equality.
 }
\end{rem}
If $d=2$ the situation is very simple because of Zariski's theory of
complete ideals (see \cite{Z} and \cite{ZS}). In this case, there
exists unique factorization of complete ideals as product of simple
complete ideals. Moreover, the exponents $r_i$ are non-negative.
Lipman, in \cite{L3}, provides a modern presentation of Zariski's
results. The matrix $\mathbf{M}_{\C}$, in this case, coincides with
the inverse of the transpose of the proximity matrix $\mathbf{P}_{\C}$
and $(D(Q_i))$ is the dual $\bZ$-basis of $(-E_i)$ with respect to the
bilinear pairing $N^1\times N^1\rightarrow \bZ$ given by the
intersection product.
The sub-semigroup ${\mathcal L}_{\C}$ of $\J_{\C}$ of those ideals
which are $*$-products of the ideals ${\mathcal P}_{Q_i}$ with
non-negative exponents is nothing but the free semigroup generated
by the ${\mathcal P}_{Q_i}$. By the isomorphisms in Proposition
\ref{prop:2}, it corresponds to the sub-semigroup ${\mathcal
L}{\mathcal G}_{\C}$ of ${\mathcal G}_{\C}$ generated by the
clusters $\K_{{\mathcal P}_{Q_i}}$ and to the sub-semigroup
$\mathbb{L}_{\C}$ of $\mathbb{E}_{\C}^{\sharp}$ generated by the
divisors $D(Q_i)$, $0\leq i\leq n$.
\subsection{Idealistic clusters and embedded
resolutions}\label{sect:23}
The objective of this section is, on the one hand, to define several
concepts whose aim is to describe how an effective divisor in $X$
passes through the infinitely near points involved by a cluster and,
on the other hand, to state a result showing that, if the
characteristic of the ground field $\bK$ is 0, then the morphism
$\sigma_{\C}$ associated to the constellation of base points of a
finitely supported ideal $I$ can be seen as the embedded resolution
of a subvariety defined by {\it general enough} elements of $I$. The
above mentioned concepts will help us to precise the meaning of {\it
general enough}. Recall that a projective birational morphism
$\pi:Z\rightarrow Y$ is an {\it embedded resolution} of a reduced
subvariety $V$ of $Y$ having an isolated singularity at $Q_0\in X$
if $Z$ is non singular, $\pi$ induces an isomorphism of $Z\setminus
\pi^{-1}(Q_0)$ to $X\setminus \{Q_0\}$ and $\pi^{-1}(V)$ is a normal
crossing subscheme.
Fix $\C=\{Q_0,\ldots,Q_n\}$ a constellation over $X$ with origin at
$Q_0$ and set $S:={\rm Spec}(\OO_{X,Q_0})$ and
$S_{\C}:=X_{\C}\times_X S$. We shall denote also by $\sigma_{\C}$ to
the induced morphism $S_{\C}\rightarrow S$. The constellation $\C$
can be naturally regarded as a constellation over $S$ with origin at
its closed point $Q_0$ and $\sigma_{\C}=\sigma_0\circ \cdots
\sigma_{n}:S_{\C}=S_{n+1}\rightarrow \cdots \rightarrow
S_1\rightarrow S_0=S$ being its associated composition of
blowing-ups.
\begin{defn}\label{proper}
{\rm
Let $\K=(\C,\underline{m})$ be a cluster, with $\C$ as above. Let
$D$ be an effective divisor on $S$.
\begin{itemize}
\item[(a)] For $1\leq i\leq n$, the divisor on $S_{i}$ given by $
\check{D}_i:= (\sigma_0\circ \cdots \circ
\sigma_{i-1})^*D-\sum_{j=0}^{i-1} m_i E_i^*$ is called the {\it
virtual transform} of $D$ on $S_{i}$ with respect to the cluster
$\K$. The virtual transform of $D$ on $S$, $\check{D}_0$, will be
considered to be $D$. \item[(b)] $D$ is said to {\it pass} (resp.
to {\it pass effectively}) (resp. to {\it pass properly}) through
$\K$ if for any $J=\{i_1<\cdots <i_k\}$ with $k=1$ (resp. $k=1$)
(resp. $1\leq k\leq d$) such that $E_{J}:=E_{i_1}\cap\cdots \cap
E_{i_k}\subseteq S_{\C}$ is not empty, the multiplicity at
$Q_{i_k}$ of the inverse image ${D}_{J}$ of $\check{D}_{i_k}$ on
$E_{i_1}\cap\cdots \cap E_{i_{k-1}}\subseteq S_{i_k}$ (or
$S_{i_k}$ if $k=1$) is $\geq$ (resp. $=$) (resp. $=$)
$m_{J}:=m_{i_k}$.
\end{itemize}
}
\end{defn}
If $D$ and $\K$ are as above and $D$ passes properly with respect to
$\K$, we denote the projective tangent cone to ${D}_{J}$ at
$Q_{i_k}$ by $TC(D)_{J}$. This is a hypersurface of degree $m_{J}$
in $B_{J}:=E_{i_1}\cap \cdots \cap B_{i_k}\cong {\bP}^{d-k}$.
\begin{prop}\cite[Prop. 3.4]{CGL}\label{prop:24}
With the notations of Definition \ref{proper}, the map which takes
$D$ to $\check{D}_{n+1}$ (the virtual transform on $S_{n+1}=S_{\C}$)
is a one to one correspondence between the set of effective divisors
in $S$ which pass through $\K$ and the complete linear system
$|-D(\K)|$ on $S_{\C}$. Moreover, for any effective divisor $D$ in
$S$:
\begin{itemize}
\item[(a)] $D$ passes effectively
through $\K$ if and only if, for any $Q_i\in \C$, the multiplicity
of the strict (or proper) transform of $D$ at $Q_i$ is $m_i$.
\item[(b)] If $D$ passes properly through $\K$ then, for any $J$ as
in Definition \ref{proper}:
\begin{itemize}
\item[(i)] the subvariety $E_{J}$ on $S_{\C}$ is not contained in
the strict  transform $\tilde{D}$ of $D$,
\item[(ii)] for $1\leq k<d-1$, the scheme $E_{J}\cap \tilde{D}$ is
the strict transform by $\sigma_{J}:E_{J}\rightarrow B_{J}$ of
$TC(D)_{J}$ and for any $i\rightarrow J$ (i.e. $i\rightarrow
i_{\ell}$, $1\leq \ell \leq k$), the multiplicity at $Q_i$ of the
strict transform of $TC(D)_{J}$ is $m_i$.
\end{itemize}
\end{itemize}
\end{prop}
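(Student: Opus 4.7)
The plan is to establish the bijection by translating effectivity of $\check D_{n+1}$ into the passing-through-$\K$ inequalities and invoking Proposition~\ref{prop:2} for surjectivity; then to derive (a) by induction on $i$ from the recursive description of the virtual transform; and finally to obtain (b) by induction on the codimension $k$ of the stratum $E_J$. Since $S = \spec \OO_{X, Q_0}$ is local, every effective divisor $D$ on $S$ is the principal divisor of some $f \in \OO_{X, Q_0}$, so $\sigma_\C^* D$ is principal on $S_\C$ and hence linearly trivial. Writing $\sigma_\C^* D = \tilde D + \sum_i e_i^* E_i^*$ in the total-transform basis, the coefficient $e_i^*$ is precisely $\mult_{Q_i}(\check D_i)$, so passing through $\K$ amounts to $e_i^* \ge m_i$ for every $i$ with $E_i$ non-empty in $S_\C$, which in turn is equivalent to effectivity of
\[
\check D_{n+1} = \sigma_\C^* D - D(\K) = \tilde D + \sum_i (e_i^* - m_i) E_i^*,
\]
a divisor in $|-D(\K)|$. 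For surjectivity, Proposition~\ref{prop:2} identifies $H^0(S_\C, \OO_{S_\C}(-D(\K)))$ with the ideal $I_\K \subset \OO_{X, Q_0}$; any $E \in |-D(\K)|$ arises from a defining section whose image is some $f \in I_\K$, and $D := \div(f)$ recovers a preimage with $\check D_{n+1} = E$.

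For part (a), the recursion $\check D_{i+1} = \widetilde{\check D_i} + (\mult_{Q_i}(\check D_i) - m_i) B_i$, where $\widetilde{\,\cdot\,}$ denotes strict transform under $\sigma_i$, yields by induction on $i$ that the equalities $\mult_{Q_i}(\check D_i) = m_i$ for every $i$ are equivalent to $\check D_i$ coinciding with the strict transform $\tilde D_i$ of $D$ on $S_i$ for every $i$, hence to $\mult_{Q_i}(\tilde D_i) = m_i$ for every $i$. This is exactly the content of (a).

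For part (b), I would argue by induction on $k$. The base case $k=1$ follows from (a) together with the definition of the tangent cone: once $\mult_{Q_i}(\tilde D_i) = m_i$, the intersection of $\tilde D$ with $B_i$ in $S_{i+1}$ is precisely the projectivized tangent cone $TC(D)_{\{i\}}$, a hypersurface of degree $m_i$ in $B_i \simeq \bP^{d-1}$, and the subsequent blowups producing $E_i \subset S_\C$ transport this identification to $E_i \cap \tilde D$ through the restricted morphism $\sigma_{\{i\}}: E_i \to B_i$. For the inductive step at $J = \{i_1 < \cdots < i_k\}$, the normal-crossing structure makes $E_{i_1} \cap \cdots \cap E_{i_{k-1}}$ smooth of dimension $d - k + 1$ near $Q_{i_k}$, and the proper-passing hypothesis forces the restriction $D_J$ of $\check D_{i_k}$ there to have multiplicity exactly $m_J$ at $Q_{i_k}$. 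Hence $D_J$ cannot contain $E_{i_1} \cap \cdots \cap E_{i_{k-1}}$, which gives (i), and its projectivized tangent cone $TC(D)_J$ is a genuine hypersurface of degree $m_J$ in $B_J \simeq \bP^{d-k}$; applying (a) to $D_J$ on this smooth ambient then yields (ii). The main obstacle is (b)(ii): one must check that $TC(D)_J$ transforms functorially under the entire sequence of blowups used to produce $E_J$ from $B_J$, and that the multiplicities at every infinitely near point proximate to some $Q_{i_\ell} \in J$ match $m_i$. This is precisely where the stronger ``$=$'' in proper passing (imposed at every intermediate multi-index, not only at $J$) combines with the normal-crossing structure to reduce the codimension-$k$ computations on $S_{i_k}$ to lower-codimension computations on the smooth ambient $E_{i_1} \cap \cdots \cap E_{i_{k-1}}$.
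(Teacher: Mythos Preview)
The paper does not supply its own proof of this proposition; it is quoted verbatim from \cite[Prop.~3.4]{CGL}. So there is no in-paper argument to compare against, and I assess your proposal on its own merits.

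Your argument for the bijection contains a genuine error. You write $\sigma_\C^* D = \tilde D + \sum_i e_i^* E_i^*$ and assert that $e_i^* = \mult_{Q_i}(\check D_i)$. This is false: the coefficient of $E_i^*$ in the total pullback is the multiplicity at $Q_i$ of the \emph{strict} transform, not of the virtual transform. For instance, take $\C=\{Q_0,Q_1\}$ with $Q_1\rightarrow Q_0$, $\mult_{Q_0}(D)=3$, $\mult_{Q_1}(\tilde D_1)=1$, and weights $m_0=1$, $m_1=2$. Then $e_1^*=1$, whereas $\mult_{Q_1}(\check D_1)=1+(3-1)=3$. Your criterion ``$e_i^*\ge m_i$ for all $i$'' is therefore neither the passing-through condition nor the effectivity condition for $\check D_{n+1}$: here $e_1^*=1<2=m_1$, yet $\check D_2=\tilde D+2E_0^*-E_1^*=\tilde D+2E_0+E_1$ is effective and $D$ does pass through $\K$. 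A second error compounds the first: effectivity of $\tilde D+\sum_i (e_i^*-m_i)E_i^*$ must be read off in the strict basis $(E_i)$, not in the $(E_i^*)$ basis, since a divisor with a negative $E_i^*$-coefficient can still be effective. The correct route is the step-by-step induction you in fact use for part~(a): from $\check D_{i+1}=\sigma_i^*\check D_i-m_iB_i$ one sees that $\check D_{i+1}$ is effective on $S_{i+1}$ if and only if $\check D_i$ is effective on $S_i$ and $\mult_{Q_i}(\check D_i)\ge m_i$; unwinding gives exactly the passing-through inequalities, and the inverse map $E\mapsto (\sigma_\C)_*E$ (pushforward of cycles) shows bijectivity without appealing to Proposition~\ref{prop:2} or to idealisticity of $\K$.

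Your treatment of (a) via the recursion $\check D_{i+1}=\widetilde{\check D_i}+(\mult_{Q_i}(\check D_i)-m_i)B_i$ is correct, and your outline for (b) by induction on $k$, restricting to the smooth normal-crossing stratum $E_{i_1}\cap\cdots\cap E_{i_{k-1}}$ and replaying (a) there, is the standard mechanism used in \cite{CGL}. The point you flag as the ``main obstacle'' in (b)(ii) is real: it is precisely the place where the equalities in the proper-passing hypothesis at \emph{every} intermediate $J$ are consumed, and your sketch identifies this correctly even if the details are left to the reader.
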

Given an element $f\in R_{Q_0}=\OO_{X,Q_0}$ we denote by $H_f$ the
hypersurface in $S$ defined by $f$.
\begin{defn}
{\rm A $r$-uple $(f_1,\ldots,f_r)$ of elements in $R_{Q_0}$ with
$1\leq r<d$ is said to be {\it non degenerated} with respect to a
cluster $\K=(\C,\underline{m})$ if the hypersurfaces
$H_{f_1},\ldots,H_{f_r}$ pass properly through $\K$ and, for any $J$
such that $\dim E_{J}\geq 1$, the hypersurfaces
$\{TC(H_{f_i})_J\}_{i=1}^j$ of $B_J$ intersect transversally except
maybe at proper points of $B_J$ in $\C$. }
\end{defn}
\begin{prop}\cite[Prop. 3.6]{CGL} If $(f_1,\ldots,f_r)$ is non
degenerated with respect to $\K$, then
$\sigma_{\C}:S_{\C}\rightarrow S$ is an embedded resolution of the
subvariety of $S$ defined by $f_1,\ldots,f_r$.
\end{prop}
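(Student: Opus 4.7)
The plan is to verify the three defining conditions of an embedded resolution. First, $S_\C$ is non-singular, since it is obtained from the smooth germ $S$ by a finite sequence of blow-ups along smooth closed centres. Second, $\sigma_\C$ restricts to an isomorphism over $S\setminus\{Q_0\}$, because every blow-up in the chain has centre lying over $Q_0$. It remains to verify that $\sigma_\C^{-1}(V)$ is a normal crossing subscheme of $S_\C$, where $V$ denotes the subvariety of $S$ defined by $f_1,\ldots,f_r$.

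The verification is purely local. Off the exceptional locus $V$ is smooth and $\sigma_\C$ is an isomorphism, so the condition is automatic there. Let $P\in S_\C$ be a closed point in the exceptional locus, let $J=\{i_1<\cdots<i_k\}$ be the set of indices with $P\in E_{i_j}$, and choose local coordinates $x_1,\ldots,x_k,y_1,\ldots,y_{d-k}$ at $P$ with $E_{i_j}=\{x_j=0\}$. The first key step is to produce, for every $\ell$, a local factorization
\[
f_\ell\circ\sigma_\C \;=\; x_1^{a_{\ell,1}}\cdots x_k^{a_{\ell,k}}\,\bar f_\ell,
\]
where $\bar f_\ell=0$ is a local equation for the strict transform $\tilde H_{f_\ell}$ and the exponents $a_{\ell,j}$ are the virtual multiplicities prescribed by the cluster $\K$ at the points $Q_{i_j}$. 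The factorisation itself is the iterated virtual-transform formula of Definition~\ref{proper}(a), and the identification of the exceptional exponents with exactly the prescribed weights uses the proper-passage hypothesis together with Proposition~\ref{prop:24}(a), which says that the multiplicity of the strict transform equals the prescribed $m_i$ at each infinitely near point.

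With this normal form in hand, the heart of the argument is to establish, at every stratum $E_J$ through $P$, three transversalities: (i) each $\bar f_\ell=0$ meets $E_J$ transversally at $P$; (ii) the hypersurfaces $\bar f_1=0,\ldots,\bar f_r=0$ meet transversally at $P$; (iii) together with the coordinate hyperplanes $\{x_j=0\}$ they form a normal crossing configuration. Condition (i) is a combination of Proposition~\ref{prop:24}(b)(i), which ensures $E_J\not\subseteq\tilde H_{f_\ell}$, with (b)(ii), which identifies $\tilde H_{f_\ell}\cap E_J$ with the strict transform under $\sigma_J:E_J\to B_J$ of the projective tangent cone $TC(H_{f_\ell})_J$. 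Conditions (ii) and (iii) then reduce to mutual transversality of the family $\{TC(H_{f_\ell})_J\}_\ell$ inside $B_J$, which is precisely the non-degeneracy hypothesis, \emph{except possibly at proper points of $B_J$ lying in $\C$}. Those exceptional loci are exactly the points that will be blown up at later stages, so the argument closes by descending induction on the level of points of $\C$ lying over $P$: at a point of maximal level no further blow-up occurs, so non-degeneracy holds unconditionally and the base case is immediate, while the inductive step is supplied again by Proposition~\ref{prop:24}(b)(ii) applied to the larger index set $J'$ arising after one further blow-up.

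The main obstacle I anticipate is bookkeeping. One must match the exceptional exponents $a_{\ell,j}$ with the cluster weights through the successive virtual-transform formulas, and one must track how the clause ``except maybe at proper points of $B_J$ in $\C$'' in the definition of non-degenerate propagates coherently through each further blow-up, so that the descending induction on level closes without gaps. Once these two pieces of bookkeeping are aligned, the local monomial-times-regular-system form for the ideal $(f_1,\ldots,f_r)\OO_{S_\C}$ at every point of $S_\C$ establishes that $\sigma_\C^{-1}(V)$ has normal crossings, completing the proof.
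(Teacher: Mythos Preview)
The paper does not give its own proof of this proposition: it is stated with a bare citation to \cite[Prop.~3.6]{CGL} and is then used as a black box in the proof of the theorem that follows. So there is no argument in the present paper to compare your proposal against.

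Your outline is along the right lines and uses the correct ingredients (Proposition~\ref{prop:24} parts (a) and (b), together with the transversality clause in the definition of non-degenerate). One point of phrasing is off: you speak of ``descending induction on the level of points of $\C$ lying over $P$'', but $P$ is a closed point of $S_\C$ and the points of $\C$ live in the intermediate varieties $S_i$, so they do not lie over $P$. What you actually want is an induction on $|J|$, the number of exceptional components through $P$: if the tangent cones fail to be transverse at some proper point of $B_J$ belonging to $\C$, that point gets blown up, and on $S_\C$ the corresponding locus lies in a deeper stratum $E_{J'}$ with $|J'|>|J|$, where Proposition~\ref{prop:24}(b)(ii) and the non-degeneracy hypothesis for $J'$ apply. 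Apart from this bookkeeping correction, the structure you describe is sound.
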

\begin{thm}
If the characteristic of the ground field $\bK$ is 0, $I$ is a
finitely supported ideal of $R_{Q_0}$ and $\C$ is its constellation
of base points $\C_{I}$, then the above morphism
$\sigma_{\C}:S_{\C}\rightarrow S$ is an embedded resolution of the
subvariety of $S$ defined by $r$, $1\leq r<d=\dim X$, general
elements in $I$.
\end{thm}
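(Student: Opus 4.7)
The plan is to reduce the statement to the preceding proposition (that $\sigma_\C$ embedded-resolves the vanishing locus of any $r$-tuple non-degenerated with respect to $\K_I$), and then show that in characteristic zero the non-degeneracy condition is satisfied by a Zariski-dense set of $r$-tuples $(f_1,\ldots,f_r)\in I^r$. So the game is entirely to verify the open/dense nature of the non-degeneracy locus.

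First I would identify the parameter space. By Proposition \ref{prop:2}, $I = I_{\K_I}$ is the stalk at $Q_0$ of ${\sigma_\C}_*\OO_{X_\C}(-D(\K_I))$, so elements of $I$ (modulo those vanishing on a bigger divisor) are in bijection with global sections of $\OO_{X_\C}(-D(\K_I))$ on a neighbourhood of the exceptional fiber; by Proposition \ref{prop:24}(a), choosing $f\in I$ of order exactly $m_0=\mathrm{ord}_{Q_0}(I)$ corresponds to picking a divisor in $|-D(\K_I)|$ whose associated hypersurface $H_f$ passes through $\K_I$. Since $\K_I\in \mathbb{E}_\C^{\sharp}$, the sheaf $\OO_{X_\C}(-D(\K_I))$ is $\sigma_\C$-generated near the exceptional fiber, so we really do have a linear system free of base points outside the proper points of $\C$, and the genericity we need will be with respect to this linear system.

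Next I would verify, one condition at a time, that being non-degenerate is the complement of proper closed subsets of $I^r$. Passing effectively through $\K_I$ (the multiplicity at each $Q_i$ of the strict transform is exactly $m_i$, not merely $\geq m_i$) is a generic condition: for each $Q_i$, the subspace of sections whose weak transform at $Q_i$ has higher order than $m_i$ is a proper linear subspace, by definition of $m_i$ as $\mathrm{ord}_{Q_i}(I_{Q_i})$. Passing properly, which requires that for each stratum $E_J=E_{i_1}\cap\cdots\cap E_{i_k}$ (of codimension $k\leq d$) the inverse image of the virtual transform has the correct multiplicity $m_{i_k}$ at $Q_{i_k}$, follows by inductive application of the same argument on $B_{i_k}$ and on the successive intersections $B_J$, again using that the restricted linear systems are non-trivial.

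The main work, and the only place where characteristic zero is essential, is the transversality of the projective tangent cones $TC(H_{f_i})_J$ on each $B_J\cong \bP^{d-k}$ at non-proper points. Here I would proceed inductively on $r$: having chosen $f_1,\ldots,f_{j}$ non-degenerate, look at the restricted linear system on $B_J$ cut out by sections of $\OO_{X_\C}(-D(\K_I))$; Bertini's theorem in characteristic $0$, applied to this linear system (free of base points away from proper points), shows that a general member is smooth off the fixed locus and meets the already-chosen intersection $\bigcap_{i\leq j}TC(H_{f_i})_J$ transversally outside the proper points of $\C$. Intersecting these countably many non-empty Zariski-open conditions (finitely many strata $J$, finitely many $i$) yields a non-empty open set of non-degenerate $r$-tuples. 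The hard part is precisely this last step: without characteristic zero Bertini one cannot conclude the general tangent cone is reduced and transverse, so the whole argument hinges on applying Bertini repeatedly and controlling the base locus along each stratum $B_J$ using the $\sigma_\C$-generation of $\OO_{X_\C}(-D(\K_I))$.
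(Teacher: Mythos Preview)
Your proposal is correct and follows exactly the paper's approach: reduce to the preceding proposition on non-degenerated $r$-tuples, then invoke that in characteristic $0$ a general $r$-tuple in $I$ is non-degenerated with respect to $\K_I$. The paper simply cites \cite[Prop.~3.8]{CGL} for this genericity fact, whereas you sketch its proof via Bertini and $\sigma_\C$-generation of $\OO_{X_\C}(-D(\K_I))$; your outline of that argument is sound.
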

\begin{proof}
It follows from the preceding proposition and the fact that, since
the characteristic of $\bK$ is 0, a $r$-uple of general elements of
$I$ is non degenerated with respect to $\K_I$ \cite[Prop. 3.8]{CGL}.
\end{proof}
\subsection{Characteristic cones and factorization
properties}\label{sect:24}

As we have already seen, the results in dimension 2 concerning
unique factorization of complete ideals as a product of simple
complete ideals do not extend to higher dimensions. The use of {\it
characteristic cones} provides an interesting framework to study
factorization properties of complete ideals in dimension greater
than 2. The main objective of this section is to provide an overview
of this fact. To begin with, we shall define some convex cones
related to a projective morphism, providing also some basic
properties. Afterwards, we shall consider the particular case in
which such a morphism is the one associated with a constellation.
Let $f: V\rightarrow Y$ be a projective morphism between
algebraic schemes over $\bK$. Denote by $N_1(V/Y)$
(resp. $N^1(V/Y)$) the free abelian group of $1$-dimensional
cycles on $V$ whose support contracts (by $f$) to a closed point
in $Y$ (resp. Cartier divisors on $V$) modulo numerical
equivalence. Recall that a 1-dimensional cycle $C$ (resp. a
Cartier divisor $D$) is numerically equivalent to $0$ iff $D\cdot
C=0$ for all Cartier divisors $D$ (resp. all integral curves $C$
contracted to a closed point of $Y$) on $V$. Intersection theory
provides a $\bZ$-bilinear pairing $N^1(V/Y) \times
N_1(V/Y)\rightarrow \bZ$ which extends to a $\bR$-bilinear pairing
$A^1(V/Y) \times A_1(V/Y)\rightarrow \bR$, where
$A^1(V/Y):=N^1(V/Y)\otimes_{\bZ} \bR$ and
$A_1(V/Y):=N_1(V/Y)\otimes_{\bZ} \bR$. The dimension $\rho(V/Y)$
of $A^1(V/Y)$ is finite and
the above intersection pairing makes $A^1(V/Y)$ and $A_1(V/Y)$
dual vector spaces \cite[Chap. IV, Sect. 4]{K}. For simplicity of notation, given a contracted
effective curve $C$ (resp. a Cartier divisor $D$) on $V$, its
classes in $N_1(V/Y)$ and $A_1(V/Y)$ (resp. $N^1(V/Y)$ and
$A^1(V/Y)$) will also be denoted by $C$ (resp. $D$).
Let $NE(V/Y)$ be the {\it cone of curves} of $V$ relative to $f$,
that is, the convex cone in $A_1(V/Y)$ generated by the classes of
effective contracted curves in $V$. Denote by $P(V/Y)$ the {\it nef
cone} relative to $f$ (also called {\it semiample cone}), that is,
the dual cone of $NE(V/Y)$ or, equivalently, the convex cone in
$A^1(V/Y)$ consisting of vectors $x$ such that $x\cdot C\geq 0$ for
every contracted effective curve in $V$. According to \cite[Chap.
IV, Sect. 4]{K}, the cone $P(V/Y)^o\cup \{0\}$ ($P(V/Y)^o$ being the
topological interior of $P(V/Y)$) is generated by the classes of the
relatively ample divisors $D$ (this means that, for every coherent
sheaf ${\mathcal F}$, the canonical map $f^*f_*{\mathcal F}\otimes
\OO_V(mD)\rightarrow {\mathcal F}\otimes \OO_V(mD)$ is surjective
for all $m$ sufficiently large or, equivalently, $Y$ is covered by
affine subsets $U$ such that the restriction of $D$ to $f^{-1}(U)$
is ample).
The {\it characteristic cone} relative to $f$,
$\tilde{P}(V/Y)$, is defined to be the convex cone of $A^1(V/Y)$
generated by the classes of Cartier divisors $D$ such that the
natural sequence $f^*f_*\OO_{V}(D)\rightarrow \OO_{V}(D)\rightarrow
0$ is exact,
The inclusion $\tilde{P}(V/Y)\subseteq P(V/Y)$ is clear. Moreover,
since some multiple of an ample divisor is generated by global
sections, it follows that $P(V/Y)^o\subseteq \tilde{P}(V/Y)$ and
hence $P(V/Y)^o=\tilde{P}(V/Y)^o$. Notice that, since $f$ is
projective, there exist relatively ample divisors and, therefore,
the dimension of both cones $P(V/Y)$ and $\tilde{P}(V/Y)$ is
$\rho(V/Y)$. When $Y={\rm Spec}(\bK)$, the above defined spaces and
convex cones are denoted by $A_1(V)$, $A^1(V)$, $NE(V)$, $P(V)$ and
$\tilde{P}(V)$ respectively. Assume now that  $S={\rm
Spec(\OO_{X,Q_0})}$, with $X$ and $Q_0$ as in the preceding
sections, and $\C=\{Q_0,\ldots,Q_n\}$ is a constellation over $S$
with associated composition of blowing-ups
$\sigma_{\C}:S_{\C}\rightarrow S$. In this case $N^1(S_{\C}/S)$
coincides with the free abelian group ${\mathbb E}_{\C}$ and
$\{E_1,\ldots,E_n\}$ is a $\bR$-basis of $A^1(S_{\C}/S)$ \cite[Lem.
15]{Cu}. Moreover, the characteristic cone $\tilde{P}(S_{\C}/S)$
(resp. nef cone $P(S_{\C}/S)$) is the one generated by the image in
$A^ 1(S_{\C}/S)$ of the divisors $D$ such that $-D$ (resp. $D$)
belongs to ${\mathbb E}_{\C}^{\sharp}$ (resp. ${\rm
Nef}(S_{\C}/S)$). If $d=\dim S=2$, $N^1(S_{\C}/S)$ and
$N_1(S_{\C}/S)$ are identified with $N^1$. Taking into account the
unique factorization of the ideals in $\J_{\C}$ as a product of the
simple complete ideals ${\mathcal P}_{Q_i}$ (by Zariski's theory)
and Proposition \ref{prop:2}, one has that the semigroup ${\mathbb
E}_{\C}^{\sharp}$ is freely generated by the divisors $D(Q_i)$. This
implies that the cone $\tilde{P}(S_{\C}/S)$ is the {\it regular}
cone (which coincides with $P(S_{\C}/S)$) generated by the images in
$A^1(S_{\C}/S)$ of the divisors $-D(Q_i)$. For $d>2$, the cone
$\tilde{P}(S_{\C}/S)$ contains the regular sub-cone $L_{\C}$
generated by the divisors $-D(Q_i)$ but, in general, one has
$L_{\C}\not=\tilde{P}(S_{\C}/S)$. If $d>2$, the cone
$\tilde{P}(S_{\C}/S)$ is not, in general, regular (as we shall see
later) and, hence, there is not, in general, unique factorization of
finitely supported complete ideals as $*$-product of $*$-simple
ideals. Furthermore, the regularity of the characteristic cone does
not imply unique factorization of complete ideals (see \cite[Example
4.2]{CG}). There is a weaker notion than the unique factorization
which is detected from the structure of the characteristic cone: the
{\it semi-factoriality}.
\begin{defn}
{\rm Let $G$ be a commutative semigroup with cancellation law. An
element $g\in G\setminus \{0\}$ is called {\it extremal} if $g$ has
no inverse in $G$ and if a factorization (additively written)
$ng=a+b$ (with $n$ an integer) implies that $sa=qg$ and $tb=pg$ for
suitable integers $a,b,p,q$. Two extremal elements $x$ and $y$ are
called {\it equivalent}, $x\sim y$, if there are positive integers
$m$ and $n$ such that $nx=my$. $G$ is {\it semi-factorial} if to
each $g\in G$ with $g\not=0$ there is an integer $n>0$ such that
$ng$ is a sum of extremal elements, and this factorization is unique
in the following sense: if $ng= a_1+\ldots+a_s$, $a_i$ extremal,
$a_i \not\sim a_j$ for $i\not=j$, and $mg=b_1+\ldots+b_t$, $b_i$
extremal, $b_i \not\sim b_j$ if $i\not=j$, then $s=t$ and $a_i\sim
b_i$ after reindexing. }
\end{defn}
Notice that an ideal $I\in \J_{\C}$ is extremal in $(\J_{\C},*)$ iff
$D(\K_{I})$ is extremal in ${\mathbb E}_{\C}^{\sharp}$ iff
$-D(\K_{I})$ generates an extremal ray of the cone
$\tilde{P}(S_{\C}/S)$. From this fact, it can be easily deduced the
following result:
\begin{prop}
The semigroup $\J_{\C}$ is semi-factorial if and only if the cone
$\tilde{P}(S_{\C}/S)$ is simplicial (that is, it is spanned by
linearly independent elements).
\end{prop}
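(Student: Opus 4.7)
The plan is to work entirely through the semigroup isomorphism $(\J_{\C}, *) \cong (\mathbb{E}_{\C}^{\sharp}, +)$ furnished by Proposition \ref{prop:2}, together with the correspondence stated in the paragraph immediately preceding the statement: an ideal $I \in \J_{\C}$ is extremal in $(\J_{\C},*)$ iff $-D(\K_I)$ generates an extremal ray of $\tilde{P}(S_{\C}/S)$, and two extremal ideals are equivalent in the semigroup sense iff the corresponding classes lie on the same extremal ray. This reduces the assertion to a purely cone-theoretic statement: the rational cone $\tilde{P}(S_{\C}/S)$ is simplicial iff every class in $\mathbb{E}_{\C}^{\sharp}$ admits, after multiplication by some positive integer, an essentially unique decomposition as a sum of classes lying on extremal rays.

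For the direction ``simplicial $\Rightarrow$ semi-factorial'', assume $\tilde{P}(S_{\C}/S)$ is spanned by $\rho := \rho(S_{\C}/S)$ linearly independent rays, and choose representatives $D_i \in \mathbb{E}_{\C}^{\sharp}$, $1 \leq i \leq \rho$, on each of these rays. For any $D \in \mathbb{E}_{\C}^{\sharp}$, linear independence of the $D_i$ in $A^1(S_{\C}/S)$ yields a unique expression $-D = \sum_i \alpha_i(-D_i)$ with real coefficients; since $-D \in \tilde{P}(S_{\C}/S)$ one has $\alpha_i \geq 0$, and rationality of all vectors involved forces $\alpha_i \in \mathbb{Q}_{\geq 0}$. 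Clearing denominators produces a positive integer $n$ with $nD = \sum_i (n\alpha_i) D_i$, a decomposition into pairwise inequivalent extremal elements of $\mathbb{E}_{\C}^{\sharp}$, and uniqueness of this decomposition follows at once from the linear independence of the $D_i$.

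For the converse, assume $(\J_{\C},*)$ is semi-factorial; then every element of $\mathbb{E}_{\C}^{\sharp}$ is a non-negative rational combination of finitely many classes on extremal rays, so the extremal rays together span $\tilde{P}(S_{\C}/S)$. Suppose, for contradiction, that the cone were not simplicial. Then a finite set of extremal ray representatives $D_k \in \mathbb{E}_{\C}^{\sharp}$ on pairwise distinct rays admits a non-trivial linear relation $\sum_k c_k D_k = 0$; separating the indices by the sign of $c_k$ and clearing denominators yields an equality
\[
\sum_{c_i > 0} (nc_i) D_i \;=\; \sum_{c_j < 0} (-nc_j) D_j
\]
in $\mathbb{E}_{\C}^{\sharp}$. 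The two sides express the same element as sums of pairwise inequivalent extremal elements indexed by disjoint sets of extremal rays, contradicting the uniqueness clause in the definition of semi-factoriality.

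The main technical point to handle carefully is ensuring that the combinatorial identification of extremal rays of $\tilde{P}(S_{\C}/S)$ with equivalence classes of extremal elements of $\mathbb{E}_{\C}^{\sharp}$ is compatible with choosing integral representatives: this uses that $\mathbb{E}_{\C}^{\sharp}$ is a sub-semigroup of the lattice $N^1$ which generates $\tilde{P}(S_{\C}/S)$ as a convex cone and is closed under positive integer multiples, so every rational point on an extremal ray admits a positive integer multiple inside $\mathbb{E}_{\C}^{\sharp}$, providing the representatives $D_i$, $D_k$ used in both directions of the argument.
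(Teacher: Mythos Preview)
Your argument is correct and is precisely the ``easy deduction'' the paper alludes to: the paper does not spell out a proof but only records the key fact that extremal elements of $(\J_{\C},*)$ correspond, via Proposition~\ref{prop:2}, to extremal rays of $\tilde P(S_{\C}/S)$, and leaves the rest to the reader. Your write-up makes this explicit in both directions, and the only points one might add for completeness are that $\tilde P(S_{\C}/S)$ is full-dimensional (so ``not simplicial'' forces more than $\rho(S_{\C}/S)$ extremal rays) and that strong convexity guarantees both signs occur in the linear relation you produce.
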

The following result and the examples mentioned below show that, in
general, the semi-factoriality of $\J_{\C}$ does not hold if $d>2$.
\begin{prop}\cite[Th. 20]{Cu} Suppose that $S={\rm Spec}(R)$, where $R$ is the localization
at $(x,y,z)$ of the polynomial ring $\mathbb{K}[x,y,z]$. Let $Q_0$
be the closed point of $S$ and let $\C_n=\{Q_0,Q_1,\ldots,Q_n\}$ be
a constellation over $S$ such that $Q_1,\ldots,Q_n$ are $n$ closed
points in general position on the exceptional divisor associated to
the blowing-up at $Q_0$. Then $\tilde{P}(S_{\C_n}/S)$ is simplicial
if and only if $n\leq 2$.
\end{prop}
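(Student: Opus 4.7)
The plan is to use Proposition~\ref{prop:2} to identify $\tilde{P}(S_{\C_n}/S)$ with the real cone generated by the weight vectors $(a,b_1,\ldots,b_n)$ of idealistic clusters on $\C_n$, and to exploit the isomorphism $B_0\cong\mathbb{P}^2$ in order to interpret $\sigma_\C$-generation via linear systems of plane curves. Concretely, an integer cluster with $a,b_i\ge 0$ is idealistic precisely when the linear system of plane curves of degree $a$ passing through each $Q_i$ with multiplicity $\ge b_i$ is base-point free off $\{Q_1,\ldots,Q_n\}$.

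For the forward direction ($n\le 2$ implies simplicial) I would pin down the cone by hand. The cases $n=0,1$ are immediate. For $n=2$, since $Q_1\neq Q_2$ are in general position, a B\'ezout count shows that the only potential fixed component of the linear system is the line through $Q_1,Q_2$, and it is fixed exactly when $b_1+b_2>a$. Hence the idealistic cone is cut out by $a,b_i\ge 0$ and $b_1+b_2\le a$, and the identity
\[
(a,b_1,b_2)=(a-b_1-b_2)(1,0,0)+b_1(1,1,0)+b_2(1,0,1)
\]
exhibits each such vector as a non-negative combination of the three linearly independent weight sequences of $\PP_{Q_0},\PP_{Q_1},\PP_{Q_2}$, so $\tilde{P}(S_{\C_n}/S)$ is the simplicial cone they span.

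For the reverse direction ($n\ge 3$ implies not simplicial) I would exhibit $n+2$ extremal rays in the $(n+1)$-dimensional cone. Each of the $n+1$ rays $-D(Q_i)$ is extremal: any decomposition $(1,0,\ldots,1,\ldots,0)=u+v$ inside the cone forces the coordinates outside $\{0,i\}$ of both summands to vanish, and the constraints $b_i^u\le a^u$, $b_i^v\le a^v$ combined with $b_i^u+b_i^v=1=a^u+a^v$ yield proportionality. The extra ray comes from $\K^*=(\C_n,(2,1,1,1,0,\ldots,0))$ with the three ones placed at any triple $\{Q_{i_1},Q_{i_2},Q_{i_3}\}$: for points in general position the two-dimensional linear system of conics through a chosen triple has base locus exactly that triple and does not pass through any further general point, so $\K^*$ is idealistic. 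Extremality of $-D(\K^*)$ follows by an analogous bookkeeping: after discarding the vanishing coordinates outside $\{0,i_1,i_2,i_3\}$, summing the three inequalities $b_{i_k}^u+b_{i_l}^u\le a^u$ over pairs within the triple (and likewise for $v$) and comparing with $\sum_k(b_{i_k}^u+b_{i_k}^v)=3$, $a^u+a^v=2$ forces all six inequalities to be tight, which pins $u$ and $v$ proportional to $(2,1,1,1,0,\ldots,0)$.

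The main subtlety is to verify that, for large $n$, additional fixed-component obstructions from higher-degree plane curves (conics through five points, cubics through nine, etc.) neither eject $\K^*$ from the cone nor manufacture new decompositions. For $\K^*$ itself, the total weight on any subset of the $Q_i$ is at most $3$, well below the B\'ezout thresholds $2a=4$, $3a=6$, etc., so none of these higher obstructions is active; and extremality only becomes easier when the ambient cone is further restricted, since the pool of candidate decompositions shrinks.
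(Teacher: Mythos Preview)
The paper does not prove this proposition; it is quoted verbatim from Cutkosky \cite[Th.~20]{Cu} with no argument given, so there is nothing in the paper to compare your proof against.  What follows is therefore an assessment of your argument on its own merits.

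Your overall strategy is sound and the proof is essentially correct.  Both directions go through: for $n\le 2$ you correctly pin down the characteristic cone as the simplex on $D(Q_0),\dots,D(Q_n)$ (the containment $\subseteq$ comes from the B\'ezout obstruction along $\ell_{12}$, and $\supseteq$ from the fact that the $D(Q_i)$ are idealistic and the galaxy is a semigroup); for $n\ge 3$ you exhibit $n+2$ distinct extremal rays in an $(n{+}1)$--dimensional cone, and your key observation that extremality in the larger ``B\'ezout cone'' $\{b_i\ge 0,\ b_i+b_j\le a\}$ implies extremality in the smaller characteristic cone is exactly right.

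One point of exposition deserves care.  The biconditional you announce --- idealistic iff the degree-$a$ linear system is base-point free off $\{Q_1,\dots,Q_n\}$ --- is only the restriction of $\sigma_{\C}$--generation to the strict transform $E_0$; one must in principle also check generation along each $E_i\cong\mathbb P^2$.  Fortunately your argument never uses the ``if'' direction in full generality.  You only need (i) the ``only if'' direction, which gives the B\'ezout inequalities $b_i+b_j\le a$ and $b_i\ge 0$ cutting out a cone containing $\tilde P(S_{\C_n}/S)$; and (ii) idealisticness of the specific clusters $D(Q_i)$ (automatic, these are the $*$-simple ideals) and of $\K^*=(2,1,1,1,0,\dots,0)$.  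For the latter it would strengthen the write-up to note explicitly that generation at points of $E_i\setminus E_0$ is handled by generic elements of $M^3\subset I_{\K^*}$, while generation along $E_0$ and along $E_0\cap E_i$ is handled by conics through $Q_1,Q_2,Q_3$ with varying tangent direction at $Q_i$.  With that addition the proof is complete.
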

 There are several examples in the literature showing that the characteristic cone $\tilde{P}(S_{\C}/S)$ can have very
different shapes, indicating the existence of different
factorization's phenomena:
\begin{itemize}
\item[(i)] It can be polyhedral (that is, finitely generated) but not simplicial \cite[Example
4.1]{CG}.
\item[(ii)] It can have infinitely many extremal rays \cite[Example
2]{Cu}.
\item[(iii)] It can be non-closed (\cite[Example 4.3]{CG} and \cite[Example 3]{Cu}).
\item[(iv)] As we have pointed out before, it can be regular but with
$\J_{\C}$ not having unique factorization \cite[Example 4.2]{CG}.
\end{itemize}
\subsection{Toric constellations}\label{sect:25}
Now we consider the toric constellations and proximity. We begin by recalling some
definitions and fixing notations for toric varieties (for a detailed treatment see
some of the basic references on this subject, e.g. chapter 1 of \cite{O} or \cite{TE}).
\vs
Let $N \cong {\bZ}^d$ be a lattice of dimension $d\ge 2$ and $\Si$ a {\em fan}
in $N_{\bR} = N {\otimes}_{\bZ} {\bR}$, i.e. a finite set of
strongly convex rational polyhedral cones such that every face of a cone of $\Si$ belongs
to $\Si$ and the intersection of two cones of $\Si$ is a face of both.
Denote by $X_{\Si}$ the toric variety
over a field $\bK$ associated with $\Si$, equipped with the action of an algebraic
torus $T \cong {(\bK^*)}^d$. There is a one to one canonical correspondence
between the $T$-orbits in $X_{\Si}$ and the cones of $\Si$. Two basic facts
of this correspondence are that the dimension
of a T-orbit is equal to the codimension of the corresponding cone, and that a T-orbit
is contained in the {\em closure} of another T-orbit if and only if  the cone associated
with the first one contains the cone associated with the second one.
The {\em morphisms} of toric varieties are the equivariant maps induced by the maps of fans
$\varphi : (N', \Si') \prox (N, \Si)$ such that $\varphi : N' \prox N$ is a $\bZ$-linear
homomorphism whose scalar extension $\varphi : N'_\bR \prox N_\bR$ has the property
that for each $\s' \in \Si'$ there exists $\s \in \Si$ such that $\varphi(\s')\subset \s$ ;
(see \cite{O}, 1.5).\vs
Let $X_0 := X_{\Si_0} \cong {\bK}^d$ be the $d$-dimensional affine toric variety
associated with the fan $\Si_0$ formed by all the faces of a regular d-dimensional
rational cone $\De$ in $N_{\bR}$. Recall that a rational cone is called {\em regular} (or
nonsingular) if the primitive integral extremal points form a subset of a basis of the lattice.
\vs
A {\em toric constellation} of infinitely near points is a constellation
$\C = \{Q_0, \ldots, Q_n\}$ such that each $Q_j$ is a fixed point for the
action of the torus in the toric variety $X_j$ obtained by blowing-up $X_{j-1}$
with center $Q_{j-1}$, {\small $1 \le j \le n$}. If a toric constellation is a chain,
it is called a {\em toric chain}.
The identification of constellations stated after definition \ref{defn:const} is the same in the toric case,
with equivariant isomorphisms.
\vsd

\noindent {\em Codification of toric constellations and proximity}.
\vsd

By choosing a fixed {\em ordered basis} $\B = \{v_1,...,v_d\}$ of the lattice $N$  we obtain
a {\em codification} of the toric constellations, as well as criteria for proximity
and (as shown in the following) linear proximity.\vs
Let $\De = \langle \B \rangle$ be the (regular) cone generated by the basis $\B$.
The blowing-up $\s_i : X_i \rightarrow X_{i-1}$ of the closed orbit $Q_{i-1}$, is
described as an elementary subdivision of a fan, as follows.\vs
The variety $X_1$ is the toric variety associated with the fan $\Si_1$, obtained as
the minimal subdivision of $\Si_0$ which contains the ray through $u = \sum_{1\le j\le d}  v_j$.\\
\noindent For each integer $i$, $1 \le i \le d$, let $\B_i$ be the ordered basis of $N$
obtained by replacing $v_i$ by $u$ in the basis $\B$; and let
$\De_i := \langle \B _i \rangle$. The exceptional divisor $B_0$ is the closure
in $X_1$ of the $T$-orbit defined by the ray through $u$, and each T-fixed point in
$X_1$ corresponds to a maximal cone $\De_i$ of the fan $\Si_1$, {\small $1 \le i \le d$}.
The choice of the point $Q_1\geq Q_0$ is thus equivalent to the choice of an
integer $a_1$, $1\le a_1\le d$, which determines a cone $\De_{a_1}$ of
the fan $\Si_1$.
The subdivision $\Si_2$ of $\Si_1$
corresponding to the blowing-up of  $Q_1$ is obtained by replacing
$\De_{a_1}$ (and its faces) in $\Si _1$
by the cones $\De_{a_{_1}i}:=\langle \B_{a_{_1}i}\rangle $
(and their faces), where $\B_{a_{_1}i}$ is the ordered basis of $N$
obtained from $\B_{a_1}$ by the substitution of its $i$-th vector
by $\sum_{v\in \B_{a_1}}v$ .
The choice of $Q_2\in B_1$ is equivalent to the choice of an integer $a_2$ ,$
1\le a_2\le d$, which determines a (regular) cone $\De_{a_{_1}a_{_2}}$.\vs
Proceeding by induction on $n$ we obtain a {\em codification} of toric chains and
also constellations, since for each $Q\in \C$ , the constellation ${\C}^Q$ is a chain.
The codification is given by trees with weighted edges, where the weights
are integers $a$, $1\le a\le d$, which give the {\em direction} in which the
following blowing-up is done. The precise description follows.
\begin{defn}
Let $\G$ be a tree, $\E(\G)$ the set of edges of $\G$, d an integer, d $\geq 2$.\\
\noindent A {\em d-weighting} of $\G$ is a map $\ga : \E(\G) \rightarrow \{1, \ldots, $d$\}$
which associates to each edge of $\G$ a positive integer not greater than d,
such that two edges with a common origin have different weights. A couple $(\G ,\ga)$
is called a \em{d-weighted tree}.
\end{defn}
\begin{prop}
Let $\B$ be  an ordered basis of the lattice $N$ and $n$ a positive integer.
\begin{itemize}
\item[(a)] The map which associates to each sequence of integers
$\{a_1, \ldots, a_n \}$ such that $1\le a_i \le d$, $1\le i \le n$, the toric
chain $\{Q_0, \ldots , Q_n\}$
where $Q_0$ is the $T$-orbit corresponding to the cone $\De =\langle \B \rangle$,
and where $Q_i$, {\small $1\le i\le n$}, is the $T$-orbit in $X_i$ corresponding to the
cone $\De_{a_1 \ldots a_i}$ of the fan $\Si_i$, is a bijection between the set of
such sequences and the set of d-dimensional toric {\em chains} with $n+1$ points.
\item[(b)]  A natural bijection between the set
of d-dimensional toric constellations and the set of d-weighted trees
is induced by the correspondence {\em (a)}.
\end{itemize}
\end{prop}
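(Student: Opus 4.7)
The plan is to prove (a) by induction on $n$ and then derive (b) using the fact that each $\C^Q$ is a chain.

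For (a), the case $n=0$ is immediate: the empty sequence corresponds to the one-point chain $\{Q_0\}$, with $Q_0$ the $T$-fixed point for the cone $\De=\langle\B\rangle$. For the inductive step, suppose the bijection is established for chains of length $n$, and let $\{Q_0,\ldots,Q_n\}$ be a toric chain. By induction the initial segment $\{Q_0,\ldots,Q_{n-1}\}$ is encoded by a unique sequence $(a_1,\ldots,a_{n-1})$, so $Q_{n-1}$ is the $T$-fixed point associated to the regular cone $\De_{a_1\ldots a_{n-1}}\in\Si_{n-1}$. The explicit description of the blowing-up of $Q_{n-1}$ recalled before the proposition shows that the fan $\Si_n$ is obtained from $\Si_{n-1}$ by replacing $\De_{a_1\ldots a_{n-1}}$ with the $d$ regular cones $\De_{a_1\ldots a_{n-1} i}$, $1\le i\le d$, together with their faces; the $T$-fixed points of $X_n$ lying on the exceptional divisor $B_{n-1}$ correspond bijectively to those maximal cones that contain the newly added ray, namely the $d$ cones just listed. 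Hence $Q_n$ determines, and is determined by, a unique integer $a_n\in\{1,\ldots,d\}$, completing the induction.

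For (b), the partial ordering $\geq$ on $\C$ makes it a rooted tree with root $Q_0$. For each $Q\in\C$ the sub-constellation $\C^Q$ is a toric chain, hence by (a) it is encoded by a unique sequence in $\{1,\ldots,d\}$; labelling each edge joining a point $R\in Q^+$ to its parent $Q$ by the last term of the sequence for $\C^R$ produces a $d$-weighting $\ga$ of the tree. Distinct children $R,R'\in Q^+$ are distinct $T$-fixed points on the exceptional divisor over $Q$, hence correspond to distinct maximal cones in the subdivision produced by blowing up $Q$, so their edges carry distinct weights. Conversely, given any $d$-weighted tree $(\G,\ga)$, one constructs a toric constellation recursively by applying (a) along each root-to-leaf path; the sibling-distinctness of the weights guarantees that the children of each vertex correspond to distinct $T$-fixed points on the relevant exceptional divisor, so the construction yields a bona fide set of points forming a toric constellation, and the two assignments are inverse to each other.

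The main technical content underlying both parts is the identification, at each stage, of the $T$-fixed points on an exceptional divisor $B_{i-1}$ with the $d$ newly created maximal cones of the stellar subdivision of $\De_{a_1\ldots a_{i-1}}$. This rests on the standard dictionary between $T$-orbits and cones of a fan and the explicit form of the stellar subdivision by the sum of the extremal rays, both recalled in the excerpt; once this is in place, the rest of the argument is purely combinatorial.
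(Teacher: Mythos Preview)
Your proposal is correct and follows essentially the same approach as the paper. The paper does not give a separate formal proof of this proposition; instead, the argument is indicated in the discussion immediately preceding it (``Proceeding by induction on $n$ we obtain a codification of toric chains and also constellations, since for each $Q\in\C$, the constellation $\C^Q$ is a chain''), which is exactly the inductive scheme you spell out, together with the identification of the $T$-fixed points on $B_{i-1}$ with the $d$ maximal cones of the stellar subdivision.
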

\begin{rem}
Note that in a d-weighted tree each vertex is the origin of at most d edges.
A d-weighting of a tree $\G$ induces a partition of the set $\E(\G)$ of edges,
where two edges are in the same class if they have the same weight.
To each class of isomorphism of d-dimensional toric constellations is associated
a unique class of isomorphism of trees equipped with a partition of the set of
edges, partition with at most d classes of edges \cite{GP}.
\end{rem}
\
Given a toric constellation by a d-weighted graph, a vertex
following $q$ through a chain with edges weighted by a sequence
$(a_1, \ldots , a_k)$  is denoted by $q(a_1, \ldots , a_k)$ ; if $Q$ is the
point corresponding to $q$, then the point corresponding to $q(a_1, \ldots , a_k)$
is written in a similar way  $Q(a_1, \ldots , a_k)$.
\begin{prop} \label{prop:critprox}
({\em Criterion} for proximity in terms of a codification)
$Q(a_1, \ldots , a_k) \prox Q$ if and only if $a_1 \ne a_j$ for $2\le j\le k$ .
\end{prop}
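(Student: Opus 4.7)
The plan is to translate the question about proximity into purely combinatorial statements about the bases $\B_{a_1,\ldots,a_k}$, and then track the fate of the distinguished vector $u = v_1 + \cdots + v_d$ along the chain of elementary subdivisions.

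First I would set up the dictionary. The constellation is toric and $Q(a_1,\ldots,a_k)$ is the $T$-fixed point of $X_k$ corresponding to the maximal cone $\De_{a_1,\ldots,a_k} = \langle \B_{a_1,\ldots,a_k}\rangle$. The exceptional divisor $B_Q = B_0$ is the closure in $X_1$ of the $T$-orbit of the ray $\rho_u$ generated by $u$; because the subdivisions that produce the subsequent $X_j$ only add rays and never remove $\rho_u$, the strict transform $E_Q$ on $X_k$ remains the $T$-invariant divisor associated with $\rho_u$. By the standard toric correspondence, a $T$-fixed point lies on a given $T$-invariant divisor if and only if the ray of that divisor is one of the rays of the maximal cone attached to the point. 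Hence
\[
Q(a_1,\ldots,a_k) \prox Q \iff \rho_u \text{ is a ray of } \De_{a_1,\ldots,a_k} \iff u \in \B_{a_1,\ldots,a_k}.
\]

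Next I would prove the combinatorial statement $u \in \B_{a_1,\ldots,a_k} \iff a_j \ne a_1$ for all $2\le j\le k$ by induction on $k$. The rule of formation is that $\B_{a_1,\ldots,a_i}$ is obtained from $\B_{a_1,\ldots,a_{i-1}}$ by replacing the vector sitting at position $a_i$ with the sum of all current basis vectors, keeping every other position fixed. At step $1$ the vector $u$ enters at position $a_1$. The key observation to prove inductively is that as long as no index $a_j$ with $2\le j\le i$ equals $a_1$, the vector $u$ remains exactly at position $a_1$ of $\B_{a_1,\ldots,a_i}$, untouched. Conversely, as soon as one hits $a_j = a_1$ for some $j\ge 2$, the vector currently at position $a_1$ — which by the previous part of the induction is still $u$ — is replaced by the sum of the $\B_{a_1,\ldots,a_{j-1}}$, and thereafter $u$ never reappears, since each subsequent step only performs a single positional substitution by a new sum vector.

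Combining the two steps gives the criterion. The only real subtlety — and what I would treat as the main point to justify carefully — is the claim that the strict transform $E_Q$ on $X_k$ is still the $T$-divisor of $\rho_u$. This is immediate in the toric setting because elementary star subdivisions do not delete rays, and the strict transform of a $T$-invariant prime divisor under a toric morphism induced by refinement of fans is the $T$-divisor corresponding to the same ray. Everything else reduces to the straightforward bookkeeping argument above.
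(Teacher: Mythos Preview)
Your proof is correct and follows essentially the same approach as the paper: both reduce the proximity $Q(a_1,\ldots,a_k)\prox Q$ to the condition that the ray through $u$ is a face of $\De_{a_1\ldots a_k}$, i.e.\ that $u\in\B_{a_1,\ldots,a_k}$. The paper simply asserts this combinatorial equivalence in one sentence, whereas you spell out the inductive tracking of $u$ through the sequence of positional substitutions; your version is a fleshed-out form of the same argument.
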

\begin{proof}
The criterion follows from the fact that this is the condition to obtain, by elementary
subdivisions of a regular fan, an adjacent maximal cone $\De_{a_1 \ldots a_k}$
(corresponding to a 0-dimensional orbit) to the central ray of $\De_{a_1}$
(corresponding to the exceptional divisor) of the first subdivision
of the cone $\De$ corresponding to $Q$. This is equivalent to saying that
$Q(a_1, \ldots , a_k) \in E_Q$, i.e. $Q(a_1, \ldots , a_k) \prox Q$.
\end{proof}
We obtain a characterization of {\em toric} P-Enriques diagrams and the minimum dimension
for a toric constellation with a given P-Enriques diagram ( \cite{GP}, \cite{G-S})
\newpage

\begin{thm}   \label{thm:toricPEd}
A P-Enriques diagram $(\G , (\pprox))$ is toric, i.e. may be induced by a toric constellation,
if and only if:
\begin{itemize}
\item[(a)] The proximity index is non-decreasing, i.e.
$ind(r) \geq ind(q)$ if $r \succeq q$.
\item[(b)] If $r$ is proximate to $q$, then there is at most one  vertex
$s$ consecutive to $r$ and not proximate to $q$, i.e. if $r \pprox q$ then
$\# \{ s\in r^+ \st s \not \pprox q\} \le 1$.
\end{itemize}
If these conditions hold, then the minimum dimension $dt_{\PP}(\G, (\pprox))$ of
a toric constellation inducing the given P-Enriques diagram $(\G, (\pprox))$ is
$max ( 2, max_{q\in \G} (ind(q) + s(q)))$,
where $s(q) := \#\{r\in q^+ \st ind(r) > ind(q)\}$ is the number of consecutive
points to $q$ whose proximity index is greater than the proximity index of q.
\end{thm}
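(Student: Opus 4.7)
The plan is to exploit the toric codification of constellations (Proposition \ref{prop:critprox}) and the fan-theoretic picture of iterated regular subdivisions. For necessity, I would work with a toric constellation $\C$ and, at each point $Q\in \C$, consider the regular $d$-dimensional cone $\De_Q$ in the corresponding fan, whose $d$ rays are in bijection with the $T$-invariant prime divisors through $Q$. Exactly $ind(Q)$ of these rays correspond to exceptional divisors $E_p$ with $Q\pprox p$, and the remaining $d-ind(Q)$ to non-exceptional ones. Each $R\in Q^+$ is obtained by replacing exactly one ray of $\De_Q$ by the central ray (which corresponds to $E_Q$), so the rays of $\De_R$ are the ray of $E_Q$ together with $d-1$ rays of $\De_Q$. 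This yields $ind(R)\in\{ind(Q),ind(Q)+1\}$ according as the dropped ray is exceptional or non-exceptional, and iterating along any chain from $Q$ to $R\succeq Q$ gives (a). For (b), if $R\pprox Q$ then the ray of $E_Q$ lies in $\De_R$, and among the points of $R^+$ exactly the one that drops that ray is not proximate to $Q$. The lower bound $d\geq ind(Q)+s(Q)$ falls out from the same picture: the $s(Q)$ children of $Q$ with strictly larger proximity index must drop distinct non-exceptional rays of $\De_Q$, of which only $d-ind(Q)$ exist.

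For sufficiency and matching the stated formula for $dt_\PP$, I would induct on $|\V(\G)|$ as in the proof of Theorem \ref{thm:PEd}, but now making toric choices at each blow-up. Set $d=\max(2,\max_{q\in\V(\G)}(ind(q)+s(q)))$, pick a maximal vertex $r\in q^+$, and let $\C'$ be a toric $d$-dimensional constellation realizing $\G\setminus\{r\}$ supplied by the induction. Condition (c) of Theorem \ref{thm:PEd} forces $\{p\prec q:r\pprox p\}\subseteq\{p:q\pprox p\}$, while hypothesis (a) of the present theorem forces this inclusion to be of codimension zero or one. Accordingly, either $ind(r)=ind(q)+1$ and one must drop a non-exceptional ray of $\De_Q$, or $ind(r)=ind(q)$ and there is a unique $p^*\prec q$ with $q\pprox p^*$ but $r\not\pprox p^*$, so one must drop the ray of $E_{p^*}$. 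In the first case, at most $s(q)-1$ non-exceptional rays of $\De_Q$ have been claimed by previously handled siblings of $r$, and since $d\geq ind(q)+s(q)$ a fresh one is available. In the second, hypothesis (b) applied to $p=p^*$ guarantees that no other child of $q$ drops the ray of $E_{p^*}$, so that ray is free. Proposition \ref{prop:critprox} then translates the chosen subdivision into the desired proximity pattern for the point $R$ corresponding to $r$.

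The main obstacle is ensuring the global consistency of the ray-drop assignment at each $q$: every child with $ind=ind(q)$ must drop a specific exceptional ray of $\De_Q$, and the map $r\mapsto p^*_r$ sending such a child to its distinguished non-proximate ancestor must be injective. This injectivity is exactly the content of hypothesis (b), so the argument is clean provided one organizes it correctly. I would therefore precompute, from the Enriques data alone, the ray-drop assignment for every vertex of $\G$, verify its compatibility using (a) and (b), and only then realize these choices geometrically in any linear order refining $\succeq$. This decouples the combinatorial verification from the inductive construction of fans, and makes transparent why the stated dimension bound is exactly the one needed.
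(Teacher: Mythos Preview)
Your argument is correct and well-organized. Note, however, that the paper does not actually include a proof of this theorem: it states the result and defers to the references \cite{GP} and \cite{G-S}. So there is no in-paper proof to compare against directly.

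That said, your approach is precisely the one suggested by the surrounding material. The necessity argument reads off conditions (a) and (b) from the fan picture via the codification and Proposition~\ref{prop:critprox}, exactly as the paper's setup invites; and the sufficiency argument is the toric analogue of the inductive proof of Theorem~\ref{thm:PEd}, with the key extra bookkeeping being the ray-drop assignment at each vertex. Your identification of the dichotomy $ind(r)\in\{ind(q),ind(q)+1\}$ (using condition~(c) of Theorem~\ref{thm:PEd} together with hypothesis~(a) here) and your use of hypothesis~(b) to guarantee that the map $r\mapsto p^*_r$ is injective on the children with $ind(r)=ind(q)$ are exactly the two points that make the toric case more constrained than the general one and force the larger minimum dimension. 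The only small care to take in writing it up is to state the inductive hypothesis for arbitrary $d\geq \max(2,\max_q(ind(q)+s(q)))$ rather than for the minimum value, so that removing a maximal vertex (which may decrease $s(q)$) does not break the induction; you implicitly do this, but it is worth making explicit.
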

\begin{rem}
The minimum dimension $dt_{\PP}$ may be greater than $d_{\PP}$, the dimension in the
not necessarily toric case (Theorem~\ref{thm:PEd}), because there are less points
available, so one needs to add $s(q)$ to the proximity index, not just $1$ as in
the general case.
\end{rem}
\begin{cor}
A P-Enriques diagram $(\G , (\pprox))$ whose graph $\G$ is a {\em chain},
is toric if and only if
the proximity index is not decreasing. In this case, the minimum dimension
of an associated constellation is the index of the terminal point
(and at least $2$).
\end{cor}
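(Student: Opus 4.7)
The plan is to derive this corollary as a direct specialization of Theorem~\ref{thm:toricPEd} to the case where $\G$ is a chain, exploiting the fact that in a chain every vertex has at most one consecutive vertex. Let me denote the chain as $q_0 \prec q_1 \prec \cdots \prec q_N$, so $q_N$ is the terminal point and $q^+$ consists of at most one element for every $q$.

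First I would verify the characterization of being toric. Condition (a) of Theorem~\ref{thm:toricPEd} says exactly that $ind$ is non-decreasing along $\succeq$, which for a chain reduces to the sequence $ind(q_0), ind(q_1), \dots, ind(q_N)$ being non-decreasing. The main point is then to observe that condition (b) is automatic for chains: if $r \pprox q$, the set $\{s \in r^+ : s \not\pprox q\}$ has cardinality at most $|r^+| \le 1$. Hence the two conditions collapse to the single monotonicity condition on $ind$, giving the ``if and only if'' in the corollary.

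Next I would compute the minimum dimension using the formula $dt_{\PP} = \max\bigl(2,\ \max_{q\in \V(\G)}(ind(q)+s(q))\bigr)$ from the theorem. For $q = q_N$, the set $q^+$ is empty, so $s(q_N)=0$ and the contribution is $ind(q_N)$. For $q = q_i$ with $i < N$, we have $q^+ = \{q_{i+1}\}$, so $s(q_i) \in \{0,1\}$; in either case $ind(q_i)+s(q_i) \le ind(q_{i+1})$, because if $s(q_i)=1$ then $ind(q_{i+1}) > ind(q_i)$, i.e. $ind(q_{i+1}) \ge ind(q_i)+1$, and if $s(q_i)=0$ then already $ind(q_i) \le ind(q_{i+1})$ by monotonicity. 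Iterating, $ind(q_i) + s(q_i) \le ind(q_N)$ for every $i$, and this bound is attained at $i=N$. Therefore $\max_q (ind(q)+s(q)) = ind(q_N)$, and the minimum dimension is $\max(2, ind(q_N))$.

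There is no real obstacle here; the only subtle point is checking the inequality $ind(q_i) + s(q_i) \le ind(q_{i+1})$ case by case, which is what forces the maximum to be attained at the terminal vertex rather than at some intermediate jump in $ind$. Once that is handled, both assertions of the corollary drop out of Theorem~\ref{thm:toricPEd} by direct substitution.
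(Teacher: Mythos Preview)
Your proof is correct and follows essentially the same route as the paper: both reduce to Theorem~\ref{thm:toricPEd}, observe that condition (b) is vacuous for chains since $|r^+|\le 1$, and then simplify the dimension formula. Your case-by-case verification that $ind(q_i)+s(q_i)\le ind(q_{i+1})$ is a fuller justification of the equality $\max_{q}(ind(q)+s(q))=\max_{q} ind(q)$ that the paper simply asserts.
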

\begin{proof}
In the toric chain case the condition (b) of the theorem is automatically
satisfied and $max_{q\in \G} (ind(q) + s(q))) = max_{q\in \G} (ind(q))$ holds.
\end{proof}
\begin{exs}
(1)\hs The simplest example of a non-toric P-Enriques diagram is a chain with four
vertices, say  $q_0, q_1, q_2, q_3$ such that, besides the trivial proximities
of consecutive vertices, the only other proximity is $q_2 \prox q_0$.
In this example one has $ind(q_2)=2$ and $ind(q_3)=1$; condition (a) fails.\vs

\noindent(2)\hs Another example of a non-toric case is a graph of type $\bD_4$, with a
non-central vertex as the root, and with only the proximities of consecutive vertices.
In this case condition (b) fails. Remark that both cases may be induced by two dimensional constellations.\vs

\noindent(3)\hs If the central vertex is the root in a graph of type $\bD_n$, with $n\ge 4$,
and if the only proximities are those of consecutive vertices, then conditions
(a) and (b) hold; the minimal dimension
of a constellation inducing this P-Enriques diagram is $n-1$ for toric constellations
and two for non-toric ones. If $q_0$ is the root, then $ind(q_0) = 0$, $s(q_0) = n-1$,
$t(q_0) = 1$, and $ind(q) = 1$, $s(q) = 0$, $t(q)=0$  for each $q \ne q_0$.\vs

(See Figures (1), (2) and (3)).
\end{exs}

\begin{figure}
\begin{picture}(10,6)(0,0)
\put(0,0){\begin{picture}(2,6)(0,0)
\put(1,1){\line(0,1){3}} \multiput(1,1)(0,1){4}{\som}
\put(1.1,0.9){$q_0$} \put(1.1,1.9){$q_1$}
\put(1.1,2.9){$q_2$} \put(1.1,3.9){$q_3$}
\put(0.8,2){\oval(0.8,2)[l]} \put(0.4,2.1){\vector(0,-1){0,2}}
\put(.5,0){Figure (1).} \end{picture}}

\put(3,0){\begin{picture}(4,6)(0,0)
\put(2,1){\line(0,1){1}} \put(2,2){\line(-1,1){1}}
\put(2.1,0.9){$q_0$} \put(2.1,1.9){$q_1$}
\put(1.15,2.9){$q_2$} \put(3.15,2.9){$q_3$}
\put(2,2){\line(1,1){1}} \multiput(2,1)(0,1){2}{\som}
\put(1,3){\som} \put(3,3){\som} \put(1.5,0){Figure (2).}
\end{picture}}

\put(7,0){\begin{picture}(4,6)(0,0)
\put(2,1){\line(-1,1){1}}
\put(2,1){\line(1,1){1}} \put(3,2){\som}
\multiput(2,1)(0,1){2}{\som} \put(2,1){\line(0,1){1}}
\put(1,2){\som}
\put(2.1,0.9){$q_0$} \put(0.5,2){$q_1$} \put(3.1,2){$q_{n-1}$}
\put(1.3,1.9){$\cdots$} \put(2.3,1.9){$\cdots$}
\put(1.5,0){Figure (3).} \end{picture}}
\end{picture}
\end{figure}
\vsd

\noindent {\em Linear proximity and characteristic cones}
\vsd

In dimension two the exceptional divisors appearing in the definition of the
proximity relations are (rational) curves. In higher dimension we
introduce, in the toric case, a condition involving curves which will be
finer, in general, than the proximity. This new condition arises naturally
for toric clusters in higher dimension, from the generalization of the proximity
inequalities.
\begin{defn}
Let $\C =\{Q_0, \ldots, Q_n\}$ be a toric constellation. A point $Q_j$
is {\em linear proximate} to a point $Q_i$ with respect to a one dimensional
T-orbit $\ell \subset B_{i}$ if $Q_j$ belongs to the strict transform in $X_j$
of the closure of $\ell$.
This relation is denoted by $Q_j \lprox Q_i$ , or $Q_j \llprox Q_i$ if we need
to specify the line $\ell$ involved.
\end{defn}
If $R \lprox Q$ then $R \prox Q$, but the converse does not hold in general.
\begin{prop}   \label{prop:critLP}
(Criterion for linear proximity)
Let $Q$ be a point in a toric constellation of dimension d. Each 1-dimensional orbit
$\ell$ in the exceptional divisor $B_Q$ contains in its closure only two fixed points,
say $Q(a)$ and $Q(b)$.
Then $R \lprox Q$ if and only if there are integers $a$, $b$ and $m$ such that $a\ne b$,
$1\le a\le d$, $1\le b\le d$, $0\le m$ and $R=Q(a, b^{[m]})$ or
$R=Q(b, a^{[m]})$, where $x^{[m]}$ means $x$ repeated $m$ times.
\end{prop}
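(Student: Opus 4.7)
\medskip
\noindent\textbf{Proof proposal.} My plan is to translate the statement into pure combinatorics of fans and track the strict transform of $\bar\ell$ through successive toric blow-ups by following which maximal cones have the corresponding wall as a face. After fixing the basis $\B = \{v_1,\ldots,v_d\}$ so that $Q$ corresponds to the regular maximal cone $\De = \langle \B\rangle$, blowing up $Q$ produces the ray through $u = v_1+\cdots+v_d$ and subdivides $\De$ into the $d$ cones $\De_i$ (with basis $\B_i$). Recall that $B_Q$ is the closure of the orbit $O(\langle u\rangle)$, and that the $1$-dimensional $T$-orbits in $B_Q$ correspond exactly to the walls of $\Si_1$ that contain $\langle u\rangle$. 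Such walls are precisely $\omega_{ab} := \langle u, v_k : k\ne a,\ k\ne b\rangle$ for $a\ne b$, and each such $\omega_{ab}$ is a face of exactly the two maximal cones $\De_a$ and $\De_b$. Hence the closure of the $1$-dimensional orbit $\ell$ has exactly the two fixed points $Q(a)$ and $Q(b)$, proving the first sentence of the statement and fixing the indices $a,b$ associated with $\ell$.

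For the characterization, I will prove by induction on the level of $R$ the following claim: at every intermediate toric variety $X_j$ reached by blowing up the successive points of the constellation, the strict transform $\bar\ell_j$ of $\bar\ell$ is the closure of $O(\omega_{ab})$, regarded in the subdivided fan, and its two fixed points are exactly $Q(a,b^{[p]})$ and $Q(b,a^{[q]})$ for appropriate non-negative integers $p,q$ (with only these two fixed points lying on $\bar\ell_j$). The inductive step splits into two cases according to the point being blown up. If the point blown up is not a fixed point of $\bar\ell_j$, then its cone does not have $\omega_{ab}$ as a face, so the subdivision does not change the cones that do, and $\omega_{ab}$ (hence $\bar\ell$) is unaffected. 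If instead we blow up one of its two fixed points, say $Q(a,b^{[p]})$, I must verify that after replacing the corresponding vector of the basis by the sum $u_{a,b^{[p]}}$ of all vectors in that basis, the unique new maximal cone still containing $\omega_{ab}$ as a face is precisely $\De_{a,b^{[p+1]}}$, i.e.\ that the vector being replaced is the $b$-th one. This is the main computation: the $b$-th vector of the basis $\B_{a,b^{[p]}}$ is the only vector of that basis not lying in $\omega_{ab}$ (the other $d-1$ vectors are $u$ in position $a$, together with the $v_k$ for $k\ne a,b$, all of which belong to $\omega_{ab}$), hence this vector is necessarily the one replaced.

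Given the claim, the ``only if'' direction follows because any fixed point $R$ of the toric constellation that belongs to the strict transform of $\bar\ell$ at some stage must equal one of the two fixed points of $\bar\ell_j$ at that stage, and by induction these are exhausted by the list $\{Q(a,b^{[m]}),\ Q(b,a^{[m]}) : m\ge 0\}$. For the ``if'' direction, starting from $Q(a)$ (resp.\ $Q(b)$) and successively blowing up the chain $Q(a), Q(a,b), Q(a,b,b),\ldots$ (resp.\ $Q(b), Q(b,a),\ldots$) produces a sequence of fixed points each lying on the current strict transform of $\bar\ell$, so each $Q(a,b^{[m]})$ and $Q(b,a^{[m]})$ is linearly proximate to $Q$ with respect to $\ell$.

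The main (and only real) obstacle is the verification in the inductive step that the index of the vector replaced when blowing up $Q(a,b^{[p]})$ is exactly $b$; once this combinatorial book-keeping of the successive bases $\B_{a,b^{[p]}}$ is set up, the rest of the argument is a direct induction. The criterion in Proposition~\ref{prop:critprox} then provides a useful sanity check, since the codes $(a,b^{[m]})$ and $(b,a^{[m]})$ satisfy the proximity criterion, confirming that linear proximity with respect to $\ell$ refines the proximity relation $R\prox Q$ as remarked immediately before the statement.
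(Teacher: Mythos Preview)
Your proposal is correct and follows essentially the same approach as the paper: both arguments identify the wall $\omega_{ab}$ corresponding to $\ell$ and track which maximal cones obtained by elementary subdivisions contain this wall as a face, concluding that these are precisely the cones $\De_{a,b^{[m]}}$ and $\De_{b,a^{[m]}}$. The paper compresses this into two sentences, while you spell out the inductive bookkeeping of the bases $\B_{a,b^{[p]}}$ explicitly; your key observation that positions $a$ and $k\ne a,b$ of $\B_{a,b^{[p]}}$ remain $u$ and $v_k$ throughout is exactly what makes the paper's claim go through.
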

\begin{proof}
The wall running between the cones corresponding to $Q(a)$ and $Q(b)$ is the cone
corresponding to the line defined by this two points in $B_Q$. The only maximal cones,
obtained by elementary subdivisions, having this wall as a face are those corresponding
to the points $Q(a, b^{[m]})$ or $Q(b, a^{[m]})$ for some $m\ge 0$.
\end{proof}
In dimension two, proximity and linear proximity are equivalent. One implication
may be generalized for toric {\em chains} in any dimension.
\begin{prop}
If $\C$ is a toric chain (in any dimension), the proximity relation
determines the linear proximity relation.
\end{prop}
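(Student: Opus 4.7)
The plan is to work in the codification of toric constellations introduced earlier. Encoding the chain as $\C = \{Q_0, \ldots, Q_n\}$ with $Q_k = Q_0(a_1, \ldots, a_k)$ and $a_\ell \in \{1, \ldots, d\}$, both relations become purely combinatorial conditions on the sequence $(a_1, \ldots, a_n)$: by Proposition \ref{prop:critprox}, $Q_j \prox Q_i$ (for $i<j$) is equivalent to $a_{i+1} \notin \{a_{i+2}, \ldots, a_j\}$; by Proposition \ref{prop:critLP}, $Q_j \lprox Q_i$ is equivalent to $j = i+1$ or ($j \ge i+2$ and $a_{i+2} = a_{i+3} = \cdots = a_j \ne a_{i+1}$).

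The crux is then a simple consequence of the proximity criterion: applied to the index pair $(k-2, k)$, it yields
\[
a_{k-1} = a_k \iff Q_k \not\prox Q_{k-2}.
\]
In other words, the proximity relation, by itself, detects every coincidence between consecutive weights along the chain. Combining this with the combinatorial description of $\lprox$ above, for $j \geq i+2$ we obtain
\[
Q_j \lprox Q_i \iff Q_{i+2} \prox Q_i \;\text{ and }\; Q_{k+1} \not\prox Q_{k-1} \text{ for all } k \in \{i+2, \ldots, j-1\},
\]
while for $j = i+1$ the relation $Q_j \lprox Q_i$ holds tautologically (take $m = 0$ in Proposition \ref{prop:critLP}). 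Both clauses are phrased solely in terms of the proximity data, so the linear proximity relation is recovered from it.

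The main obstacle, if any, is only a bookkeeping one: verifying that the short cases $j = i+1$ and $j = i+2$ fit the general formula (for $j=i+2$ the conjunction reduces correctly to $Q_{i+2}\prox Q_i$, since the index range $\{i+2,\ldots,j-1\}$ is empty). The genuine content of the proof is the detector for consecutive-weight duplication displayed above, and it uses crucially that $\C$ is a chain, so that the codification is a single sequence with no branching.
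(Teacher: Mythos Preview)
Your proof is correct and follows the same strategy as the paper's: pass to the codification of the chain and use the criteria of Propositions~\ref{prop:critprox} and~\ref{prop:critLP} to express both relations as combinatorial conditions on the weight sequence $(a_1,\ldots,a_n)$, then observe that the latter condition is recoverable from the former. The only difference is in the explicit characterization you extract: the paper records that $R\lprox Q$ holds precisely when, within the segment from $Q$ to $R$, every point is proximate to $Q$ and there are no further proximities beyond the consecutive ones, whereas you use only the distance-two tests $Q_k\prox Q_{k-2}$ to detect each equality $a_{k-1}=a_k$. Your formulation is a bit more economical, but the content is the same.
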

\begin{proof}
If $R \lprox Q$, then $P \prox Q$ for any $P$ such that $R\ge P\ge Q$, $P\ne Q$,
and these are the only proximities, for the intermediate points in the chain
from $Q$ to $R$, besides the proximities of consecutive points. Conversely,
assuming this property, then $R \llprox Q$ for the line $\ell$ determined by the
point $Q^+$ and the direction $Q^{++}$ in the projective space $B_Q$, if $Q^{++}$
is defined and precedes $R$, or any line through $Q$ otherwise. Indeed, this
assumption forces the code of $R$ to be $Q(a, b^{[m]})$
for some weights $a$ and $b$, $m\ge 0$.
\end{proof}
Note that in general the linear proximity does not determine the proximity,
even for chains.
\vsd

\noindent {\em LP-Enriques diagrams.}
\vsd

We introduce now some definitions leading to the notion of the so called
(linear proximity) LP-Enriques diagrams. This is a LP generalization,
for toric constellations of dimension higher than two, of the Enriques diagrams
of two dimensional constellations. We will give later an application of these diagrams
to prove a converse Zariski theorem. \vs
Given a rooted tree $\G$, a sub graph formed by two chains with a common
root and no common edge is called a {\em bi-chain}. \vs
If $\G$ is the rooted tree associated with a toric constellation $\C$, $q$
the vertex corresponding to $Q \in \C$ and $\ell$ is a 1-dimensional
orbit in $B_Q$, then $\G_q (\ell)$ denotes the full subgraph of $\G$
with vertices corresponding to Q and to the points $R\in \C$ such that
$R \lprox Q$. Let $\G(q)$ be the family of the maximal $\G_q(\ell)$
when $\ell$ describes the set of one dimensional orbits in $B_Q$.
A vertex $q\in \G$ is called {\em simple} (resp.\ {\em ramified}) if $|q^+| = 1$
 (resp.\ if $| q^+ | > 1$).
The following properties are easily checked with the linear proximity criterion
(Proposition~\ref{prop:critLP}).
\begin{prop}  \label{prop:LPS}
Let $\C$ be a toric constellation, $\G$ the associated tree.
\begin{itemize}
\item[1.]
  \begin{itemize}
   \item[(a)] For each $q\in \G$, the family $\G(q)$ is non-empty and the elements
    of $\G(q)$ are chains or bi-chains with root $q$.
   \item[(b)] If $\g$, $\g' \in \G(q)$ and $\g \subset \g'$, then $\g = \g'$.
  \end{itemize}\vs
\item[2.]
   \begin{itemize}
   \item[(a)] Two distinct elements of $\bigcup_q \G(q)$ have at most one common edge.
   \item[(b)] Two edges with common ramification root vertex $q$ (resp.\  the edge
    with the simple root vertex $q$) belong (resp.\ belongs) to one and only one
    element of $\G(q)$.
  \end{itemize}\vs
\item[3.]
   \begin{itemize}
   \item[(a)] For each $q\in \G$ and $r\in q^+$ there is at most one vertex $s\in r^+$
    such that the chain $(q, r, s)$ is not contained in any element of $\G(q)$.
   \item[(b)] If $(p, \ldots , q, r)$ is a chain contained in a $\g \in \G(p)$ and
    $s \in r^+$ satisfies 3.(a), then the chain $(p, \ldots, q, r, s)$ is
    contained in $\g$.
   \end{itemize}
\end{itemize}
\end{prop}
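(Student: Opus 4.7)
The plan is to derive all six parts by direct application of the linear-proximity criterion (Proposition~\ref{prop:critLP}). First I would set up the following explicit description: for each $q \in \G$ corresponding to $Q \in \C$, a one-dimensional $T$-orbit $\ell \subset B_Q$ corresponds bijectively to an unordered pair of distinct weights $\{a, b\} \subset \{1, \ldots, d\}$ (the two $T$-fixed points on $\bar\ell$ being $Q(a)$ and $Q(b)$). By the criterion, the vertices of $\G_q(\ell) \setminus \{q\}$ are exactly those elements of $\C$ whose code relative to $q$ has the form $q(a, b^{[m]})$ or $q(b, a^{[m]})$ for some $m \ge 0$. Hence $\G_q(\ell)$ is obtained by gluing at $q$ two straight chains with alternating-then-constant weight patterns $(a, b, b, \ldots)$ and $(b, a, a, \ldots)$, giving a chain if one branch is empty or a bi-chain rooted at $q$ otherwise.

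With this description, item 1(a) follows at once: since $d \ge 2$ there exist such $\ell$, so $\G(q)$ is non-empty, and every $\G_q(\ell)$ is a chain or bi-chain with root $q$; item 1(b) is tautological from the definition of $\G(q)$ as the family of maximal $\G_q(\ell)$.

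For item 2(a), assume first $\g_1, \g_2 \in \G(q)$ are distinct, corresponding to pairs $\{a,b\} \ne \{a',b'\}$. Any edge of $\g_i$ at level $\ge 1$ has endpoints whose codes fully encode the pair $\{a,b\}$ by the criterion, so a shared edge at level $\ge 1$ would force equality of the two pairs; hence only initial edges $q \to q(c)$ can be shared, and only for $c \in \{a,b\} \cap \{a',b'\}$, which has at most one element. If instead $\g_i \in \G(q_i)$ with $q_1 \ne q_2$, say $q_2 \succ q_1$, one examines the code constraints forced by a common edge being on linear-proximity branches from both roots: writing $q_2 = q_1(c_1, \ldots, c_m)$ and applying the criterion at both origins, the shared endpoints must have codes simultaneously of alternating-then-constant form from both $q_1$ and $q_2$, which pins down the pair at each root and forces again at most one common edge. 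Item 2(b) is immediate: two prescribed initial edges at $q$ with weights $a, b$ determine the pair $\{a,b\}$ and hence the line $\ell$ joining $Q(a)$ and $Q(b)$, so exactly one $\G_q(\ell)$ contains both (or the single initial edge of a simple vertex).

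Finally, for item 3(a), if $r = q(a) \in q^+$ and $s = r(c) \in r^+$, then $(q, r, s) \subset \G_q(\{a, c\})$ precisely when $c \ne a$, so the only exceptional $s$ is $s = r(a)$, which is unique by the $d$-weighting rule that edges leaving $r$ carry distinct weights. For item 3(b), supposing $(p, \ldots, q, r) \subset \g = \G_p(\{a', b'\})$ with $q \ne p$ and, say, $r = p(a', b'^{[k]})$ on the branch $p, p(a'), p(a',b'), \ldots$ of $\g$, the incoming weight at $r$ equals $b'$, so the exceptional $s$ identified in 3(a) is $r(b') = p(a', b'^{[k+1]})$, which is precisely the next vertex on the same branch of $\g$; thus $(p, \ldots, q, r, s) \subset \g$. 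The main bookkeeping hurdle is the cross-root analysis in 2(a), where one must reconcile alternating-then-constant weight patterns relative to two different origins; the remaining parts are essentially unpacking the criterion.
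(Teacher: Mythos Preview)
Your proposal is correct and follows exactly the approach the paper indicates: the paper's own ``proof'' is the single sentence that these properties ``are easily checked with the linear proximity criterion (Proposition~\ref{prop:critLP}),'' and your argument supplies precisely those checks by unpacking the code patterns $q(a,b^{[m]})$ and $q(b,a^{[m]})$ from that criterion. The only place your write-up is thin is the cross-root subcase of 2(a), but the bookkeeping you allude to (forcing $q_2$ to be an endpoint of any common edge, with at most one child of $q_2$ lying on the relevant branch of $\g_1$) goes through just as you suggest.
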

\begin{defn}
The {\em LP-Enriques diagram} of a toric constellation $\C$ is the associated graph
$\GC$ equipped with the linear proximity structure formed by the family of full subgraphs
$\{\GC(q) \st q\in \GC\}$.
\end{defn}
We obtain a characterization of LP-Enriques diagrams and the minimum dimension
for a toric constellation with a given LP-Enriques diagram ( \cite{GP}, \cite{G-S}).
\begin{thm} \label{thm:DPL}
The couple $(\G, \{ \G(q) \st q\in \G\})$, given by a tree $\G$ and a family of full
subgraphs $\G(q)$, is the LP-Enriques diagram of a toric constellation $\C$ if and
only if the properties $1$, $2$ and $3$ hold.\vs
The minimum dimension of the constellations with given LP-Enriques diagram
is $d_{\PP\LL} = max( 2, max_{q\in \G} ( |q^+| + n_q ))$, where
$n_q=max_{r\in q^+} \#\{\g \in \G(q) \st r\in \g$ and $\g$ is a chain of length $> 1 \}$
\end{thm}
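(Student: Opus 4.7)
The necessity of conditions (1), (2), (3) is exactly Proposition~\ref{prop:LPS}, so my task is to prove the converse together with the dimension formula. I would argue by induction on the number of vertices $|\V(\G)|$, in close parallel with the proof of Theorem~\ref{thm:PEd}. The base case $|\V(\G)|=1$ is trivial in any dimension $d\ge 2$. For the inductive step I would pick a leaf $r$ of $\G$, call its parent $q$, and form $\G':=\G\setminus\{r\}$ equipped with the induced family $\G'(q''):=\{\gamma\setminus\{r\}\st\gamma\in\G(q'')\}$, discarding any element that ceases to be maximal. The first sub-task is to verify that this restricted data still satisfies (1), (2), (3); since removing a leaf can only shorten chains or bi-chains by one edge and cannot create new containments, this is essentially bookkeeping. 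Induction then produces a toric constellation $\C'$ realizing $\G'$ in some dimension $d_{\PP\LL}(\G')\le d_{\PP\LL}(\G)$, a monotonicity that is verified case by case from the very definition of $n_q$.

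The heart of the argument is then to add a point $R=Q(a_r)$ to $\C'$ by choosing a suitable direction $a_r\in\{1,\ldots,d\}$. Since $r$ is a leaf, Proposition~\ref{prop:LPS}~(2.b) pins down a unique $\gamma^*\in\G(q)$ carrying the edge $(q,r)$, and $\gamma^*$ can only be: (A) the trivial chain $(q,r)$, which forces $r$ to be the unique child of $q$, or (B) a bi-chain whose $r$-branch is $(q,r)$, paired with a branch $(q,r',s',\ldots)$ of length $\ge 1$. In case (A) and in case (B) with both branches of length one, $a_r$ may be chosen freely among directions not yet used by siblings of $r$ in $\C'$. In case (B) with a long partner branch, the linear-proximity criterion of Proposition~\ref{prop:critLP} forces $a_r$ to coincide with the ``second coordinate'' $b$ already used by the $r'$-chain in $\C'$: adding $R$ at direction $b$ promotes a previously fresh non-sibling direction into a sibling direction, upgrading the chain $\gamma^*\setminus\{r\}$ of $\G'(q)$ into the bi-chain $\gamma^*$ of $\G(q)$. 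The additional constraints on $a_r$ coming from $\G(q'')$ at strict ancestors $q''$ of $q$ are automatically compatible, thanks to property (3), which forces the line-structure to propagate coherently along chains.

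For the dimension count, at each vertex $q$ the $|q^+|$ children occupy $|q^+|$ distinct fixed points of $B_Q\cong\bP^{d-1}$, and each chain $\gamma\in\G(q)$ of length $>1$ through a fixed child $r_j$ demands, via Proposition~\ref{prop:critLP}, one additional fresh direction distinct from every child direction (otherwise $\gamma$ would be part of a bi-chain). These fresh directions must be pairwise distinct for chains through the same $r_j$, but may be shared across different children, so the local requirement is $d\ge|q^+|+n_q$. Taking the maximum over all $q$ and imposing the toric minimum $d\ge 2$ yields the formula for $d_{\PP\LL}$, and the induction above shows the bound is achieved. The main technical obstacle I anticipate is twofold: checking that the excised family $\G'(q'')$ is still a bona-fide LP-structure (i.e., still satisfies maximality and the partition statement (2.b)), and verifying that the direction $a_r$ dictated by $\G(q)$ is genuinely compatible with whatever is dictated by the LP-structure at each strict ancestor of $q$. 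In both places property (3) is the crucial combinatorial ingredient that makes the whole construction consistent.
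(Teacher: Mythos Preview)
The paper does not actually supply a proof of this theorem; immediately before the statement it refers the reader to \cite{GP} and \cite{G-S} for the argument, and no proof environment follows. So there is no in-paper proof to compare against, and your inductive strategy, modelled on the proof of Theorem~\ref{thm:PEd}, is a reasonable attempt at reconstructing one.

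There is, however, a genuine gap in your inductive step. You claim that property~2(b) singles out a \emph{unique} $\gamma^{*}\in\G(q)$ containing the edge $(q,r)$. This is only correct when $q$ is simple. When $q$ is ramified, property~2(b) asserts that each \emph{pair} of edges emanating from $q$ lies in a unique element of $\G(q)$; hence the single edge $(q,r)$ lies in one bi-chain $\gamma_i$ for every sibling $r_i$ of $r$. Each such $\gamma_i$ imposes its own constraint on the direction $a_r$: if the $r_i$-branch of $\gamma_i$ is $(q,r_i,s_i,\ldots)$ and $s_i$ has already been placed at direction $c_i$ from $r_i$ inside $\C'$, then Proposition~\ref{prop:critLP} forces $a_r=c_i$. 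Nothing in your argument guarantees that the values $c_i$ coming from different siblings coincide, and for an \emph{arbitrary} realization of $\G'$ they need not. Thus the naive induction ``remove a leaf, realize $\G'$, then reinsert $r$'' can fail: the intermediate realization of $\G'$ may have used incompatible direction labels at the grandchildren of $q$.

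A correct proof must either carry a stronger inductive hypothesis (for instance, fixing in advance a coherent $d$-weighting of the whole tree $\G$ compatible with the LP-structure, which is essentially what the references \cite{GP,G-S} do) or construct the weighting globally rather than one leaf at a time. Your closing paragraph does flag ``compatibility of the direction $a_r$ with what is dictated at ancestors'' as the main technical obstacle, but the decisive conflict is horizontal, among the siblings of $r$ at the \emph{same} level, and this is precisely the point at which the argument as written breaks down. The dimension count you give is heuristically correct, but it too presupposes that the direction labels can be chosen consistently across children, which is the very point left open.
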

\begin{rem}
A LP-Enriques diagram may be induced by two non-isomorphic constellations.
In some cases, for instance if for each vertex q the family $\G(q)$ has only
bi-chains or is reduced to the vertex, then the constellation inducing the
given LP-Enriques diagram is unique (up to isomorphism of constellations),
and its dimension is $|q_0^+|$ if $q_0$ denotes the root.
The maximum possible linear proximity dimension $d_{LP}$ of a fixed tree, by changing
its LP structure, is the number of edges. In this case all the chains (resp. bi-chains)
have only one edge (resp. two edges) or are reduced to a vertex, for the maximal ones.
\end{rem}
\vsd

\noindent {\em Characteristic cones of toric constellations}.
\vsd

For toric constellations the characteristic cone may be explicitly obtained
(see~\cite{CGL}, theorem 2.10). Note that in this case the characteristic cone
coincides with the semiample cone (see~\cite{TE}, page 47).
The natural ideals to consider are the invariant ideals for the toric
action, so that the constellations of base points are toric.
The conditions that such an ideal $\I$ is finitely supported and complete
are formulated in terms of the Newton polyhedron $\N$ of $\I$ relative to the local system
of parameters of the local ring, induced by a basis of the lattice where
the fan lives.
The first condition is that the fan associated to the Newton polyhedron
(which gives the normalized blowing-up of center $\I$) admits a regular
subdivision obtained by elementary subdivisions of the regular cone $\De$ corresponding to
$Q_0$; and the second  one is that every monomial corresponding to an integral
point of $\N + \De^\vee$ is in $\I$, where $\De^\vee$ denotes the dual cone
of $\De$.\vs
The following result generalizes, for toric constellations in any dimension,
the two dimensional proximity inequalities found by Enriques.
Recall Proposition~\ref{prop:critLP}.
\begin{thm}  \label{thm:tcc}
Let $\C$ be a toric constellation of dimension $d$.\\
\noindent The characteristic cone
associated with $\C$ is the cone generated by the classes of
the divisors \hs $D_{\um} = \sum_{Q \in \C} m_Q E^*_Q$ \hs such that $\um$ verifies
the linear proximity inequalities \hs $m_Q \ge \sum_{P \llprox Q} m_P$ \hs
for each $Q \in \C$ and each $\ell = \ell (Q(a), Q(b))$, $a\ne b$ $1\le a\le d$,
$1\le b\le d$.\vs
\end{thm}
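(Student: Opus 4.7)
The plan is to exploit the toric identification of the relative characteristic cone with the relative semiample (equivalently nef) cone on a smooth toric variety (\cite{TE}, p.~47), reducing the statement to a relative Kleiman-type criterion: $D_{\um} \in \tilde{P}(S_\C/S)$ iff $(-D_{\um}) \cdot V(\tau) \geq 0$ for every $T$-invariant irreducible curve $V(\tau)$ contracted by $\sigma_\C$ to $Q_0$ (here using that in the toric case the relative cone of curves is generated by $T$-invariant contracted curves). The goal is then to show that this system of inequalities is exactly the system of linear proximity inequalities in the statement.

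Next, I describe the contracted $T$-invariant curves combinatorially. A curve $V(\tau)$ is contracted by $\sigma_\C$ to $Q_0$ iff the wall $\tau \in \Si_\C$ lies in the interior of $\De$, i.e.\ all its primitive generators are new rays $u_R$ with $R \in \C$. A bookkeeping of the sequence of elementary subdivisions producing $\Si_\C$ shows that every such $\tau$ has a unique point $Q \in \C$ of maximal index with $u_Q$ among its generators, and that $\tau$ is then the wall, first created in $\Si_{Q+1}$, dual to a unique one-dimensional $T$-orbit $\ell = \ell(Q(a), Q(b)) \subset B_Q$. The curve $V(\tau)$ is the strict transform of $\ell$ in $S_\C$, yielding a bijection between contracted $T$-invariant curves and pairs $(Q, \ell)$ as in the statement.

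The heart of the proof is the intersection calculation $E_P^* \cdot V(\tau)$ for $\tau \leftrightarrow (Q, \ell)$. Using the smooth toric formula $D_v \cdot V(\tau) = D_{v'} \cdot V(\tau) = 1$ and $D_{w_i} \cdot V(\tau) = -c_i$ (where $v, v'$ are the rays of the two maximal cones adjacent to $\tau$ that are not in $\tau$, and $v + v' = \sum c_i w_i$ is the unique integral relation on the rays $\{w_i\}$ of $\tau$), combined with the expansion $E_P^* = E_P + \sum_{R \prox P} E_R^*$ in $S_\C$, I expect to show by induction on the position of $P$ in the blow-up sequence that
\begin{equation*}
E_P^* \cdot V(\tau) \;=\; \begin{cases} -1 & \text{if } P = Q, \\ +1 & \text{if } P \llprox Q \text{ with respect to } \ell, \\ 0 & \text{otherwise.} \end{cases}
\end{equation*}
By Proposition~\ref{prop:critLP}, the ``$+1$'' contributions come precisely from the points $Q(a, b^{[m]})$ and $Q(b, a^{[m]})$, so summing gives $(-D_{\um}) \cdot V(\tau) = m_Q - \sum_{P \llprox Q \text{ w.r.t.\ } \ell} m_P$, nonnegative iff the linear proximity inequality at $(Q, \ell)$ holds. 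The main obstacle is the intersection calculation itself: extracting the coefficients $c_i$ at each wall and matching the ``$+1$'' contributions stage-by-stage with the linearly proximate points predicted by Proposition~\ref{prop:critLP} requires a careful inductive argument threading through the sequence of elementary subdivisions codifying $\C$.
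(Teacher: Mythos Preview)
Your strategy coincides with the paper's: identify $\tilde P(S_\C/S)$ with the relative nef cone via the toric equality (cf.~\cite{TE}, p.~47), use that the closures $\bar\ell$ of the one-dimensional $T$-orbits generate the relative cone of curves $NE(S_\C/S)$, and then recognize $(-D_{\um})\cdot\bar\ell\ge 0$ as the linear proximity inequality at $(Q,\ell)$ --- the paper asserts this last equivalence in one line, and your wall-relation computation is the standard way to unpack it.

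One small correction: an interior wall $\tau$ of $\Sigma_\C$ need \emph{not} have all its primitive generators among the new rays $u_R$; already after a single blow-up in dimension $d\ge 3$ each interior wall $\Delta_a\cap\Delta_b$ is generated by $u_{Q_0}$ together with the $d-2$ original rays $v_k$, $k\ne a,b$. The correct condition is simply that $\tau$ is not a face of $\Delta$ (equivalently, at least one generator is a new ray), and with this fix your bijection between interior walls and pairs $(Q,\ell)$ and the subsequent intersection formula go through unchanged.
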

\begin{proof}
The linear proximity inequalities are necessary, since they are equivalent
to $(D_{\um} \cdot \bar\ell) \le 0$ for a semiample divisor $-D_{\um}$ and the
closure $\bar\ell$ of each one dimensional orbit $\ell (Q(a), Q(b))$.
Conversely, if these inequalities hold, then $-D_{\um}$ is semiample since the classes
of the closures of the one dimensional orbits generate the cone of the
numerically effective curves $NE$, and then the divisor is $\s$-generated because
$\s$ is a toric morphism.
\end{proof}
\begin{rem}
$ $
\begin{itemize}
\item[(1)] A constructive proof giving the Newton polyhedron of the unique
complete ideal associated to such a divisor $D_{\um}$ (or the
corresponding {\em idealistic cluster}) is presented in~\cite{CGL}
theorem 2.10 (ii).
\item[(2)] From Theorem \ref{thm:tcc} and Proposition \ref{proplipman}
one can characterize, in numerical terms, which toric clusters
correspond to adjoint ideals of finitely supported ideals. In fact,
such toric clusters $(\C,\underline{m}')$ are exactly those such
that the toric cluster given by $(\C, \underline{m})$, where
$m_Q=m'_Q+(d-1)$, satisfies the conditions of Theorem \ref{thm:tcc}.
\item[(3)]  Theorem \ref{thm:tcc} has been recently used by A.
Lemahieu and W. Veys in \cite{lem} to describe the zeta functions
for non degenerated hypersurfaces with respect to 3-dimensional
toric clusters and prove the monodromy conjecture for them.
\end{itemize}
\end{rem}
\begin{cor}  \label{cor:tch}
We keep the notations of the theorem. Let $\C=\{Q_0, \ldots , Q_n\}$ be a
toric {\em chain}.
\begin{itemize}
\item[(a)] The characteristic cone associated with $\C$ is given by
\begin{center}
 $m_i \ge \sum_{j \lprox i} m_j$ , {\small $0\le i \le n$}.
\end{center}
\item[(b)] The divisor $D_n = \sum_{0\le i \le n} m_{i,n} E^*_i$ associated
to the special $*$-simple ideal $\PP_{Q_n}$ is given by  $m_{n,n} =
1$, $m_{i,n} = \sum_{j \lprox i} m_{j,n}$, for {\small $0\le i\le
n$}.

\end{itemize}
\end{cor}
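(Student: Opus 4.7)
To prove Corollary \ref{cor:tch}, I would deduce both parts from the general Theorem \ref{thm:tcc} by exploiting the drastic simplification of linear proximity in a chain.

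For part (a), the plan is to verify that, in a toric chain, the family of inequalities indexed by all pairs $(Q,\ell)$ in Theorem \ref{thm:tcc} collapses to the single inequality $m_i \ge \sum_{j \lprox i} m_j$ at each $Q_i$. The key input is the proposition stating that, for a toric chain, the proximity relation determines the linear proximity relation, together with the linear proximity criterion (Proposition \ref{prop:critLP}): together, they force every point of $\C$ that is linearly proximate to $Q_i$ to be linearly proximate with respect to one single distinguished line $\ell \subset B_{Q_i}$ (the one through $Q_{i+1}$ in the direction of $Q_{i+2}$, if this is defined, and any line through $Q_{i+1}$ otherwise). For every other one-dimensional $T$-orbit in $B_{Q_i}$ the corresponding sum in Theorem \ref{thm:tcc} is empty, so only the redundant inequality $m_i \ge 0$ is produced; this is implied inductively from the end of the chain by the non-trivial inequalities, starting at $Q_n$ where the sum is empty.

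For part (b), I would combine the definition of $\PP_{Q_n}$ with Theorem \ref{thm:tcc} and part (a). By construction, $\PP_{Q_n}$ is the unique $*$-simple complete ideal whose cluster has base constellation $\C^{Q_n}=\C$, weight $1$ at $Q_n$, and weight sequence minimal for the reverse lexicographic order on $(\bZ_{\ge 0})^{n+1}$ among idealistic clusters. Since, by Proposition \ref{prop:2} combined with Theorem \ref{thm:tcc}, a toric cluster is idealistic precisely when its weights satisfy the inequalities of part (a), the task reduces to the integer minimisation: minimise $(m_0,\ldots,m_n)$ in reverse lex subject to $m_i \ge \sum_{j \lprox i} m_j$ and $m_n=1$. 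Reverse lex fixes $m_n$ first, then $m_{n-1}$, and so on; at the $i$-th step the only constraint linking the current $m_i$ to the already-fixed $m_j$ with $j>i$ is the inequality at $Q_i$, so saturating it yields the recursion $m_{i,n} = \sum_{j \lprox i} m_{j,n}$.

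The main subtlety I expect is the justification that, in a chain, all points linearly proximate to $Q_i$ lie on a single line $\ell \subset B_{Q_i}$: this is what makes the a priori unwieldy family of inequalities in Theorem \ref{thm:tcc} collapse to the clean statement of part (a), and it is the hinge on which both parts turn. Once this step is in hand via Proposition \ref{prop:critLP} and the proposition on linear proximity for chains, the remainder is essentially bookkeeping in the reverse lex order.
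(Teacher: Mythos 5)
Your proposal is correct and takes essentially the same route as the paper, which dispatches the corollary in two lines: part (a) is Theorem \ref{thm:tcc} plus the observation that a chain leaves exactly one relevant linear proximity inequality per point, and part (b) is precisely the reverse-lex saturation argument you describe (set $m_{n,n}=1$ and turn every inequality at $i\neq n$ into an equality). One small imprecision in your part (a): for a line $\ell(Q_i(a),Q_i(c))$ passing through $Q_{i+1}=Q_i(a)$ but in a non-distinguished direction $c$, the sum in Theorem \ref{thm:tcc} is \emph{not} empty --- it still contains $m_{i+1}$, since $Q_{i+1}=Q_i(a,c^{[0]})$ satisfies the criterion of Proposition \ref{prop:critLP} with $m=0$ --- so the inequality produced is $m_i\ge m_{i+1}$ rather than $m_i\ge 0$; it remains redundant, being implied by the distinguished inequality together with the nonnegativity you establish inductively from $Q_n$, so the argument goes through unchanged.
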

\begin{proof}
($a$) follows from the Theorem and the fact that for each point there is only one
relevant inequality, since $\C$ is a chain.
($b$) follows from ($a$) since the minimality property of $\um$ is
obtained if $m_{n,n}=1$ and if every inequality involving an index $i\neq n$
becomes an equality.
\end{proof}
The special $*$-simple ideals, and the exponents of the factorizations are determined
by the linear proximities:
\begin{thm}
Let $\C$ be a toric constellation.
\begin{itemize}
\item[(a)] Let $(D_Q)_{Q\in \C}$ be the basis of $N^1$ corresponding to the
special $*$-simple ideals with base points in $\C$. Then $D_Q=
\sum_{P\in \C} m_{P Q} E^*_P$, where $m_{PQ}= 0$ if $P\not\le Q$,
$m_{QQ}=1$ and $m_{PQ}=\sum_{R\in \C \st Q\ge R\lprox P} m_{RQ}$ if
$P\le Q$.

\item[(b)] Let $\bP_L=((l_{PQ}))$ be the {\em linear proximity matrix}
defined by $l_{PP}=1$, $l_{PQ}=-1$ if $P\lprox Q$ and $0$ otherwise.
Then $^{t}\bP_L$ is the basis change matrix from $(E^*_Q)$ to
$(D_Q)$.

\item[(c)] Let $\I$ be a toric finitely generated ideal with base
points in $\C$. Then the exponents of its factorization in terms of
special $*$-simple ideals are: $r_Q=m_Q-\sum_{P\lprox Q} m_P$.

\end{itemize}
\end{thm}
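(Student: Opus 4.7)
The plan is to reduce the three parts of the theorem to Corollary~\ref{cor:tch} together with a short linear algebra manipulation, proving them in this order because (b) is essentially the matrix packaging of (a), and (c) is then a direct consequence.

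For part (a), I would first note that the special $*$-simple ideal $\PP_Q$ has constellation of base points $\C^Q$, the chain of predecessors of $Q$. Consequently $D_Q$, viewed in $N^1=\bigoplus_{P\in\C}\bZ E_P^*$, has zero coefficient on $E_P^*$ whenever $P\not\leq Q$, giving the assertion $m_{PQ}=0$ outside $\C^Q$. For $P\leq Q$ one applies Corollary~\ref{cor:tch}(b) to the toric chain $\C^Q$ (whose terminal point is $Q$), obtaining exactly $m_{QQ}=1$ and the recursion $m_{PQ}=\sum_{R\lprox P}m_{RQ}$ over points $R$ of the chain. The only observation needed is that the linear proximity on $\C^Q$ coincides with the restriction of the linear proximity on $\C$, since whether $R\lprox P$ holds depends only on the sequence of blowups made up to and including $R$, and that sequence is identical in $\C^Q$ and in $\C$.

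For part (b), the matrix $M=((m_{PQ}))$ provided by (a) has as its columns the expressions of the $D_Q$ in the $E^*$-basis, so it is the change of basis matrix from $(D_Q)$ to $(E^*_P)$, and it suffices to verify $^{t}\bP_L\cdot M = I$. Its $(P,Q)$-entry equals $\sum_R l_{RP}\,m_{RQ} = m_{PQ}-\sum_{R\lprox P}m_{RQ}$. For $P=Q$ only the diagonal term survives, giving $m_{QQ}=1$, since $R\lprox P$ forces $R>P$ and hence $m_{RP}=0$. For $P<Q$ the expression vanishes by the recursion in (a), the restriction $R\leq Q$ implicit in that recursion being automatic because $m_{RQ}=0$ for $R\not\leq Q$. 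For $P\not\leq Q$ one has $m_{PQ}=0$ and, by the tree structure of $\C$, no $R$ with $R\lprox P$ can satisfy $R\leq Q$ (otherwise $P\leq R\leq Q$ would force $P\leq Q$), so $\sum_{R\lprox P}m_{RQ}=0$ as well.

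Part (c) is then an unfolding of definitions: the factorization $I=\prod_Q \PP_Q^{r_Q}$ translates, via the semigroup isomorphism of Proposition~\ref{prop:2}, into the additive identity $D(\K_I)=\sum_Q r_Q D_Q$, so $(r_Q)$ is the coordinate vector of $D(\K_I)=\sum_P m_P E_P^*$ in the basis $(D_Q)$. Applying the change of basis matrix $^{t}\bP_L$ from (b) to the column $(m_P)$ gives $r_Q=\sum_P l_{PQ}\,m_P = m_Q-\sum_{P\lprox Q}m_P$, as claimed. The main point calling for care is keeping the transpose straight in (b); beyond that, the argument is routine once (a) and Corollary~\ref{cor:tch} are in hand.
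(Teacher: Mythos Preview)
Your proposal is correct and follows precisely the same approach as the paper, which simply states that (a) follows from Corollary~\ref{cor:tch}(b) and that (b) and (c) follow from (a) and linear algebra. You have merely made explicit the routine verifications the paper leaves to the reader, including the useful remark that linear proximity on $\C^Q$ agrees with its restriction from $\C$ and the case analysis for the matrix identity in (b).
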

\begin{proof}
($a$) follows from  corollary ~\ref{cor:tch}, ($b$).
($b$) and ($c$) follow from ($a$) and linear algebra.
\end{proof}
Recall the definition of the LP structure of the tree $\G$
associated  with $\C$ (Proposition~\ref{prop:LPS}).
\begin{cor}\label{cor:23}
Let $P_\C=P(X_\C/ X)$ be the characteristic cone associated with $\C$.
The following conditions are equivalent:
$(a)$ The cone $P_\C$ is regular.
$(b)$ $(D_Q)_{Q\in \C}$ is a basis of the semigroup $P_\C \bigcap N^1$.
$(c)$ The cone $P_\C$ is simplicial.
$(d)$ The special $*$-simple factorizations have only non negative exponents.
$(e)$ For each $Q\in \C$ there is only one (maximal) chain or bichain in $\G(q)$.
\end{cor}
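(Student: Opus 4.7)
\emph{Plan.} The plan is to show that each of the five conditions is equivalent to the single equality $L_\C = P_\C$, where $L_\C$ denotes the subcone of $A^1(X_\C/X)$ generated by the classes $-D_Q$, $Q \in \C$. Since $(D_Q)_{Q \in \C}$ is a $\mathbb{Z}$-basis of $N^1$ and each $-D_Q$ is the class of the (idealistic) special $*$-simple divisor, $L_\C$ is a regular, simplicial cone of full dimension $n+1$ with $L_\C \subseteq P_\C$.

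First, I would combine the change-of-basis identity from the preceding theorem with Theorem \ref{thm:tcc} to obtain
\begin{align*}
-D_{\um} \in P_\C &\;\Leftrightarrow\; m_Q \ge \textstyle\sum_{P \llprox Q} m_P \text{ for every } Q \text{ and every } \ell, \\
-D_{\um} \in L_\C &\;\Leftrightarrow\; m_Q - \textstyle\sum_{P \lprox Q} m_P \ge 0 \text{ for every } Q.
\end{align*}
Because $\{P \st P \lprox Q\}$ is the disjoint union over the maximal chains/bi-chains $\gamma \in \G(q)$ of their vertex sets minus $\{Q\}$, these two systems of inequalities coincide precisely when each $\G(q)$ contains at most one non-trivial $\gamma$. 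This simultaneously yields (e) $\Leftrightarrow$ $L_\C = P_\C$, together with the direct reformulations (b) $\Leftrightarrow$ (d) $\Leftrightarrow$ $L_\C = P_\C$.

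For (a) $\Leftrightarrow$ $L_\C = P_\C$, one direction is clear since $L_\C$ is regular; conversely, if $P_\C$ is regular with primitive extremal $\mathbb{Z}$-basis $(w_Q)$ of $N^1$, then each $-D_Q \in P_\C$ is a non-negative integer combination of the $w_Q$. The change-of-basis matrix between the two $\mathbb{Z}$-bases $(w_Q)$ and $(-D_Q)$ of $N^1$ is then unimodular and non-negative, hence a permutation matrix, so $(-D_Q)$ are themselves the extremal generators of $P_\C$ and $L_\C = P_\C$.

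The remaining and most delicate implication is (c) $\Rightarrow$ (a), equivalently (c) $\Rightarrow$ (e). The plan is to exploit the duality with $NE(X_\C/X)$: by the proof of Theorem \ref{thm:tcc}, the classes of the closures $\bar\ell$ of one-dimensional $T$-orbits generate $NE(X_\C/X)$ and, in the toric setting, give its extremal rays, so each contributes a facet of the dual cone $P_\C$. Thus the facets of $P_\C$ are indexed by the non-trivial chains and bi-chains in $\bigcup_q \G(q)$. A simplicial cone of dimension $n+1$ has exactly $n+1$ facets, and each $q$ with descendants linearly proximate to it contributes at least one such chain or bi-chain; a counting argument then forces $|\G(q)| = 1$ for every relevant $q$, which is (e). The main obstacle is precisely the non-redundancy of these facet-defining inequalities, i.e., verifying that distinct chains/bi-chains give distinct, non-redundant facets of $P_\C$; this uses the extremality of the orbit classes in $NE(X_\C/X)$ together with the combinatorics of the LP-structure of $\G$.
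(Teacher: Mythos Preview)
Your overall plan---to route everything through the single equality $L_\C = P_\C$---is sound and matches the paper's approach in spirit, but two of your intermediate steps contain genuine errors.

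\textbf{Error 1: the permutation-matrix step.} In your argument for (a) $\Rightarrow$ $L_\C = P_\C$ you claim that a unimodular integer matrix with non-negative entries is a permutation matrix. This is false: $\left(\begin{smallmatrix}1&1\\0&1\end{smallmatrix}\right)$ is a counterexample. You only know that each $-D_Q$ is a non-negative integer combination of the $w_Q$; you do \emph{not} know the reverse, which is precisely the statement $w_Q \in L_\C$ that you are trying to prove. The paper does not attempt a direct argument here: it simply uses the trivial implication (a) $\Rightarrow$ (c) and then closes the loop via (c) $\Rightarrow$ (e) $\Rightarrow$ $L_\C = P_\C$. You should do the same.

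\textbf{Error 2: the disjoint-union claim.} In your argument for (e) $\Leftrightarrow$ $L_\C = P_\C$ you assert that $\{P \st P \lprox Q\}$ is the \emph{disjoint} union of the vertex sets (minus $Q$) of the maximal $\gamma \in \G(q)$. This fails already for $\C = \{Q, Q(1), Q(2), Q(3)\}$ in dimension $3$: the three maximal bi-chains $\{Q,Q(i),Q(j)\}$ pairwise share a vertex. In fact, any level-$1$ point $Q(a)$ lies in $\G_q(\ell(Q(a),Q(b)))$ for every $b \neq a$. The forward direction (e) $\Rightarrow$ $L_\C = P_\C$ survives (when $|\G(q)|=1$ the unique $\gamma$ necessarily contains all $P \lprox Q$, so the single $P_\C$-inequality at $Q$ \emph{is} the $L_\C$-inequality), but your argument for the converse breaks down.

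\textbf{How the paper handles this.} The paper's proof is terse: it asserts as a ``fact'' that the supporting hyperplanes of the facets of $P_\C$ are exactly those indexed by the pairs $(Q,\gamma)$ with $\gamma$ maximal in $\G(q)$---i.e., the linear-proximity inequalities are pairwise non-redundant. Granting this, the facet count gives $\#\text{facets}(P_\C) = \sum_Q |\G(q)| \geq n+1$, with equality iff (e) holds; hence (c) $\Leftrightarrow$ (e). Combined with (e) $\Rightarrow$ $L_\C = P_\C$ $\Rightarrow$ (a),(b),(d) and the trivial (a) $\Rightarrow$ (c), the circle closes. You correctly identified this non-redundancy as the crux of (c) $\Rightarrow$ (e); note that the paper simply asserts it (it follows from the fact that the $T$-invariant curve classes $\bar\ell$ generate distinct extremal rays of $NE(X_\C/X)$), and that once you have it, both of your problematic steps become unnecessary.
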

\begin{proof}
The conditions ($a$), ($b$), ($c$) and ($d$) are equivalent since the divisors $D_Q$ form a basis
of $N^1$. The  equivalence between ($e$) and ($c$) follows from the preceding theorem,
and the fact that the supporting hyperplanes of the maximal faces of the cone $P_\C$
are those associated with the maximal elements of $\G_Q$ for each $Q\in \C$.
\end{proof}
\begin{rem}  \label{rem:tch}
In particular, every toric chain constellation in any dimension has a regular
characteristic cone. There are also non-chain constellations with this property.
\end{rem}
We give now an application of the LP-Enriques diagrams for a converse Zariski
Theorem for toric constellations.
Recall the notations and results on the minimal LP-dimension  $d_{\LL \PP}$ of a
LP-Enriques diagram (Theorem~\ref{thm:DPL}).
\begin{thm}\label{teo:28}
The characteristic cone of a toric constellation is {\em regular} if and only if
its LP-Enriques diagram is induced by a two dimensional constellation.
\end{thm}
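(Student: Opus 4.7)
The plan is to bridge Corollary \ref{cor:23}, which characterizes regularity of $P_\C$ via the LP–structure at each vertex, with Theorem \ref{thm:DPL}, which computes the minimum realization dimension $d_{\LL\PP}$ of an LP-Enriques diagram. Concretely, I will show that condition $(e)$ of Corollary \ref{cor:23}—namely, that for every $Q\in\C$ the family $\G(q)$ has a unique maximal element (a chain or a bichain)—is equivalent to the numerical bound $|q^+| + n_q \le 2$ for every $q\in\G$, i.e.\ to $d_{\LL\PP} = 2$.

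For the implication ``LP-diagram induced by a two-dimensional constellation $\Rightarrow$ $P_\C$ regular'', suppose the LP-Enriques diagram of $\C$ coincides with that of some two-dimensional toric constellation $\C'$. In dimension two, the exceptional divisor $B_Q\cong\mathbb{P}^1$ contains a single one-dimensional $T$-orbit, so by definition the family $\G(q)$ associated to $\C'$ has exactly one element, which is a (possibly trivial) chain or a bichain according as $|q^+|\le 1$ or $|q^+|=2$. Since condition $(e)$ is intrinsic to the LP-diagram and the diagrams of $\C$ and $\C'$ agree, $(e)$ holds for $\C$, and Corollary \ref{cor:23} gives the regularity of $P_\C$.

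For the reverse implication, assume $P_\C$ is regular. Corollary \ref{cor:23} gives condition $(e)$: for each $Q\in\C$ the family $\G(q)$ has a unique maximal element. The key step is to analyze $|q^+|$ case by case. If $|q^+|\ge 3$, Proposition \ref{prop:LPS}(2.b) attaches each of the $\ge 3$ edges issuing from $q$ to exactly one element of $\G(q)$; since a chain or bichain can contain at most two such edges, $\G(q)$ must have at least two elements, contradicting $(e)$. Hence $|q^+|\le 2$. If $|q^+|=2$, Proposition \ref{prop:LPS}(2.b) places both edges from $q$ in $\G(q)$, and since a single chain carries at most one edge from $q$, the unique element must be a bichain; this bichain is not a chain, so $n_q=0$ and $|q^+|+n_q=2$. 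If $|q^+|=1$, the unique edge from $q$ lies in the unique element of $\G(q)$, which is a chain containing $r$, so $n_q\le 1$ and $|q^+|+n_q\le 2$. If $|q^+|=0$, trivially $|q^+|+n_q=0$. In every case $|q^+|+n_q\le 2$, whence Theorem \ref{thm:DPL} gives $d_{\LL\PP}=2$, i.e.\ the LP-Enriques diagram is induced by a two-dimensional constellation.

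The main obstacle is the bookkeeping at the case $|q^+|=2$: one must argue that condition $(e)$ forces the single maximal element of $\G(q)$ to be a bichain rather than to degenerate into two simultaneous length-one chains, for only then does the quantity $n_q$ vanish and the dimension formula yield the sharp bound $d_{\LL\PP}=2$. This is ensured by the obstruction that a chain, being linear, accommodates at most one edge out of $q$, which together with Proposition \ref{prop:LPS}(2.b) rules out the alternative consistent with $(e)$.
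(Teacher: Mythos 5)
Your proposal is correct and follows essentially the same route as the paper: both directions go through condition $(e)$ of Corollary \ref{cor:23} (uniqueness of the maximal element of each $\G(q)$) and the dimension formula of Theorem \ref{thm:DPL}, with the same case analysis $|q^+|\le 2$, $n_q=0$ when $|q^+|=2$, $n_q\le 1$ when $|q^+|\le 1$. The only cosmetic difference is that in the forward direction you verify $(e)$ directly from the uniqueness of the one-dimensional $T$-orbit in $B_Q\cong\bP^1$, where the paper simply cites Zariski's regularity of the characteristic cone in dimension two; both hinge on $(e)$ being intrinsic to the LP-diagram.
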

\begin{proof}
The characteristic cone of any two dimensional constellation is regular, by Zariski.
Conversely, assume that the characteristic cone is regular. Then $\G(q)$ has only one
element for each $q \in \G$, by the last Corollary. It follows necessarily
that $0\le |q^+|\le 2$.
Now, $0\le |q^+| \le1$ implies that $0\le n_q\le 1$ and $|q^+|=2$ implies that $n_q=0$.
It follows that the minimal dimension $d_{\LL \PP}$ of a constellation inducing
the given LP-Enriques diagram is two.
\end{proof}
In the general toric case, the characteristic cone $P_{\C}$ contains
the regular sub-cone $L_{\C}$ which is given by the inequalities
$m_Q\geq \sum_{P\lprox Q} m_P$ for $Q\in \C$. Notice that conditions
in Corollary \ref{cor:23} are also equivalent to the equalities
$\mathbb{L}_{\C}=\mathbb{E}_{\C}^{\sharp}$ or $L_{\C}=P_{\C}$.
For a non toric two dimensional constellation $\C$, Zariski's theory
shows that one also has $\mathbb{L}_{\C}=\mathbb{E}_{\C}^{\sharp}$
and that $L_{\C}=P(X_{\C}/X)=\tilde{P}(X_{\C}/X)$ is a regular cone.
For $d=2$, Zariski's theory asserts also further properties as the
following ones:
\begin{itemize}
\item[(i)] If $I\in {\mathcal J}_{\C}$ and $Q\in \C$, then the weak
transform $I_Q$ is a complete ideal; so $I_Q\in {\mathcal
J}_{\C^Q}$.
\item[(ii)] If $I,J\in {\mathcal J}_{\C}$ then $I*J=IJ$, so $IJ$ is
a complete ideal.
\item[(iii)] If $\K \in {\mathcal G}_{\C}$ is given by
$\K=(\C,\underline{m})$ then $\mu(I_{\K})=m_0+1$, where $\mu$ stands
for the minimal  number of generators of an ideal.
\end{itemize}
The following result, due to E. Tost\'on \cite{toston}, shows that
above properties are also true for toric clusters in the
sub-semigroup ${\mathcal L}{\mathcal G}_{\C}$ of the galaxy
${\mathcal G}_{\C}$ (recall the notations used at the end of Section
\ref{sect:22}).
\begin{thm}\cite[Th. 3.1, Prop 3.2, 3.4]{toston}\label{tos} Let $\C$ be a toric
constellation. Then one has:
\begin{itemize}
\item[(i)] If $I\in {\mathcal L}_{\C}$ and $Q\in \C$ then the weak
transform $I_Q$ is a complete ideal and $I_Q\in {\mathcal
L}_{\C^Q}\subseteq {\mathcal G}_{\C^Q}$.
\item[(ii)] If $I,J\in {\mathcal L}_{\C}$ then $I*J=IJ$; so $IJ$ is
a complete ideal.
\item[(iii)] If $\K\in {\mathcal L}{\mathcal G}_{\C}$ is given by
$\K=(\C,\underline{m})$ then one has
$\mu(I_{\K})=\binom{m_0+d-1}{d-1}$.
\end{itemize}
\end{thm}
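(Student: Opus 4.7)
The plan is to exploit the toric structure throughout: by Corollary~\ref{cor:23} and the surrounding discussion, ideals in ${\mathcal L}_{\C}$ are monomial (with respect to the local system of parameters induced by a basis of the lattice) and correspond to lattice points of the regular sub-cone $L_{\C}$ spanned by the divisors $D(Q_i)$. Their Newton polyhedra are explicitly governed by the linear proximity data of Theorem~\ref{thm:tcc} and Corollary~\ref{cor:tch}. This allows one to reduce each of the three statements to a combinatorial assertion about Newton polyhedra and Minkowski sums.

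For (i), proceed by induction on the level of $Q$ in $\C$, with $Q=Q_0$ trivial. The inductive step reduces, by iteration along the chain $\C^Q$, to showing that if $P$ is a point consecutive to the root then $I_P\in {\mathcal L}_{\C^P}$. Because the weak transform at a torus-fixed point of a toric ideal is again toric, and because this operation is compatible with the $*$-product of toric ideals (it corresponds to an affine-linear shift of Newton polyhedra dictated by the order subtraction), it suffices to treat the generators $\mathcal{P}_{Q_j}$ of ${\mathcal L}_{\C}$ separately. Two cases arise: if $P\not\le Q_j$ the weak transform is the unit ideal and there is nothing to prove, while if $P\le Q_j$ the Newton polyhedron of $(\mathcal{P}_{Q_j})_P$ identifies it with the analogous special $*$-simple ideal for the chain $\C^{Q_j}$ reading from $P$, which lies in ${\mathcal L}_{\C^P}$.

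For (ii), the identity $I*J=IJ$ amounts to showing that the ordinary product of two ideals in ${\mathcal L}_{\C}$ is already integrally closed. On the toric side, completeness of a monomial ideal is equivalent to the saturation condition that its monomial support equals $(\mathcal{N}+\Delta^\vee)\cap M$, where $\mathcal{N}$ is its Newton polyhedron and $M$ the character lattice. Since the Newton polyhedron of $IJ$ is the Minkowski sum $\mathcal{N}(I)+\mathcal{N}(J)$, one must verify that the sum of two such saturated polyhedra, when both summands come from ${\mathcal L}_{\C}$, is again saturated. This combinatorial stability follows from the explicit generators of the Newton polyhedra of the $\mathcal{P}_{Q_j}$ in Corollary~\ref{cor:tch} together with the regularity of the sub-cone $L_{\C}$: the supporting hyperplanes of the relevant faces are controlled by the linear proximity inequalities, and non-negative integer combinations preserve them.

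Part (iii), the count $\mu(I_{\K})=\binom{m_0+d-1}{d-1}$, is the main obstacle. The number of minimal generators equals $\dim_{\bK}(I_{\K}/M_{Q_0}I_{\K})$. Since every element of $I_{\K}$ has order at least $m_0$ at $Q_0$, one has $I_{\K}\subseteq M_{Q_0}^{m_0}$, which immediately yields the upper bound $\mu(I_{\K})\le \binom{m_0+d-1}{d-1}$. The nontrivial step is the matching lower bound, i.e.\ showing that the initial form of $I_{\K}$ at $Q_0$ fills out all of $M_{Q_0}^{m_0}/M_{Q_0}^{m_0+1}$. The plan is to write $I_{\K}=\prod^{*}\mathcal{P}_{Q_i}^{r_i}$ with $r_i\ge 0$ (using $\K\in {\mathcal L}{\mathcal G}_{\C}$ crucially) and realize $I_{\K}$ as the monomial ideal generated by the lattice points of $\mathcal{N}(I_{\K})+\Delta^\vee$; then identify the generators of degree $m_0$ with the $\binom{m_0+d-1}{d-1}$ lattice points of the corresponding simplicial facet of this polyhedron. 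The technical heart is showing that the non-negativity of the $r_i$ prevents any cancellation of lowest-order contributions, which is precisely why the sub-semigroup ${\mathcal L}{\mathcal G}_{\C}$ (rather than the full galaxy ${\mathcal G}_{\C}$) is needed; once this toric combinatorial picture is in place the count follows from the standard enumeration of monomials of degree $m_0$ in $d$ variables.
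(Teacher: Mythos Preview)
Your sketches for (i) and (ii) follow the natural Newton-polyhedron line one expects in the toric setting; the paper defers these to \cite{toston} without details, so there is nothing to compare.

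Part (iii), however, is genuinely wrong in both directions. First, the inclusion $I_{\K}\subseteq M_{Q_0}^{m_0}$ does \emph{not} yield $\mu(I_{\K})\le\binom{m_0+d-1}{d-1}$: containment in a power of the maximal ideal imposes no bound whatsoever on the minimal number of generators. In fact your argument uses only $\mathrm{ord}(I_{\K})=m_0$ and nothing about ${\mathcal L}{\mathcal G}_{\C}$, so it would apply verbatim to the paper's own counterexample just after the theorem, where $m_0=3$, $d=3$ and $\mu(I_{\K})>\binom{5}{2}=10$. Second, your lower-bound plan --- that the initial form of $I_{\K}$ fills all of $M_{Q_0}^{m_0}/M_{Q_0}^{m_0+1}$ --- is simply false, even for $\K\in{\mathcal L}{\mathcal G}_{\C}$. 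Already for $d=2$ (where ${\mathcal L}{\mathcal G}_{\C}={\mathcal G}_{\C}$) the complete ideal $I=(x^2,xy,y^3)$ has $m_0=2$ and $\mu(I)=3=\binom{3}{1}$, yet $y^2\notin I+M^3$, so the degree-$2$ piece of the initial ideal is only $\langle x^2,xy\rangle$. Neither inequality can be obtained by looking at leading forms alone.

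The paper's argument for (iii) is completely different and uses (ii) in an essential way. Since $M=M_{Q_0}={\mathcal P}_{Q_0}\in{\mathcal L}_{\C}$, part (ii) gives $M I_{\K}=M*I_{\K}=I_{\K'}$ with $\K'=(\C,(m_0+1,m_1,\dots,m_n))$; hence $\mu(I_{\K})=\dim(I_{\K}/I_{\K'})=\dim(R/I_{\K'})-\dim(R/I_{\K})$. The remaining input is that, in the toric case, the conditions of passage through an idealistic cluster are linearly independent, so the colengths are $\sum_i\binom{m_i+d-1}{d}$, and the difference collapses to $\binom{m_0+d}{d}-\binom{m_0+d-1}{d}=\binom{m_0+d-1}{d-1}$. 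Note how the hypothesis $\K\in{\mathcal L}{\mathcal G}_{\C}$ enters: it is needed precisely to invoke (ii) on the pair $(I_{\K},M)$, which is exactly where your attempted direct count goes astray.
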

Part (iii) follows from (ii) and the fact that, in the toric case,
the conditions of effective passage through an idealistic cluster
are linearly independent. For it, (ii) is applied to the ideals
$I_{\K}$ and $M$, $M$ being the maximal ideal.
\begin{rem}
$ $
\begin{itemize}
\item[(1)] If $\C$ satisfies the equivalent conditions in Corollary
\ref{cor:23} the statement of Theorem \ref{tos} is true for the
whole galaxy of $\C$, as one has ${\mathcal G}_{\C}={\mathcal
L}_{\C}$ and ${\mathcal J}_{\C}={\mathcal L}_{\C}$. Thus, Zariski's
theory is fully extended for such constellations. In particular, it
is true for toric chains. This gives an additional insight to
Theorem \ref{teo:28}.
\item[(2)] If $\C$ does not satisfy conditions in Corollary
\ref{cor:23} then the statement of Theorem \ref{tos} is not longer
true for ideals in ${\mathcal G}_{\C}\setminus {\mathcal L}_{\C}$ as
the following examples, also due to E. Tost\'on in \cite{toston},
show.
\end{itemize}
\end{rem}
\begin{exs}
Consider $d=3$, coordinates $x,y,z$ and let $\C$ be the toric
constellation consisting of two chains of respective edge weights
given by $\{1,2,2\}$ and $\{3,2,1\}$. Let $\K$ be the cluster
supported on $\C$ with $\underline{m}$-weights given by $3,1,1,1$
and $3,2,1,1$ respectively on above chains. Then one has
$$I=I_{\K}=\langle x^4, x^3y, x^2z, x^2y^2,x^2yz, xy^3,xy^2z,xz^3,
y^3, y^2z^2,yz^3,z^5\rangle.$$ Now, if $Q$ is a $0$-dimensional
$T$-orbit connected to $Q_0$ by the edge of weight 3, then one has
$(\frac{x}{z}\frac{y}{z})^2\not\in I_{Q}$ but
$(\frac{x}{z}\frac{y}{z})^2\in \overline{I}_Q$. Thus (i) is not true
for $I$. On the other hand, one has $xy^2z\in I*M$ but $xy^2z\not\in
I M$, which shows that (ii) is not true for the ideals $I$ and $M$.
Finally one has $\mu(I)=\dim(I/IM)>\dim(I/I*M)=\binom{3+2}{2}=10$,
so (iii) is again not true for $I$.
Notice that $\K$ is an idealistic cluster. However $\K$ is not in
${\mathcal L}{\mathcal G}_{\C}$ as one has
$$3=m_{Q_0}<\sum_{P\lprox Q_0} m_P=2+1+1+1+1=6.$$
Even it is not true that $I^2$ is complete when $I$ is a toric
finitely supported complete ideal. This happens, for instance, if
$d=4$ and $I=I_{\K'}$, where $\K'=(\C',\underline{m})$ is given by
the constellation $\C'$ consisting of the three chains with edge
weights $\{1,2,3\}$, $\{3,2,1\}$, $\{4,2,2\}$ and respective
$\underline{m}$-weights given by $3,1,1,1$; $3,2,1,1$; $3,1,1,1$. If
$x,y,z,w$ are the coordinates, one has $x^2y^2zw\not\in I^2$ but
$x^2y^2zw\in \overline{I}^2$.
\end{exs}
\begin{rem}
The exact conditions under which the equality
$\mu(I_{\K})=\binom{m_0+d-1}{d-1}$ is true for clusters are
investigated in homological terms in \cite{cruz}.
\end{rem}
\section{Global theory}\label{sec:3}
\subsection{Configurations, global clusters and linear systems}
\begin{defn}
{\rm A {\it configuration of infinitely near points} (configuration,
in short) is a finite union of constellations whose origins are
closed points of $X$. }
\end{defn}
If $\C$ is a configuration, as in the case of constellations,
$\sigma_{\C}:X_{\C}\rightarrow X$ will denote the composition of the
blowing-ups of all the points in $\C$; moreover two configurations
$\C$ and $\C'$ over $X$ are identified if there exist an
automorphism $\pi$ of $X$ and an isomorphism $\pi' : X_{\C}
\rightarrow X_{\C'}$ such that $\s_{\C'} \circ \pi' = \pi \circ
\s_{\C}$. Given a configuration $\C$, the relation $\geq$ and the
proximity relation $\rightarrow$ between points of $\C$ are defined
as in the case of constellations. Also, for a point $Q$ in $\C$, we
define the concepts of associated {\it constellation $\C^Q$ of
preceding points}, {\it level} of $Q$, its {\it proximity index}
$ind(Q)$ and the set of {\it consecutive} points $Q^+$ as those
referred to the maximal constellation contained in $\C$ to which $Q$
belongs. The exceptional divisors, its strict and total transforms
and the lattice of divisorial cycles with exceptional support in
$X_{\C}$ will be denoted as in the case of constellations. Also, the
proximity matrix is defined, in the same way, as the basis change
matrix from the $E_i$'s to the $E^*_j$'s. If $\C=\bigcup_{i=1}^r
\C_i$, where the $\C_i$'s are disjoint constellations, we define the
{\it P-Enriques diagram} associated to $\C$ as the disjoint union of
the P-Enriques diagrams $\Gamma_{\C_1},\ldots,\Gamma_{\C_r}$. We
also adapt in the obvious way the concept of {\it cluster} to the
global case defining it as a pair $(\C,\underline{m})$, where
$\C=\{Q_i\}_{i=0}^n$ is a configuration and
$\underline{m}=(m_0,\ldots,m_n)$ is a sequence of integers; also,
$m_i$ is called the {\it weight} or {\it virtual multiplicity} of
$Q_i$ in the cluster. As in the local case, we can associate to the
cluster the divisor with exceptional support $D(\K):=\sum_{i=0}^n
m_i E_i^*$.
\begin{defn}
{\rm A sheaf of ideals $\I$ on $X$ is said to be {\it finitely
supported} if there exists a finite set $S$ of closed points of $X$
such that, for each closed point $Q\in X$, the stalk $\I_Q$ is a
finitely supported ideal in $\OO_{X,Q}$ (resp. $\I_Q=\OO_{X,Q}$)
whenever $Q\in S$ (resp. $Q\not\in S$). This implies that there
exists a configuration $\C$ over $X$ (whose roots are the points in
$S$) such that $\I \OO_{X_{\C}}$ is an invertible sheaf. The {\it
configuration of base points} of $\I$ (denoted by $\C_{\I}$) is the
configuration with the minimal number of points having this
property, that is, $\C_{\I}=\bigcup_{Q\in S} \C_{{\I}_Q}$.}
\end{defn}
Given a finitely supported ideal sheaf $\I$,
its {\it cluster of base points} $\K_{\I}$ will  be the cluster
$(\C_{\I},\underline{m})$ where $\C_{\I}=\{Q_0,\ldots,Q_n\}$ is
the above defined configuration of base points and,
for $0\leq i\leq n$, $m_i$ is the weight of $Q_i$ in the cluster
$\K_{{\I}_O}$, $O\in X$ being the image of $Q_i$ by
$\sigma_{\C_{\I}}$.
Notice that Definition \ref{proper} makes sense also for
configurations. Also, given a cluster $\K=(\C,\underline{m})$ and
given a non-empty complete linear system $|R|$ on $X$, the set of
elements of $|R|$ passing through $\K$ form a linear system
${\mathcal L}_{|R|}(\K)$ on $X$ whose elements are in one to one
correspondence with the complete linear system on $X_{\C}$ given by
$|\sigma_{\C}^*R-D(\K)|$. The correspondence is given by the
assignation, to each effective divisor $D\in |R|$, of its virtual
transform on $X_{\C}$ (defined in the same way as in Definition \ref{proper}).
Also, fixed a non-empty linear system $\varrho\subseteq |R|$, one
can consider the ideal sheaf $\I(\varrho)$ whose stalks at the
points of $X$ are generated by the local equations of the divisors
in $\varrho$. This ideal sheaf defines a closed sub-scheme of $X$,
called the {\it base point scheme} of $\varrho$. Notice that
$\I(\varrho)$ may not be finitely supported (resp. complete).
The determination of the dimension of a linear system of
hypersurfaces passing through a cluster is a very classical problem
in algebraic geometry, mainly considered when the configuration $\C$
consists of a set of (proper) points on a projective space $\bP^r$
in general position. It has been present in the works of B\'ezout,
Pl\"ucker, Cremona, M. Noether, Bertini, C. Segre, Castelnuovo,
Enriques, Severi and, more recently, Alexander, Hirschowitz,
Ciliberto, Miranda, Harbourne, among many others. There are several
conjectures and open questions on this problem and it is related
to other topics like the 14-th problem of Hilbert \cite{Na}, the
symplectic packing problem \cite{Mc,Xu} or the Waring's problem in
number theory (see Section 7 of \cite{cil3}). For interesting
surveys on this subject we refer the reader to \cite{Gim},
\cite{cil3}, \cite{Mir} or \cite{harb3}. However, we shall return to
it later, but focusing our attention on the case of linear systems
of plane curves.

\subsection{Cones of curves of rational surfaces and P-sufficient configurations}\label{sect32}
As we shall see along this paper, techniques related to the cone
of curves of a projective regular rational surface have
fruitful applications in several problems in Algebraic Geometry. So,
we include here a brief exposition of the basic properties of the
cone of curves, and also we recall the notion, introduced in
\cite{galmon} and \cite{galmon3}, of {\it P-sufficient
configuration} (over a relatively minimal rational surface). This
concept depends only on the P-Enriques diagram of the configuration
and it implies the polyhedrality of the cone of curves associated
with the surface obtained by blowing-up the points of the
configuration. The obtention of conditions implying the
polyhedrality of the cone of curves is an interesting issue, as we
shall see in the applications.
Let $Z$ be a regular projective rational surface and consider its
cone of curves $NE(Z)\subseteq A_1(Z)$ and its closure with respect
to the real topology, denoted by $\overline{NE}(Z)$. Notice that, in
this case, we can identify the spaces $A_1(Z)$ and $A^1(Z)$; we
shall denote them by $A(Z)$. We shall assume that $\dim A(Z)\geq 3$
(otherwise the cone of curves is regular).
 Recall that, if $C$ is a convex cone
of $A(Z)$, a {\it face} of $C$ is a sub-cone $F\subseteq C$ such
that $a+b\in F$ implies that $a,b\in F$, for all pair of elements
$a,b\in C$. The $1$-dimensional faces of $C$ are the {\it extremal
rays} of $C$.
Fix an ample divisor $H$ on $Z$. By Kleiman's ampleness criterion
\cite{K}, $H\cdot x>0$ for all $x\in \overline{NE}(Z)\setminus
\{0\}$ and, hence, the cone $\overline{NE}(Z)$ is strongly convex.
This implies that it is generated by its extremal rays. Consider the
cone
$$Q(Z)=\{x\in A(Z)\mid x^2\geq 0,\;\; H\cdot x\geq 0\}.$$
By the Hodge index theorem \cite[V.1.9]{har} there exists a basis of
$A(Z)$ for which the intersection bilinear form on $A(Z)$ is given
by the diagonal matrix ${\rm diag}(1,-1,\ldots,-1)$ in such a way
that $Q(Z)$ is defined by an inequality of the type $x_1\geq
(\sum_{i=2}^{\rho(Z)} x_i^2)^{1/2}$ in the suitable coordinates.
Then, $Q(Z)$ is the half-cone over an Euclidean ball of dimension
$\rho(Z)-1$, which is strictly convex. One has that $Q(Z)\subseteq
\overline{NE}(Z)$ \cite[II.4.12.1]{kollar1} and, therefore, the
extremal rays of $\overline{NE}(Z)$ must be spanned by elements
$x\in A(Z)$ such that $x^2\leq 0$. The extremal rays of
$\overline{NE}(Z)$ which are not in $Q(Z)$ are spanned by classes of
integral curves $C$ with $C^2<0$ \cite[II.4.12.3]{kollar1}.
Moreover, if $C$ is an integral curve on $Z$ such that $C^2<0$ then
$C$ generates an extremal ray of $\overline{NE}(Z)$
\cite[II.4.12.2]{kollar1}. The extremal rays of ${NE}(Z)$ generated
by elements $x$ such that $K_Z\cdot x<0$ ($K_Z$ being a canonical
divisor on $Z$) are known as a consequence of the Mori cone theorem
(see \cite[III.1]{kollar1} for instance): they are exactly those
spanned by the images in $A(Z)$ of the $(-1)$-curves (that is,
integral regular rational curves whose self-intersection is equal to
$-1$); furthermore, if there are infinitely many $(-1)$-curves, the
accumulation points of the set of generated extremal rays must be on
the orthogonal hyperplane  to the canonical class, $K_Z^{\perp}$.
However, very little is known concerning the region $NE(Z)\cap (K_Z
\cdot x\geq 0)$. From the classical theory of surfaces, it is
well-known that $Z$ can be obtained by blowing-up the points of a
configuration $\C$ over a relatively minimal rational surface $X$,
that can be either the projective plane $\bP^2$ of a Hirzebruch
surface $\mathbb{F}_a:=\bP(\OO_{\bP^1}\oplus \OO_{\bP^1}(a))$, $a$
being a non negative integer, $a\not=1$. We fix, from now on, both
$\C$ and $X$ such that $Z=X_{\C}$. When $K_Z^2\geq 0$ we have the
following results:
\begin{itemize}
\item[(i)] If $K_Z^2> 0$ then $NE(Z)$ is polyhedral (see
\cite{manin} and \cite[Cor. 1(i)]{galmon} for the case in which $X$
is the projective plane, and \cite[Th. 2(a)]{galmon3} for the
general case). Notice that this happens if and only if the
cardinality of $\C$ is $\leq 8$ (resp. $7$), whenever $X=\bP^2$
(resp. $X$ is a Hirzebruch surface).
\item[(ii)] If $K^2_Z=0$, then either $NE(Z)$ is polyhedral, or the
set of extremal rays of $\overline{NE}(Z)$ has a unique accumulation
point, which is spanned by $-K_Z$ (see \cite[Cor. 1(ii)]{galmon} and
\cite[Th. 2(b)]{galmon3}). Notice that $K_Z^2=0$ if and only if  the
cardinality of $\C$ is $9$ (resp. $8$), whenever $X=\bP^2$ (resp.
$X$ is a Hirzebruch surface).
\item[(iii)] If $K^2_Z=0$ and $K_Z\cdot D>0$ for some effective
divisor $D$ on $Z$, then $NE(Z)$ is polyhedral \cite[Th.
2(c)]{galmon3}.
\end{itemize}
The cone of curves is not, in general, polyhedral. For instance, in
\cite[Example 4.3]{CG} it is provided an example of a chain
constellation of 9 points over $X=\bP^2$ such that the surface
$X_{\C}$ has infinitely many $(-1)$-curves and, therefore, it has
infinitely many extremal rays. The following result provides
conditions for the polyhedrality and regularity of $NE(Z)$ and the
characteristic cone $\tilde{P}(X)$ in terms of the existence of
curves passing through a certain cluster.
\begin{prop}\cite[Sect. 2.2.2]{galmon3}\label{conic}
Set $\C=\{Q_0,\ldots,Q_n\}$ and consider the cluster
$\K=(\C,\underline{m})$, where $m_i=1$, $0\leq i\leq n$. Assuming
that $X=\bP^2$ the following properties hold:
\begin{itemize}
\item[(a)] If there exists a line passing through $\K$ then the cones
$NE(Z)$ and $P(Z)$ are regular, $NE(Z)$ being generated by
$E_0,\ldots,E_n$ and the image in $A(Z)$ of the strict transform of
the line.
\item[(b)] If there exists a conic $C$ passing through $\K$, then
$NE(Z)$ is a polyhedral cone generated by $E_0, \ldots, E_n$, the
images in $A(Z)$ of the strict transforms of the lines passing
through two points in $\C$, and the image of the virtual transform
of $C$ in $Z$ with respect to $\K$. Moreover $\tilde{P}(Z)$ is a
closed cone.
\item[(c)] If there exists a conic passing through $\K$ and $n\leq 3$ then
$NE(Z)$ and $\tilde{P}(Z)$ are regular cones.
\item[(d)] if $n\geq 4$ and there exists an integral conic such
that its successive strict transforms pass through $Q_0,\ldots,Q_4$,
then  $\tilde{P}(Z)$ is not simplicial.
\end{itemize}
Assuming that $X$ is a Hirzebruch surface $\mathbb{F}_a$, $NE(Z)$ is
polyhedral whenever a curve in the linear system $|(1-a)F+2M|$ pass
through $\K$, $F$ being a fiber of the natural morphism
$\mathbb{F}_a\rightarrow \bP^1$ and $M$ being the divisor of zeros
of a non-trivial global section of $\OO_{\mathbb{F}_a}(1)$.
\end{prop}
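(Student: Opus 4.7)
The plan is to establish parts (a)--(d) and the Hirzebruch case by intersection-theoretic computations on $Z = X_\C$ combined with the characterization of extremal rays of the Mori cone via irreducible curves of non-positive self-intersection. When $X = \bP^2$, work in the orthogonal basis $(L, E_0^*, \ldots, E_n^*)$ of $A(Z)$ with intersection form $\mathrm{diag}(1, -1, \ldots, -1)$; the alternative basis $(\tilde L, E_0, \ldots, E_n)$ of strict transforms (with $\tilde L = L - \sum_i E_i^*$) is related to it by the unimodular proximity matrix (Remark~2.1), so both span the same $\bZ$-lattice. The virtual transform of any plane curve of degree $\delta$ passing through $\K$ (all $m_i = 1$) has class $\delta L - \sum_i E_i^*$ and self-intersection $\delta^2 - (n+1)$.

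For (a), the strict transform $\tilde L$ of a line through $\K$ has self-intersection $-n$, hence it is an irreducible curve spanning an extremal ray of $NE(Z)$. To show $NE(Z) = \bR_{\geq 0}\tilde L + \sum_i \bR_{\geq 0} E_i$, take any integral curve $D$ distinct from $\tilde L$ and the $E_i$'s; then $D \cdot \tilde L \geq 0$ and $D \cdot E_i \geq 0$ for every $i$. Writing $D = c \tilde L + \sum_i a_i E_i$ and using the intersection numbers among basis elements (derived from the proximity relations and $E_i = E_i^* - \sum_{j \prox i} E_j^*$), the resulting system of inequalities forces $c, a_i \geq 0$. Regularity of $NE(Z)$ follows from the lattice basis property; regularity of $P(Z)$ by duality.

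For (b), the virtual transform $\tilde C$ of a conic through $\K$ has class $2L - \sum_i E_i^*$ and self-intersection $3 - n$, so it spans an extremal ray of $NE(Z)$ when $n \geq 3$. The candidate generators are the $E_i$, the strict transforms $\tilde L_{ij}$ of lines through pairs $(Q_i, Q_j)$, and $\tilde C$, each of self-intersection $\leq 0$. Any other irreducible curve $D$ satisfies $D \cdot \tilde C \geq 0$ and $D \cdot \tilde L_{ij} \geq 0$; using $\tilde C$ and the $\tilde L_{ij}$ as bounding tools, one expresses $D$ as a non-negative combination of the listed classes. Finiteness gives polyhedrality of $NE(Z)$; closedness of $\tilde P(Z)$ follows since $\tilde P(Z)^\circ = P(Z)^\circ$ and polyhedral cones are closed. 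Part (c): when $n \leq 3$, direct enumeration shows the listed generators form a lattice basis of $N^1$, yielding regularity of both $NE(Z)$ and $\tilde P(Z)$. Part (d): when $n \geq 4$ with the conic through $Q_0, \ldots, Q_4$, there are at least $(n+1) + \binom{5}{2} + 1 = n + 12$ distinct extremal rays (the $E_i$'s, the ten lines through pairs of the first five points, and $\tilde C$) in a cone of dimension $n+2$, which precludes simpliciality.

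The Hirzebruch case follows the same strategy: work in the basis $(F, M, E_0^*, \ldots, E_n^*)$ with pairing $F^2 = 0$, $M^2 = a$, $F \cdot M = 1$, $E_i^* \cdot E_j^* = -\delta_{ij}$. The curve in $|(1-a)F + 2M|$ through $\K$ plays the role of the conic; combined with finitely many strict transforms of relevant fibers, $(-a)$-sections, and low-degree curves through pairs of points, it provides a finite generating set of $NE(Z)$, hence polyhedrality. The main obstacle throughout is the completeness claim: showing every irreducible curve on $Z$ lies in the cone spanned by the listed classes. The key tool is the low-degree reference curve (line, conic, or $(1-a)F+2M$-curve), which bounds the intersection numbers of any would-be additional extremal curve, forcing it to decompose over the listed generators.
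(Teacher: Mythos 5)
The paper does not actually prove this proposition; it is quoted from \cite[Sect. 2.2.2]{galmon3}, so there is no internal proof to compare against, and your attempt has to be judged on its own. The general strategy is the right one (integral curves of negative self-intersection span extremal rays; the line or conic through $\K$ provides a nef class that controls all other curves), but every step that carries actual content is asserted rather than proved, and several of the assertions are wrong as stated. In (a), the claim that ``the resulting system of inequalities forces $c,a_i\geq 0$'' is exactly what needs proof; it amounts to the non-negativity of the inverse Gram matrix of $(\tilde{L},E_0,\ldots,E_n)$, which you never verify. (A cleaner route: any integral non-exceptional curve has class $\delta L-\sum m_iE_i^*$ with $0\le m_i\le\delta$, hence equals $\delta\tilde{L}+\sum(\delta-m_i)E_i^*$, a non-negative combination of $\tilde{L}$ and the effective total transforms $E_i^*$.) In (b) the analogous completeness step --- ``one expresses $D$ as a non-negative combination of the listed classes'' --- is the whole theorem and is not even sketched; moreover your claim that the virtual transform of the conic spans an extremal ray ``when $n\ge 3$'' fails twice: its self-intersection $3-n$ is negative only for $n\ge 4$, and extremality of a class of negative square requires an integral curve representing it, which the hypothesis of (b) (an arbitrary, possibly reducible, conic) does not supply.

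Parts (c) and (d) have more concrete problems. For (c) you claim ``direct enumeration shows the listed generators form a lattice basis,'' but the list inherited from (b) has up to $(n+1)+\binom{n+1}{2}+1$ elements in a lattice of rank $n+2$, so it cannot literally be a basis; you must identify which $n+2$ classes actually generate the cone and why the others are redundant, and that analysis necessarily uses the proximity structure of $\C$ (which points are infinitely near which), something your argument never touches. For (d), the count of $\binom{5}{2}=10$ lines through pairs presupposes five proper points in general position, whereas the hypothesis (successive strict transforms passing through $Q_0,\ldots,Q_4$) describes a chain of infinitely near points, for which most of those lines do not exist; a single extra negative curve beyond $E_0,\ldots,E_n$ and the conic does suffice, but you must exhibit it and then make explicit the duality step from ``more than $n+2$ extremal rays of $\overline{NE}(Z)$'' to ``$\tilde{P}(Z)$ is not simplicial'' (via $\overline{\tilde{P}(Z)}=P(Z)=\overline{NE}(Z)^{\vee}$), which you leave implicit. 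The Hirzebruch paragraph is a restatement of the goal rather than an argument. In short: the skeleton is plausible, but the proposal defers precisely the computations and case analysis that constitute the proof.
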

Now, we define the above mentioned notion of {\it P-sufficient}
configuration.
\begin{defn}
{\rm Set $\C=\{Q_0,\ldots,Q_n\}$ and consider the divisors $D(Q_i)$
such that ${\mathcal P}_{Q_i}\OO_{Z}=\OO_{Z}(-D(Q_i))$, ${\mathcal
P}_{Q_i}$ being the simple complete ideal of $\OO_{X,O}$ associated
with $Q_i$, $O$ being the image of $Q_i$ on $X$ (see Section
\ref{sect:22}). Consider the $(n+1)$-dimensional symmetric matrix
$G:=((g_{ij}))$, where
$$g_{ij}=-\alpha D(Q_i)\cdot D(Q_j)-(K_{X_{\C}}\cdot D(Q_i))(K_{X_{\C}}\cdot D(Q_j)),$$
$\alpha$ being 9 (resp. 8) if $X=\bP^2$ (resp. $X$ is a Hirzebruch
surface). The configuration $\C$ is said to be {\it P-sufficient} if
$xGx^t>0$ for all vectors $(x_0,\ldots,x_n)\in
\bR^{n+1}\setminus\{0\}$ such that $x_i\geq 0$ for all $i$.
 }
\end{defn}
\begin{rem}
{\rm Recall that the coefficients of each divisor $D(Q_i)$ are those
appearing in the $i$th row of the inverse of the proximity matrix of
$\C$. Hence, the matrix $G$ is easy to compute and depends only on
the proximity relations among the points in $\C$. }
\end{rem}
A general method to decide if a configuration is P-sufficient or not
is given in \cite{gaddum}, which consists of checking the
non-emptiness of certain sets defined by linear inequalities. Also,
a configuration is P-sufficient whenever all the entries of the
matrix $G$ are non-negative and the diagonal ones are strictly
positive (for an example, see \cite[page 86]{galmon}). Furthermore,
when the configuration is a chain, it is very easy to decide if it
is P-sufficient or not:
\begin{prop}\cite[Cor. 2]{galmon3}
When $\C$ is a chain constellation, $\C$ is P-sufficient if and only
if the last entry of the matrix $G$ is strictly positive.
\end{prop}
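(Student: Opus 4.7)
The forward implication is immediate: evaluating the defining P-sufficiency inequality on $x=(0,\dots,0,1)$ forces $g_{nn}>0$. The crux is therefore the converse. The plan is to prove that, for any chain constellation, every entry of $G$ is strictly positive, at which point the criterion (``non-negative entries with strictly positive diagonal'') recalled just before the statement gives P-sufficiency at once. As a by-product this shows that $g_{nn}$ itself is always positive for a chain, so the ``iff'' reduces to an automatically verifiable test.

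The key step is the identity
\[
K_{X_{\C}}\cdot D(Q_{i})=-1\qquad\text{for every }i\in\{0,\dots,n\}.
\]
I would derive it by inserting the relative canonical formula $K_{X_{\C}}=\sigma_{\C}^{*}K_{X}+\sum_{j}(1-\mathrm{ind}(Q_{j}))E_{j}^{*}$ into the intersection product, using the orthogonality relations $E_{j}^{*}\cdot E_{k}^{*}=-\delta_{jk}$ on the exceptional lattice and the projection formula (which annihilates $\sigma_{\C}^{*}K_{X}\cdot E_{k}^{*}$). This reduces the claim to the combinatorial equality
\[
\sum_{j}\bigl(1-\mathrm{ind}(Q_{j})\bigr)\,m_{ji}\;=\;m_{ii}\;=\;1,
\]
where the $m_{ji}$ are the virtual multiplicities of the cluster $\K_{\mathcal{P}_{Q_{i}}}$. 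Since $\C$ is a chain we are in dimension two, and the minimality built into the special $*$-simple ideal $\mathcal{P}_{Q_{i}}$ forces the proximity inequalities to hold as equalities at every $Q_{j}$ with $j<i$, namely $m_{ji}=\sum_{k\to j,\,k\le i}m_{ki}$. Substituting these into $\sum_{j}m_{ji}\,\mathrm{ind}(Q_{j})$ and swapping the order of summation produces a telescope that collapses to $m_{ii}=1$.

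With $K\cdot D(Q_{i})=-1$ in hand, the matrix entries simplify to
\[
g_{ij}=-\alpha\,D(Q_{i})\cdot D(Q_{j})-1=\alpha\sum_{k}m_{ki}m_{kj}-1.
\]
In a chain one has $m_{ki}=0$ when $k>i$ and $m_{ki}\ge 1$ when $k\le i$, because every point of $\C^{Q_{i}}$ is a base point of $\mathcal{P}_{Q_{i}}$. Hence $\sum_{k}m_{ki}m_{kj}\ge 1$ for every pair $(i,j)$, so $g_{ij}\ge\alpha-1\ge 7>0$. The quoted criterion then gives P-sufficiency.

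The only genuine obstacle is the combinatorial identity $\sum_{j}(1-\mathrm{ind}(Q_{j}))m_{ji}=1$; this telescope, which is special to the dimension-two setting (where proximity equalities hold for minimal special $*$-simple clusters), is the arithmetic core of the statement. Everything else is routine intersection-theoretic bookkeeping on the exceptional lattice.
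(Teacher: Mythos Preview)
Your argument breaks down at the canonical divisor formula. In dimension two one has
\[
K_{X_{\C}}=\sigma_{\C}^{*}K_X+\sum_{j}E_j^{*},
\]
the coefficient of each $E_j^{*}$ being $d-1=1$, \emph{not} $1-\mathrm{ind}(Q_j)$. (This follows immediately by iterating $K_{X'}=\pi^{*}K_X+E$ and pulling back.) Consequently
\[
K_{X_{\C}}\cdot D(Q_i)=\sum_j E_j^{*}\cdot\Bigl(\sum_k m_{ki}E_k^{*}\Bigr)=-\sum_k m_{ki},
\]
which is far from $-1$ in general. Your telescoping identity $\sum_j(1-\mathrm{ind}(Q_j))m_{ji}=1$ is in fact correct, but it is not the quantity $K_{X_{\C}}\cdot D(Q_i)$; you have expanded $K_{X_{\C}/X}$ in the wrong basis.

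With the correct value one gets
\[
g_{ij}=\alpha\sum_k m_{ki}m_{kj}-\Bigl(\sum_k m_{ki}\Bigr)\Bigl(\sum_k m_{kj}\Bigr),
\]
and your claim that every entry of $G$ is strictly positive for a chain is simply false. For instance, take $X=\bP^2$ and a chain of ten \emph{free} points (each $Q_{i+1}$ proximate only to $Q_i$); then $m_{k,9}=1$ for all $k$, so $g_{99}=9\cdot 10-10^{2}=-10<0$. Thus such a chain is \emph{not} P-sufficient, and the equivalence in the proposition is genuinely two-sided, not vacuous as your argument would imply. The actual proof (in \cite{galmon3}) must exploit the chain structure to show that, among all $x\ge 0$, the quadratic form $xGx^{t}=\alpha\sum_k y_k^{2}-(\sum_k y_k)^{2}$ (with $y=\mathbf{M}_{\C}x$) is hardest to make positive precisely at $x=e_n$; this is where the substance lies, and your approach does not address it.
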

The following result is proved in \cite[Th. 2]{galmon} when
$X=\bP^2$ and in \cite[Th. 1]{galmon3} in the general case, and it
shows that the P-sufficient configurations give raise to surfaces
with polyhedral cones of curves.
\begin{thm}
If $\C$ is a P-sufficient configuration then the cone of curves
$NE(Z)$ is polyhedral.
\end{thm}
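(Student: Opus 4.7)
The plan is to translate the P-sufficiency condition into the finiteness of numerical classes of integral curves on $Z$ with negative self-intersection, and then to invoke the standard structure theorem recalled earlier for the cone of curves of a rational surface. Recall from the discussion preceding Proposition \ref{conic} that $\overline{NE}(Z)$ is the closed convex hull of $Q(Z)\cap\overline{NE}(Z)$ together with the rays spanned by classes of integral curves of negative self-intersection, and that any accumulation point of the latter rays lies in $K_Z^{\perp}\cap\overline{Q(Z)}$. Consequently, once finiteness of negative self-intersection integral classes is established, no accumulation occurs, the round part $Q(Z)\cap\overline{NE}(Z)$ cannot contribute extra extremal rays, and $NE(Z)$ is polyhedral.

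The first step is to classify candidate extremal classes. An integral curve $C$ on $Z$ with $C^2<0$ is either (i) the strict transform in $Z$ of one of the exceptional divisors $B_{Q_i}$, of which there are only $n+1$, or (ii) the strict transform $\tilde\Gamma$ of an integral curve $\Gamma\subset X$ passing through (some of) the points of $\C$. For the latter, write the class as $\sigma_{\C}^{*}[\Gamma]-\sum_i m_i E_i^{*}$ with $m_i\ge 0$. Two numerical relations are forced on such a class: the self-intersection identity $\tilde\Gamma^{2}=[\Gamma]^{2}-\sum m_i^{2}<0$ and the adjunction inequality $\tilde\Gamma^{2}+K_Z\cdot\tilde\Gamma\ge -2$, which together with $K_Z=\sigma_{\C}^{*}K_X+\sum E_i^{*}$ yields a bound of genus type on $(\deg\Gamma,m_0,\dots,m_n)$.

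The second and decisive step is to use P-sufficiency to obtain a second, quadratic inequality on $(\deg\Gamma,m_0,\dots,m_n)$ that, together with adjunction, cuts out a bounded region. The matrix $G$ is precisely the Gram matrix of the vectors $\bigl(\sqrt{-\alpha}\,D(Q_i),\,K_Z\cdot D(Q_i)\bigr)$ with respect to the Minkowski-type form $-\alpha\,(\cdot)^{2}-(K_Z\cdot\,)^{2}$, so the hypothesis $xGx^{t}>0$ on $\bR_{\ge 0}^{n+1}\setminus\{0\}$ says exactly that
\[
-\alpha\,D^{2}-(K_Z\cdot D)^{2}>0 \qquad \text{for every } D\in\sum_i \bR_{\ge 0}\,D(Q_i)\setminus\{0\}.
\]
The idea is to apply this to a divisor of the form $D=a\,\sigma_{\C}^{*}A+\sum_i m_i D(Q_i)$, where $A$ is a suitably chosen ample class on $X$ (a line on $\bP^{2}$ or the generator of the nef cone for $\mathbb{F}_a$) and $a>0$ is large enough so that $D$ lies in the positive span of the $D(Q_i)$ in the relevant extended sense. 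Expanding $-\alpha D^{2}-(K_Z\cdot D)^{2}$ and comparing with the expressions of $\tilde\Gamma^{2}$ and $K_Z\cdot\tilde\Gamma$ in the $(\sigma_{\C}^{*},E_i^{*})$-basis produces a quadratic upper bound on $\sum m_i^{2}$ and $\deg\Gamma$, depending only on $\C$.

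Finally, I would combine the adjunction bound and the P-sufficiency bound to conclude that only finitely many tuples $(\deg\Gamma,m_0,\dots,m_n)$ are admissible, hence only finitely many numerical classes of negative self-intersection integral curves appear, whence polyhedrality of $NE(Z)$ by the reduction of the first paragraph.

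The hard part will be the second step: turning the copositivity condition $xGx^{t}>0$ on the simplicial cone generated by the $D(Q_i)$ into an effective estimate for the classes $\sigma_{\C}^{*}[\Gamma]-\sum m_i E_i^{*}$, which naturally sit in the positive $E_i^{*}$-cone and not in the positive $D(Q_i)$-cone. The transition between these two cones is governed by the matrix $\mathbf{M}_{\C}^{-1}$ (the proximity data), and one expects to need a judicious choice of $a$ above and of the auxiliary ample class $A$, done separately for $X=\bP^{2}$ and for $X=\mathbb{F}_a$ because the numerical invariants (notably $\alpha$ and the structure of the nef cone of $X$) differ.
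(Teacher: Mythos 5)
The paper does not actually prove this theorem: it cites \cite[Th. 2]{galmon} and \cite[Th. 1]{galmon3}, so your proposal must be measured against that argument. Your overall strategy (use the copositivity of $G$ to control integral curves of negative self-intersection) is in the right spirit, but there are two genuine gaps, one at the start and one at the heart of the argument.

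First, the reduction ``finitely many classes of integral curves with $C^2<0$ implies $NE(Z)$ polyhedral'' is not valid, and your sentence ``the round part $Q(Z)\cap\overline{NE}(Z)$ cannot contribute extra extremal rays'' is precisely the point that needs proof. Since $Q(Z)\subseteq\overline{NE}(Z)$, polyhedrality forces $Q(Z)$ to lie inside a finitely generated subcone of $NE(Z)$, and a sum $Q(Z)+P$ with $P$ finitely generated is in general not polyhedral (already for $P$ a single ray, infinitely many boundary rays of $Q(Z)$ remain extremal). Moreover, the structure theory recalled in Section 3.2 only places accumulation points of the $K_Z$-negative extremal rays (the $(-1)$-curves) on $K_Z^{\perp}$; extremal rays spanned by negative curves with $K_Z\cdot C\geq 0$ may a priori accumulate anywhere on $\partial Q(Z)$, and when $K_Z^2<0$ (which happens for P-sufficient configurations with more than $9$ points, such as the $11$-point example quoted in the paper) the set $\partial Q(Z)\cap\{K_Z\cdot x>0\}$ is nonempty and entirely contained in $\overline{NE}(Z)$. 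So you must additionally show that no ray of $\partial Q(Z)$ survives as an extremal ray, which requires a uniform, quantitative version of P-sufficiency ($xGx^t\geq\e |x|^2$ on the standard simplex, by compactness), not merely curve-by-curve finiteness.

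Second, the step you yourself flag as ``the hard part'' --- feeding the class $\sigma_{\C}^{*}[\Gamma]-\sum_i m_iE_i^{*}$ into the hypothesis on $G$ --- is not resolved by the device $D=a\,\sigma_{\C}^{*}A+\sum_i m_iD(Q_i)$: the hypothesis concerns only nonnegative combinations of the $D(Q_i)$, which span the exceptional lattice, while $\sigma_{\C}^{*}A$ is orthogonal to that lattice, so no choice of $a$ places $D$ in the cone where $G$ is assumed positive. The missing observation is that there is no transition problem at all: if $\tilde\Gamma$ is the strict transform of an integral curve $\Gamma\subseteq X$ with multiplicity vector $\underline{m}$, then $\tilde\Gamma\cdot E_j\geq 0$ for every $j$, i.e.\ $\underline{m}$ satisfies the proximity inequalities, and since $(D(Q_j))$ is the dual basis of $(-E_j)$ (Section 2.2) this says exactly that $\sum_i m_iE_i^{*}=\sum_j r_jD(Q_j)$ with all $r_j\geq 0$. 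Applying $G$ to $\underline{r}$ then gives $\alpha\sum m_i^2-(\sum m_i)^2>0$, uniformly $\geq \e\sum m_i^2$; played against $K_Z\cdot\tilde\Gamma=\sum m_i-3d$ (for $X=\bP^2$, and its analogue for $\mathbb{F}_a$) and adjunction, this yields a uniform bound $\tilde\Gamma^{2}\leq -c\,d^{2}$ with $c>0$, which is what simultaneously gives finiteness and keeps the relevant rays away from $\partial Q(Z)$. With these two repairs your outline does match the cited proof.
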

\begin{rem}
{\rm It can be proved that, if $X=\bP^2$ (resp. $X$ is a Hirzebruch
surface) and the cardinality of $\C$ is $\leq 8$ (resp. $\leq 7$)
then $\C$ is P-sufficient. For an example of a P-sufficient
configuration with 11 points see \cite[page 86]{galmon}.
 }
\end{rem}
\begin{rem}
{\rm If $NE(Z)$ is polyhedral, the configuration $\C$ may not be
P-sufficient. For example, take a configuration consisting of 9 or
more proper points on a conic. The cone $NE(Z)$ is polyhedral (by
Proposition \ref{conic}) but, however, the configuration is not
P-sufficient. }
\end{rem}
\begin{rem}
{\rm As a result which follows from \cite{K}, the topological cells
of the characteristic cone $\tilde{P}(Z)$ (see \cite[page 340]{K}
for the definition) correspond one to one to surjective morphisms
from $Z$ to a (connected) normal variety ({\it contractions}). Since
there is an injection between the set of topological cells of
$\tilde{P}(Z)$ and the one of $P(Z)$ \cite[Th. 2.1]{CG}, one has
that the polyhedrality of $NE(Z)$ implies that the number of
contractions is finite. }
\end{rem}
\subsection{Clusters of base points associated with pencils on surfaces}\label{sect33}
We shall consider now a particular type of linear systems on a
projective regular surface $X$: given an effective divisor $H$,
$\varrho$ will be a linear sub-system of $|H|$ without fixed
components and with projective dimension 1 (a {\it pencil} in the
sequel). Such a pencil $\varrho$ corresponds to the projectivization
of the sub-vector space $V_{\varrho}$ of $H^0(X,\OO_{X}(H))$ given
by $\{s \in H^0(X,\OO_X(H)) \mid (s)_0\in \varrho\}\cup \{0\},$
$(s)_0$ denoting the divisor of zeros of the section $s$. If
$\I(\varrho)$ is the ideal sheaf on $X$ defining the base point
scheme of $\varrho$, consider the associated cluster of base points
$\K_{\I(\varrho)}=(\C_{\I(\varrho)},\underline{m})$ and the
associated divisor $D(\K_{\I(\varrho)})$.

\vsd \noindent {\em Cones of curves and irreducible pencils} \vsd

One can consider the linear system on $X$ of all effective divisors
in $|H|$ passing through the cluster $\K_{\I(\varrho)}$, which will
be denoted by ${\mathcal L}_H(\K_{\I(\varrho)})$. Then, it is clear
that $\varrho \subseteq {\mathcal L}_H(\K_{\I(\varrho)})$. The
question we propose to answer now is the following one: when is this
inclusion an equality? or, equivalently, when is a pencil determined
by the class of $H$ in the Picard group and its cluster of base
points? A fixed basis of $V_{\varrho}$ provides a rational map $f: X
\cdots \rightarrow \bP V_{\varrho}\cong \bP^1$ (actually this map is
independent from the basis up to composition with an automorphism of
$\bP^1$). The closures of the fibers of $f$ are exactly the curves
of the pencil $\varrho$. For this reason, the elements of $\varrho$
are usually called {\it fibers}. Moreover, the morphism
$\sigma:=\sigma_{\C_{\I(\varrho)}}:Z:=X_{\C_{\I(\varrho)}}\rightarrow
X$ is defined by the virtual transform on $Z$ of the chosen basis of
$V_{\varrho}$ with respect to the cluster of base points
$\K_{\I(\varrho)}$, and it is a minimal composition of point
blowing-ups eliminating the indeterminacies of the rational map $f$,
that is, the map $h:= f \circ \sigma:Z\rightarrow \bP^1$ is a
morphism (see \cite[Th. II.7]{Beau}).
It is clear that, if $C_1$ and $C_2$ are two curves on $Z$ such that
$C_1+C_2$ is contracted (to a closed point) by $h$, then both curves
$C_1, C_2$ must also be contracted by $h$. Therefore, the images in
$A(Z)$ of all curves of $Z$ which are contracted by $h$ generate a
face of the cone of curves $NE(Z)$, that we shall denote by
$\Delta_{\varrho}$.
Consider the divisor $G_{\varrho}:=\sigma^*H-D(\K_{\I(\varrho)})$.
The linear system $\check{\varrho}$ on $Z$ given by the virtual
transforms of the curves in $\varrho$ with respect to the cluster of
base points is contained in the complete linear system
$|G_{\varrho}|$, and the image of $G_{\varrho}$ in $A(Z)$ is the
same than the one of a general fiber of the pencil $\varrho$. As a
consequence, $G_{\varrho}$ is a nef divisor (since its associated
complete linear system is base point free). Therefore, we conclude
that $\Theta_{\varrho}:=NE(Z)\cap G_{\varrho}^{\perp}$ is a face of
$NE(Z)$.

\begin{lem}\label{faces}
Both faces $\Delta_{\varrho}$ and $\Theta_{\varrho}$ coincide.
\end{lem}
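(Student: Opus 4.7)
The plan is to prove the two inclusions $\Delta_{\varrho} \subseteq \Theta_{\varrho}$ and $\Theta_{\varrho} \subseteq \Delta_{\varrho}$ separately, using the fact that $G_{\varrho}$ is numerically equivalent to every fiber of the morphism $h : Z \to \bP^1$.

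For the inclusion $\Delta_{\varrho} \subseteq \Theta_{\varrho}$, I would argue that every integral curve $C$ on $Z$ that is contracted by $h$ is a component of some fiber $F$ of $h$. Since $F$ is a member of a base point free linear system (by construction of $h$), it is nef, and its self-intersection is $0$; writing $F = C + F'$ with $F'$ effective and using $0 = F^2 = C \cdot F + F' \cdot F$ together with $C \cdot F \geq 0$ and $F' \cdot F \geq 0$, I get $C \cdot F = 0$. Since $F$ is numerically equivalent to $G_{\varrho}$, this gives $C \cdot G_{\varrho} = 0$, so $[C] \in \Theta_{\varrho}$, and $\Delta_{\varrho} \subseteq \Theta_{\varrho}$ follows since $\Delta_{\varrho}$ is generated by such classes.

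For the reverse inclusion, the key point is that $G_{\varrho}$ is nef, so for every integral curve $C$ one has $C \cdot G_{\varrho} \geq 0$. Any class $v \in \Theta_{\varrho} \subseteq NE(Z)$ can be written as a nonnegative combination $v = \sum a_i [C_i]$ of classes of integral curves; the condition $v \cdot G_{\varrho} = 0$ combined with $C_i \cdot G_{\varrho} \geq 0$ forces $C_i \cdot G_{\varrho} = 0$ for each $i$ with $a_i > 0$. It therefore suffices to check that an integral curve $C$ with $C \cdot G_{\varrho} = 0$ is contracted by $h$: if it were not, then $h|_C : C \to \bP^1$ would be surjective, so the scheme-theoretic fiber $(h|_C)^{-1}(\mathrm{pt})$ would be a nonzero effective divisor on $C$ of degree $C \cdot F = C \cdot G_{\varrho} > 0$, a contradiction.

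The main obstacle is the second inclusion, and specifically the step of reducing from an arbitrary class in the face $\Theta_{\varrho}$ to integral curves orthogonal to $G_{\varrho}$; this relies on the nefness of $G_{\varrho}$ (guaranteed because $|G_{\varrho}|$ is base point free on $Z$, by minimality of $\sigma$) and on the fact that in a strongly convex cone, a face cut out by a nef class is spanned by the classes of integral curves it contains. Once these two observations are in place the implication $C \cdot G_{\varrho} = 0 \Rightarrow C$ contracted is a short degree computation via $h|_C$, and the equality $\Delta_{\varrho} = \Theta_{\varrho}$ follows.
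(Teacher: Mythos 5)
Your proof is correct, and both inclusions are handled by a genuinely different route than the paper's. For $\Delta_{\varrho}\subseteq\Theta_{\varrho}$ the paper does not argue curve by curve: it takes a nonzero nef class $y$ supporting the face $\Delta_{\varrho}$, notes that $y\in G_{\varrho}^{\perp}$ because $G_{\varrho}$ itself lies in $\Delta_{\varrho}$, and then uses the Hodge index theorem (since $G_{\varrho}^{2}=0$, the hyperplane $G_{\varrho}^{\perp}$ is tangent to the positive cone $Q(Z)$, so every nonzero class in $G_{\varrho}^{\perp}$ with nonnegative square is a real multiple of $G_{\varrho}$) to conclude $y$ is proportional to $G_{\varrho}$, whence $\Delta_{\varrho}\subseteq NE(Z)\cap y^{\perp}=\Theta_{\varrho}$. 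Your replacement --- the Zariski-lemma computation $0=F^{2}=C\cdot F+F'\cdot F$ with both summands nonnegative because $F$ is nef --- is more elementary: it avoids the Hodge index theorem and avoids having to justify that the face $\Delta_{\varrho}$ admits a supporting nef class at all (a point the paper dismisses as clear). For $\Theta_{\varrho}\subseteq\Delta_{\varrho}$ the paper factors $h=t\circ\phi$ through the morphism $\phi$ defined by $|G_{\varrho}|$ and observes that a curve with $G_{\varrho}\cdot C=0$ is contracted by $\phi$, hence by $h$; your degree computation on $h|_{C}$, using $h^{*}\OO_{\bP^{1}}(1)\cong\OO_{Z}(G_{\varrho})$ and the surjectivity of $h|_{C}$ when $C$ is not contracted, reaches the same conclusion without introducing $\phi$. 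Your reduction from an arbitrary class of $\Theta_{\varrho}$ to integral curves is legitimate precisely because $\Theta_{\varrho}$ is defined inside $NE(Z)$ rather than its closure, so every class is a finite nonnegative combination of integral curve classes and nefness of $G_{\varrho}$ forces each summand into $G_{\varrho}^{\perp}$. What the paper's route buys is the tangency statement relating $G_{\varrho}^{\perp}$ to $Q(Z)$, which is reused immediately afterwards in the discussion of irreducible pencils; what yours buys is a shorter, self-contained argument using only standard facts about fibrations of surfaces.
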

\begin{proof}
The morphism $h$ factorizes as $h=t\circ \phi$, where $\phi
:Z\rightarrow \bP H^0(Z,\OO_{Z}(G_{\varrho}))$ is the morphism
induced by a basis of $H^0(Z,\OO_{Z}(G_{\varrho}))$ obtained by
completing the one given by the virtual transform of the fixed basis
of $V_{\varrho}$, and $t: \bP H^0(Z,\OO_{Z}(G_{\varrho}))- -
\rightarrow \bP V_{\varrho}\cong \bP^1$ is the projection. A curve
$C$ is contracted by $\phi$ if and only if $G_{\varrho}\cdot C=0$,
and it is obvious that, in this case, it is also contracted by $h$.
Therefore, one has that $\Theta_{\varrho}\subseteq
\Delta_{\varrho}$. Since the strict transforms on $Z$ of two general
fibers do not meet, one has that $G_{\varrho}^2=0$. When $\dim
A(Z)\geq 3$, the hyperplane $G_{\varrho}^{\perp}$ is tangent to the
cone $Q(Z)$ defined in the preceding section. So, we have the
following equivalence (which is also valid when $\dim A(Z)=2$):
$$  x\in G_{\varrho}^{\perp}\setminus \{0\} \mbox{ and } x^2<0 \mbox{ if and only if  $x$ is not a (real) multiple of $G_{\varrho}$}.$$
It is clear that there exists $y\in P(Z)\setminus \{0\}$ such that
$\Delta_{\varrho}\subseteq NE(Z)\cap y^{\perp}$. But $y$ belongs to
$G_{\varrho}^{\perp}$, since $G_{\varrho}\in \Delta_{\varrho}$. So,
by the above equivalence, $y$ is a multiple of $G_{\varrho}$ and
hence $\Delta_{\varrho}\subseteq \Theta_{\varrho}$.
\end{proof}
Notice that the  integral curves which are contracted by $h$ are
exactly the strict transforms of the integral components of the
fibers of the pencil $\varrho$ and some strict transforms of
exceptional divisors (the so-called {\it vertical} exceptional
divisors). From this consideration and the above ones it follows the
next result, whose proof is also implicit in \cite{galmon2} but in a
different framework.
\begin{prop}
An integral curve $C$ on $X$ is a component of a fiber of the pencil
$\varrho$ if and only if its strict transform $\tilde{C}$ on $Z$
satisfies that $G_{\varrho}\cdot \tilde{C}=0$. Moreover, in this
case, $\tilde{C}^2\leq 0$.
\end{prop}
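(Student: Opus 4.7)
The key observation is that $G_\varrho$ is numerically equivalent to a general fiber of the induced morphism $h = f\circ\sigma : Z \to \mathbb{P}^1$. Indeed, away from the base points of $\varrho$ the virtual transform of a fiber agrees with its total transform, so for each $p \in \mathbb{P}^1$ the scheme $h^{-1}(p)$ has class $G_\varrho$, and general fibers are irreducible; hence $G_\varrho \cdot D$ can be computed against any particular fiber.

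For the forward implication, suppose $C$ is an integral component of some fiber $F \in \varrho$ and let $p := f(F)$. Then $\sigma^{-1}(F) \subseteq h^{-1}(p)$ (set-theoretically), so $\tilde C \subseteq h^{-1}(p)$ and $h(\tilde C) = \{p\}$. Taking any other fiber $h^{-1}(p')$ with $p' \ne p$, we get $\tilde C \cap h^{-1}(p') = \emptyset$, and since $h^{-1}(p') \equiv G_\varrho$ numerically, this gives $\tilde C \cdot G_\varrho = 0$.

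For the converse, suppose $\tilde C \cdot G_\varrho = 0$. If $\tilde C$ were not contracted by $h$, then the restriction $h|_{\tilde C} : \tilde C \to \mathbb{P}^1$ would be surjective (as $\tilde C$ is integral and its image is an irreducible subvariety of $\mathbb{P}^1$ of positive dimension), and $\tilde C$ would meet a general fiber of $h$ properly, forcing $\tilde C \cdot G_\varrho > 0$, a contradiction. Therefore $\tilde C$ lies in some fiber $h^{-1}(p)$, whose support is the union of the strict transform $\tilde F$ of the corresponding fiber $F \in \varrho$ together with the vertical exceptional components lying over $p$. Since $\tilde C$ is the strict transform of an integral curve $C$ on $X$, it is not exceptional; hence $\tilde C$ is a component of $\tilde F$, which means $C$ is an integral component of $F$.

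Finally, for the inequality $\tilde C^2 \le 0$, decompose the fiber through $\tilde C$ as $h^{-1}(p) = m\tilde C + R$ with $m \ge 1$ and $R$ effective not containing $\tilde C$. Using a different fiber $h^{-1}(p') \equiv G_\varrho$ disjoint from $\tilde C$, we obtain $0 = h^{-1}(p) \cdot \tilde C = m \tilde C^2 + R\cdot \tilde C$, and since $R \cdot \tilde C \ge 0$ by effectivity of $R$ and $\tilde C$ without common components, we conclude $\tilde C^2 \le 0$. (Equivalently, Zariski's lemma applied to the fibers of $h$ gives negative semidefiniteness of the intersection form on fiber components.) The only subtle point is the surjectivity argument for $h|_{\tilde C}$ in the converse; everything else is a direct numerical computation once the fiber class $G_\varrho$ has been identified.
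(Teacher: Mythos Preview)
Your argument is correct, but it proceeds along a more elementary and self-contained route than the paper. The paper deduces the proposition from Lemma~\ref{faces}, which identifies the face $\Delta_\varrho$ of $NE(Z)$ spanned by $h$-contracted curves with the face $\Theta_\varrho = NE(Z)\cap G_\varrho^{\perp}$; that lemma in turn relies on the Hodge index theorem (via the tangency of $G_\varrho^{\perp}$ to the cone $Q(Z)$) to show $\Delta_\varrho\subseteq\Theta_\varrho$, and on the factorization $h=t\circ\phi$ for the reverse inclusion. The inequality $\tilde C^2\le 0$ then follows from the signature statement in that proof.

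By contrast, you bypass the face machinery and the Hodge index theorem entirely: both implications are handled by the direct observation that $\tilde C\cdot G_\varrho$ counts intersections with a general fiber of $h$, and the inequality $\tilde C^2\le 0$ is obtained from the fiber decomposition $h^{-1}(p)=m\tilde C+R$ (essentially Zariski's lemma). Your approach is shorter and avoids any appeal to the structure of $A(Z)$ or $Q(Z)$; the paper's approach, on the other hand, packages the result into the cone-of-curves framework, which is the language used throughout the surrounding section and in the applications that follow.
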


We fix a closed immersion $i: X \hookrightarrow \mathbb{P}^s$ of $X$
into a projective space; the {\it degree} of a curve $F$ on $X$ will
be the intersection product $i^*{\mathcal O}_{\mathbb{P}^s}(1)\cdot
F$.

The pencil $\varrho$ is said to be {\it irreducible} if it has
integral general fibers. The following proposition is also proved in
\cite{galmon2} when $X$ is the projective plane using the Cayley-Bacharach Theorem, but we show here
a different proof.
\begin{prop}\label{irred}
If $\varrho$ is irreducible then $\varrho ={\mathcal
L}_{H}(\K_{\I(\varrho)})$, that is, it is determined by the class of $H$ in the Picard group and
its cluster of base points.
\end{prop}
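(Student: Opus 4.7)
The containment $\varrho\subseteq {\mathcal L}_H(\K_{\I(\varrho)})$ is automatic, so the plan is to prove the reverse inclusion by showing that $\varrho$ already exhausts the complete linear system on $Z$ to which ${\mathcal L}_H(\K_{\I(\varrho)})$ corresponds. Recall that, via the virtual transform construction, passing through $\K_{\I(\varrho)}$ sets up a bijection between ${\mathcal L}_H(\K_{\I(\varrho)})$ and the complete linear system $|G_\varrho|$ on $Z$. Under this bijection $\varrho$ corresponds to the subsystem $\check\varrho\subseteq |G_\varrho|$, which is a pencil. Hence it suffices to prove $\dim |G_\varrho|=1$.

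First I would identify $G_\varrho$ with the pullback of a divisor of degree one from $\bP^1$. More precisely, the morphism $h=f\circ\sigma\colon Z\to \bP^1$ is exactly the map whose scheme-theoretic fibers are the virtual transforms of the elements of $\varrho$; consequently each such fiber belongs to $|G_\varrho|$ and in fact $G_\varrho\sim h^{*}\OO_{\bP^1}(1)$. Alternatively, the factorization $h=t\circ\phi$ used in the proof of Lemma~\ref{faces} shows that $\phi$ is defined by a subsystem of $|G_\varrho|$ and that $h$ factors through $t$, which is the projection induced by $\check\varrho\subset|G_\varrho|$.

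Next I would exploit the irreducibility hypothesis. Since the general member of $\varrho$ is integral, so is the general fiber of $h$; in particular it is connected. Applying the Stein factorization $h=g\circ h'$ with $h'\colon Z\to Y$ having connected fibers and $g\colon Y\to \bP^1$ finite, the connectedness of general fibers of $h$ forces $g$ to be birational, hence an isomorphism (because $\bP^1$ is normal). Therefore $h_{*}\OO_{Z}=\OO_{\bP^1}$. By the projection formula,
\[
H^{0}(Z,\OO_{Z}(G_\varrho))\cong H^{0}(\bP^1,\OO_{\bP^1}(1)\otimes h_{*}\OO_{Z})\cong H^{0}(\bP^1,\OO_{\bP^1}(1)),
\]
which has dimension $2$. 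Hence $\dim|G_\varrho|=1$, and since the pencil $\check\varrho$ is already contained in $|G_\varrho|$ and has the same dimension, they coincide. Translating back through the virtual transform bijection yields $\varrho={\mathcal L}_{H}(\K_{\I(\varrho)})$.

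The step I expect to be the main technical point is the identification $G_\varrho\sim h^{*}\OO_{\bP^1}(1)$ together with the application of Stein factorization: one must know that a general fiber of $h$ is genuinely integral (which is exactly the irreducibility hypothesis applied to $\varrho$, since the strict transform of an integral general fiber of $\varrho$ is an integral general fiber of $h$, and the possible extra vertical exceptional components do not appear in the general fiber). Once these two ingredients are in place, the projection formula and dimension count close the argument.
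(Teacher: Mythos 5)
Your proof is correct, but it follows a genuinely different route from the paper's. The paper argues by contradiction: if ${\mathcal L}_{H}(\K_{\I(\varrho)})$ had projective dimension $>1$, there would be infinitely many curves $C$ in it outside $\varrho$; since $G_{\varrho}$ is nef with $G_{\varrho}^2=0$, each such $C$ satisfies $G_{\varrho}\cdot \tilde{C}=0$, so by the immediately preceding proposition its integral components are components of fibers of $\varrho$, necessarily of degree strictly smaller than that of a general fiber, hence components of the finitely many reducible fibers of the irreducible pencil --- a finiteness contradiction. You instead compute $\dim|G_{\varrho}|$ exactly: identifying $\OO_Z(G_{\varrho})$ with $h^*\OO_{\bP^1}(1)$ (valid because $\check{\varrho}$ is base-point free and defines $h$), using integrality of the general fiber to force the Stein factorization of $h$ to be trivial, and concluding $h^0(Z,\OO_Z(G_{\varrho}))=2$ by the projection formula. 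Your approach buys more: it yields the precise equality $\dim|G_{\varrho}|=1$ rather than merely ruling out $\dim\geq 2$, it does not rely on the preceding proposition about fiber components, and it works in any characteristic (as does the paper's). The paper's approach is more elementary, avoiding Stein factorization and cohomological machinery, and fits the surrounding narrative built around the face $\Theta_{\varrho}=NE(Z)\cap G_{\varrho}^{\perp}$. The only points you should make fully explicit are (i) that a general member of $\varrho$ passes \emph{effectively} through $\K_{\I(\varrho)}$, so that its virtual transform coincides with its strict transform and the general fiber of $h$ is indeed integral, and (ii) that the finite part of the Stein factorization is an isomorphism because it is birational onto the normal curve $\bP^1$; both are routine and you essentially note them.
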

\begin{proof}
Reasoning by contradiction, assume that the projective dimension of
${\mathcal L}_{H}(\K_{\I(\varrho)})$ is greater than 1. Take a curve
$C\in {\mathcal L}_{H}(\K_{\I(\varrho)})$ such that $C\not\in
\varrho$. The image in $A(Z)$ of the virtual transform of $C$ on $Z$
with respect to the cluster $\K_{\I(\varrho)}$ belongs to
$|G_{\varrho}|$ and therefore, if $\tilde{C}$ denotes the strict
transform, it holds that $G_{\varrho}\cdot \tilde{C}=0$ (taking into
account that $G_{\varrho}$ is nef and $G_{\varrho}^2=0$). Then,
applying the above result, one has that the integral components of
$C$ are fibers of the pencil $\varrho$. If $C_1$ denotes one of
these components, one has that its degree is strictly less than the
one of the general fibers of $\varrho$ (otherwise, on the one hand,
$C_1$ would be a fiber of the pencil and, on the other hand, it
would coincide with $C$,  a contradiction). But, since the pencil is
irreducible, the number of reducible fibers is finite. The
contradiction follows from the fact that, by the initial assumption,
there are infinitely many curves $C$ as above.
\end{proof}
Assuming that the characteristic of the ground field is 0 we prove
the converse of the above statement:
\begin{prop}
If ${\rm char}(\bK)=0$ then $\varrho$ is an irreducible pencil if
and only if $\varrho ={\mathcal L}_{H}(\K_{\I(\varrho)})$.
\end{prop}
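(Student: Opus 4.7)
The forward implication is the preceding proposition, so the plan is to prove the converse by contrapositive. I would assume the general member of $\varrho$ is reducible and construct a divisor in $\mathcal{L}_H(\K_{\I(\varrho)}) \setminus \varrho$.

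Write $Z = X_{\C_{\I(\varrho)}}$, let $\sigma:Z\to X$ be the associated resolution, and put $G_\varrho = \sigma^*H - D(\K_{\I(\varrho)})$. The key reduction, already noted in this section, is that the virtual transform establishes a bijection between $\mathcal{L}_H(\K_{\I(\varrho)})$ and the complete linear system $|G_\varrho|$, under which $\varrho$ itself corresponds to the two-dimensional subspace $h^*H^0(\bP^1,\OO_{\bP^1}(1)) \subset H^0(Z,G_\varrho)$, where $h = f\circ\sigma:Z\to\bP^1$ is the morphism induced by $\varrho$. Hence the statement reduces to showing $h^0(Z,G_\varrho) \geq 3$.

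To compute $h^0(Z,G_\varrho)$, I would form the Stein factorization $h = \phi\circ h'$ with $h':Z\to Y$ satisfying $(h')_*\OO_Z = \OO_Y$ and $\phi:Y\to\bP^1$ finite of degree $n$. Since ${\rm char}(\bK)=0$, generic smoothness applied to $h'$ makes a general fibre smooth, hence integral; a general fibre of $h$ is then a disjoint union of $n$ such curves, so reducibility of $\varrho$ forces $n\geq 2$. Using $G_\varrho = h^*\OO_{\bP^1}(1) = (h')^*\phi^*\OO_{\bP^1}(1)$ and the projection formula combined with $(h')_*\OO_Z = \OO_Y$, one obtains
\[
h^0(Z,G_\varrho) \;=\; h^0\bigl(Y,\phi^*\OO_{\bP^1}(1)\bigr).
\]

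To finish, I would identify $Y\cong \bP^1$. In the global framework of this paper the base surface $X$ is rational, hence so is $Z$; the dominant morphism $h':Z\to Y$ then produces an inclusion $\bK(Y)\hookrightarrow \bK(Z)$ of a field of transcendence degree one into a purely transcendental field, and L\"uroth's theorem gives $Y\cong \bP^1$. Consequently $\phi^*\OO_{\bP^1}(1)\cong \OO_{\bP^1}(n)$ and $h^0(Z,G_\varrho) = n+1\geq 3$, yielding the strict inclusion $\varrho \subsetneq \mathcal{L}_H(\K_{\I(\varrho)})$. The hard step is precisely this last identification $Y\cong \bP^1$: without the rational setting it can fail (for instance if $\phi$ were a hyperelliptic double cover, where $\phi^*\OO_{\bP^1}(1)$ is the canonical bundle and carries only two global sections), so the rationality of $X$ together with the characteristic zero hypothesis are both essential for the projection formula computation to close out.
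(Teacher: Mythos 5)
Your argument is correct (given rationality of $X$; see below) but it is not the paper's argument, so a comparison is in order. The paper proves the converse by contradiction: assuming $\varrho={\mathcal L}_H(\K_{\I(\varrho)})$ and $\varrho$ not irreducible, it invokes the characteristic-zero Bertini statement that $\varrho$ is \emph{composite with an irreducible pencil} $\varsigma$, i.e.\ $f=q_2\circ q_1$ with $q_2:\bP^1 \cdots\rightarrow \bP^1$ generically finite of degree $n=d/e>1$; then for a general fiber $C$ of $\varsigma$ one has $n\tilde{C}$ linearly equivalent to $G_{\varrho}$, so $nC\in {\mathcal L}_H(\K_{\I(\varrho)})=\varrho$, forcing the general fiber of $\varrho$ to be non-reduced, a contradiction. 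You replace this with a cohomological count: $\varrho={\mathcal L}_H(\K_{\I(\varrho)})$ iff $h^0(Z,G_{\varrho})=2$, and Stein factorization, generic smoothness, the projection formula and L\"uroth give $h^0(Z,G_{\varrho})=h^0(Y,\phi^*\OO_{\bP^1}(1))=n+1\geq 3$. The engine is the same in both proofs (the Stein/composite-pencil structure plus characteristic $0$), but your version is quantitative --- it identifies the codimension of $\varrho$ in ${\mathcal L}_H(\K_{\I(\varrho)})$ as exactly $n-1$ --- whereas the paper's merely produces one forbidden member. Two small points you should make explicit: the general fibre of $h'$ is integral because it is smooth \emph{and connected} (the latter from $(h')_*\OO_Z=\OO_Y$), and the general fibre of $h$ coincides with the strict transform of the general member of $\varrho$, so non-integrality of the latter really does force $n\geq 2$.

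The one substantive caveat concerns your appeal to ``the global framework'' to get $Y\cong\bP^1$: Subsection 3.3 is stated for an arbitrary projective regular surface $X$, with no rationality hypothesis, so L\"uroth is not literally available. However, the paper's own proof makes the identical silent assumption --- it writes the intermediate curve of the factorization as $\bP^1$ without justification --- and your hyperelliptic remark shows the assumption cannot be dropped: for $X=E\times\bP^1$ with $E$ elliptic and $\varrho=|{\rm pr}_1^*L|$, $L$ a degree-two line bundle on $E$, one gets a base-point-free reducible pencil with $\varrho={\mathcal L}_H(\K_{\I(\varrho)})=|H|$, so the statement itself fails without rationality of $X$ (or at least of the Stein curve $Y$). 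So this is not a defect of your proof relative to the paper's; if anything, you have made explicit a hypothesis the paper uses tacitly.
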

\begin{proof}
Reasoning by contradiction, assume that $\varrho ={\mathcal
L}_d(\K_{\I(\varrho)})$ and $\varrho$ is not irreducible. Then $\varrho$ is {\it composite with an
irreducible pencil}, that is, there exist rational maps
$q_1:X\cdots \rightarrow \bP^1$ and $q_2:\bP^1\cdots \rightarrow
\bP^1$ such that the closures of the fibers of $q_1$ correspond to
an irreducible pencil $\varsigma$ of degree $e$, $q_2$ is
generically finite of degree $n:=d/e>1$ and $f=q_2\circ q_1$.
Therefore, if $C$ is a general fiber of $\varsigma$, taking into
account that $n\tilde{C}$ is linearly equivalent to $G_{\varrho}$,
one gets that $nC$ is a fiber of $\varrho$. Hence, general fibers of
$\varrho$ are not reduced, which is a contradiction.
\end{proof}

\vsd \noindent {\em Pencils at infinity}\vsd

We shall assume until the end of this subsection that ${\rm
char}(\bK)=0$. A specially interesting class of pencils on the
projective plane $\bP^2$ are the so-called {\it pencils at
infinity}.
\begin{defn}
{\rm Taking homogeneous
coordinates $(X_1:X_2:X_3)$ on $\bP^2$, a (linear) pencil (without fixed
components) $\varrho \subseteq |\OO_{\bP^2}(d)|$, $d\in \bZ_+$, is said to be {\it at
infinity} if $V_{\varrho}=\langle F,X_3^d \rangle$, where $F(X_1,X_2,X_3)$
is an homogeneous polynomial of degree $d$ and $X_3=0$ is considered
as the line of infinity. }
\end{defn}
A particular case of pencil at infinity is obtained when $F=0$
defines a curve $C$ {\it having one place at infinity}, that is, it
intersects with the line of infinity only in a single point $Q$ and
$C$ is reduced and unibranched at $Q$. This is easily seen to imply
that $C$ is integral. This type of curves have been extensively
studied by several authors as Abhyankar, Moh, Satayhe and Suzuki
\cite{abh, abh2, abh1, moh, sat, suzuki}. All the curves in the
pencil at infinity defined by $F$, except the non-reduced one, have
one place at infinity and their singularities at the point of
infinity have the same minimal embedded resolution than the one of
$C$ \cite{moh}. In  \cite{CPR} is proved a structure theorem for the
cone of curves and the characteristic cone of the surface $Z$
obtained by blowing-up the configuration of base points of a pencil
of this type (which is a chain constellation). Actually, instead of
the cone of curves, the {\it effective semigroup} is considered; it
is the sub-semigroup $NE_S(Z)$ of ${\rm Pic}(Z)$ spanned by the
classes of the effective divisors.
\begin{thm}\cite{CPR}\label{thm32}
Let $\varrho$ be a pencil at infinity such that $V_{\varrho}=\langle F,X_3^d \rangle$, where $F=0$ defines a curve having one place at infinity of degree $d\geq 1$, and let $Z$ be the surface obtained by blowing-up the points in the configuration ${\mathcal C}_{\mathcal{I}(\varrho)}$. Then:
\begin{itemize}
\item[(a)] The semigroup $NE_S(Z)$ is spanned by the strict transform of the line of infinity and the strict transforms of the exceptional divisors.
\item[(b)] The cones $P(Z)$ and $\tilde{P}(Z)$ coincide and are regular.
\end{itemize}
\end{thm}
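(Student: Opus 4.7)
The plan is to exploit the chain structure of $\C = \C_{\I(\varrho)}$ and the integrality of the generic fibers of the induced morphism $h : Z \to \bP^1$. First I would establish the setup: since $F=0$ defines a curve with one place at infinity of degree $d$, it meets $L = \{X_3=0\}$ only at a single point $Q_0$ with intersection multiplicity $d$ along a unique unibranched germ, so the base locus of $\varrho$ is concentrated at $Q_0$ with length $d^2$ and $\C=\{Q_0,\ldots,Q_n\}$ is a chain of infinitely near points along this branch. By the Abhyankar--Moh theorem quoted above, every non-degenerate member of the pencil is itself integral with one place at infinity and shares the embedded resolution graph at $Q_0$. Consequently $h$ has integral generic fibers, and the unique reducible fiber is $h^{-1}([0:1])$ corresponding to $X_3^d=0$: as a divisor on $Z$ its support is $\tilde L \cup \bigcup_i E_i$ (with positive multiplicities dictated by the chain), and its class is $[G_\varrho]$.

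For part (a) the inclusion $\bZ_{\geq 0}\langle \tilde L, E_0, \ldots, E_n\rangle \subseteq NE_S(Z)$ is immediate. For the reverse, I would decompose any effective divisor into integral components and classify integral curves $C$ on $Z$: either $C$ is already $\tilde L$ or some $E_i$, or $C$ is the strict transform of an integral plane curve $C_0 \neq L$. If $C_0 \in \varrho$ then $[C]=[G_\varrho]$, which coincides with the class of the special fiber and therefore lies in the target monoid. If $C_0 \notin \varrho$, then $C$ is not contained in any fiber of $h$, so $h|_C$ is surjective and $C \cdot [G_\varrho] > 0$. Writing $[C] = a\tilde L + \sum b_i E_i$, I would recover the coefficients by intersecting $[C]$ against $[G_\varrho]$ and the classes $-D(Q_i)$ attached to the $*$-simple ideals $\PP_{Q_i}$; the chain structure renders the resulting linear system triangular, and positivity of the intersection numbers (a consequence of $C_0$ being an integral plane curve distinct from members of $\varrho$ and from $L$) forces $a, b_i \geq 0$.

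For part (b) I would first verify that the intersection Gram matrix of $\{\tilde L, E_0, \ldots, E_n\}$ has determinant $\pm 1$; this is a direct computation using the chain structure of $\C$ together with the passage multiplicities of $L$ through the first points of the chain and the standard formulas for $E_i \cdot E_j$ in terms of the proximity matrix. Hence $\{\tilde L, E_0, \ldots, E_n\}$ is a $\bZ$-basis of $N^1(Z)$, and from (a) the cone $\overline{NE(Z)}$ is regular; dualizing gives that $P(Z)$ is regular with dual-basis generators. To identify $P(Z) = \tilde P(Z)$ I would exhibit each dual-basis generator as the class of a globally generated divisor: $[G_\varrho]$ is globally generated via $h$, the pullback $\sigma_\C^* \OO_{\bP^2}(1)$ is globally generated, and the remaining dual-basis elements can be realized as non-negative combinations of $[G_\varrho]$, $\sigma_\C^*\OO_{\bP^2}(1)$, and the relatively $\sigma_\C$-generated classes $-D(Q_i)$, the combination promoting relative $\sigma_\C$-generation to genuine global generation on $Z$.

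The main obstacle will be the last case of part (a): showing that the strict transform of an arbitrary integral plane curve $C_0 \notin \varrho \cup \{L\}$ has non-negative coefficients in the basis $\{\tilde L, E_0, \ldots, E_n\}$. This reduces to a family of intersection-theoretic inequalities that must be verified combinatorially, exploiting both the chain structure of $\C$ and the rigidity imposed by the one-place-at-infinity hypothesis on $F$; once this step is in hand, part (b) follows by linear-algebraic duality and explicit identification of the generators of $P(Z)$ with globally generated line bundles.
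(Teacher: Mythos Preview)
The paper does not prove this statement; it is quoted from \cite{CPR} without argument, so there is no in-paper proof to compare against.

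Your outline has the right shape but leaves the essential content unproved. The step you yourself flag as the ``main obstacle''---showing that an arbitrary integral plane curve $C_0\notin\varrho\cup\{L\}$ has non-negative coordinates in the basis $\{\tilde L,E_0,\dots,E_n\}$---is exactly where the work lies, and intersecting with $[G_\varrho]$ and the $-D(Q_i)$ does not do it: the classes $-D(Q_i)$ are only $\sigma_{\C}$-nef, not nef on $Z$, and they are not the dual basis to $\{\tilde L,E_i\}$, so no sign control follows. The ``triangularity'' of the resulting linear system is irrelevant if the right-hand side has no determined sign. The proof in \cite{CPR} proceeds instead through the arithmetic of the one-place-at-infinity singularity: the approximate roots $q_0=X_3,q_1,\dots,q_g$ of $F$ and the associated $\delta$-sequence $(\delta_0,\dots,\delta_g)$ give distinguished curves on $\bP^2$ whose strict transforms, together with selected $E_i$, realize the dual basis to $\{\tilde L,E_0,\dots,E_n\}$ explicitly, and the Abhyankar--Moh semigroup inequalities among the $\delta_i$ are what force the required non-negativity. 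Without invoking this machinery (or an equivalent substitute) the ``combinatorial verification'' you allude to has no starting point.

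Your plan for (b) has a parallel gap: a non-negative combination of a globally generated class with a merely $\sigma_{\C}$-generated exceptional class is not automatically globally generated on $Z$---one needs enough of the ample part, and the dual-basis coefficients are fixed by the intersection form, not free to be enlarged. In \cite{CPR} the generators of $\tilde P(Z)$ are identified directly with the classes of the strict transforms of the approximate-root curves $q_i$, each of which moves in its own pencil at infinity of lower degree and is therefore visibly base-point-free; this simultaneously gives regularity of $P(Z)$ and the equality $\tilde P(Z)=P(Z)$.
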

Recall that, in the local case, Enriques solved the problem of determining when there exists a germ of curve passing effectively through a cluster $(\C, \underline{m})$ leading to the proximity inequalities \cite{EC,Ca}, and Zariski considered the semigroup of $\sigma_{\C}$-generated line bundles in order to establish the unique factorization of complete ideals \cite{ZS}. In our case, global analogues to these problems consist of characterizing the semigroup $P_S^{st}(Z)\subseteq {\rm Pic}(Z)$ generated by the classes  of the strict transforms on $Z$ of curves on $\bP^2$, and the semigroup $\tilde{P}_S(Z)\subseteq {\rm Pic}(Z)$  generated by the classes of divisors $D$ on $Z$ such that $\OO_{\bP^2}(D)$ is generated by global sections (notice that one has $\tilde{P}_S(Z)\subseteq P_S^{st}(Z)\subseteq P_S(Z)$, where $P_S(Z)$ denotes the semigroup generated by the nef classes). Concerning these questions, in \cite{CPR} it is proved the following result:
\begin{thm}\label{thm33}
Let $\varrho$ and $Z$ be as in Theorem \ref{thm32}.
\begin{itemize}
\item[(a)] A divisor class $D$ belongs to $P_S^{st}(Z)$ if and only if $\OO_{\bP^2}(D)$ is generated by global sections except possibly at finitely many closed points.
\item[(b)] $\tilde{P}_S(Z)= P_S^{st}(Z)= P_S(Z)$ if and only if all curves in the pencil $\varrho$ (except the non-reduced one) are rational.
\end{itemize}
\end{thm}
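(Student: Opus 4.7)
The proof uses two consequences of Theorem~\ref{thm32}: first, since $NE_S(Z)$ is generated by the classes of $\tilde L_\infty$ and of the strict exceptional divisors $E_0,\ldots,E_n$, every integral curve on $Z$ is either some $E_i$ or the strict transform of an integral plane curve; second, the exact sequence $0\to\OO_Z\to\OO_Z(E_i)\to\OO_{E_i}(E_i)\to 0$ together with $E_i^2<0$ forces $h^0(\OO_Z(E_i))=1$, so each $E_i$ is rigid. I reformulate the condition in (a) as ``$|D|$ has no fixed component'', which is equivalent to the base scheme of $|D|$ being zero-dimensional.

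For part (a), direction $\Leftarrow$: assume $|D|$ has no fixed component and let $D'\in|D|$ be general. By the rigidity of each $E_i$, the subvariety $\{D'\in|D|:E_i\subset D'\}$ is the image of $|D-E_i|$ in $|D|$ under $E'\mapsto E'+E_i$, which is proper in $|D|$ unless $E_i$ is a fixed component; since there are none, $D'$ contains no exceptional component. Hence $D'$ is a sum of strict transforms of integral plane curves, giving $D=[D']\in P_S^{st}(Z)$. For direction $\Rightarrow$, write $D=\sum n_j[\tilde C_j]$ as $n\sigma^*L-\sum e_iE_i^*$ in $\mathrm{Pic}(Z)$; then $\sigma_*\OO_Z(D)$ is the ideal sheaf of the cluster $(\C_{\I(\varrho)},(e_i))$ twisted by $\OO_{\bP^2}(n)$, so $|D|$ is in bijection with the linear system of plane curves of degree $n$ passing through this cluster. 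The pencil $\varrho$ supplies such curves of degree $d$; multiplying by $\sigma^*\OO_{\bP^2}(k)$ produces curves of degree $n+k$ passing through the cluster with the same multiplicities, and these extra degrees of freedom let one exhibit, for each integral curve $Y\subset Z$, an element of $|D|$ not containing $Y$. Verifying this last step rigorously is the main technical hurdle of part (a).

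For part (b), the chain $\tilde P_S(Z)\subseteq P_S^{st}(Z)\subseteq P_S(Z)$ together with (a) reduces the claim to: every nef $D$ on $Z$ is globally generated if and only if every reduced fiber of $\varrho$ is rational. Direction $\Leftarrow$: if fibers are rational, the resolved morphism $h\colon Z\to\bP^1$ has general fiber $\bP^1$; for $D$ nef the degree of $D|_F$ on each fiber $F$ is non-negative, so a Mayer--Vietoris computation on degenerate fibers (trees of $\bP^1$'s) gives $R^1h_*\OO_Z(D)=0$. Then $h_*\OO_Z(D)$ is a vector bundle on $\bP^1$ that splits as $\bigoplus\OO_{\bP^1}(a_i)$, with $a_i\geq 0$ forced by nefness of $D$ against the horizontal curves of $Z$, so $h_*\OO_Z(D)$ is globally generated on $\bP^1$ and, fiberwise, $\OO_F(D|_F)$ is globally generated on $F$, giving global generation of $\OO_Z(D)$ on $Z$. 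Direction $\Rightarrow$: if some reduced fiber $C_t$ has positive arithmetic genus $g$, its strict transform $\tilde C_t$ has class $G_\varrho$ and $\tilde C_t^2=0$, so by adjunction $\deg(K_Z|_{\tilde C_t})=2g-2>0$. This positive-degree canonical restriction to a non-rational curve obstructs global generation: one constructs a nef class $D$ combining $G_\varrho$, $K_Z$, and appropriate $E_i$'s, whose restriction to $\tilde C_t$ is a line bundle of positive degree that fails to be globally generated on $\tilde C_t$ (owing to the positive genus), so $\OO_Z(D)$ cannot be globally generated either, contradicting $\tilde P_S(Z)=P_S(Z)$. The main obstacle in part (b) is the explicit construction of this obstructing class; I expect it to require a careful adjunction and Riemann--Roch analysis on $\tilde C_t$ together with the chain-like structure of the configuration $\C_{\I(\varrho)}$.
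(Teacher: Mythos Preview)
The paper does not give a proof of Theorem~\ref{thm33}; the result is quoted from \cite{CPR} and stated without argument, so there is no ``paper's own proof'' to compare against. I will therefore evaluate your sketch on its merits and against the strategy actually used in \cite{CPR}.

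Your outline contains genuine gaps that you yourself flag. In part~(a), direction $\Rightarrow$, the crucial step is missing: given $D=\sum n_j[\tilde C_j]$ a non-negative combination of strict transforms, you must show $|D|$ has no fixed component, but your suggestion of ``multiplying by $\sigma^*\OO_{\bP^2}(k)$'' changes the class and does not address why the original $|D|$ is movable. This is not a technicality: on a general blown-up surface a sum of strict transforms can perfectly well have a fixed component, so the argument must use the specific structure of $Z$ coming from the one-place-at-infinity hypothesis. In part~(b), direction $\Rightarrow$, you never construct the obstructing nef class, and your heuristic ``combine $G_\varrho$, $K_Z$, and some $E_i$'' is not an argument. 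Direction $\Leftarrow$ of (b) is closer to workable, but the claims that $h_*\OO_Z(D)$ has non-negative splitting type and that fiberwise global generation plus $R^1h_*=0$ implies global generation on $Z$ both need justification; the second in particular requires surjectivity of restriction maps that does not follow formally from what you wrote.

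The actual proof in \cite{CPR} takes a quite different route. It exploits the arithmetic of the semigroup of values of the curve at its point at infinity (the $\delta$-sequence of approximate roots) to produce an explicit set of divisors $D_0,\ldots,D_g$ whose non-negative integer combinations give exactly the nef semigroup, and then analyses global generation directly on these generators. The equivalence with rationality of the fibers in (b) comes from a numerical condition on the $\delta$-sequence (essentially Abhyankar--Moh--Suzuki type data), not from a cohomological argument on the fibration. Your relative/fibration approach is conceptually reasonable for the $\Leftarrow$ direction of (b), but for (a) and for the $\Rightarrow$ direction of (b) you would need to import substantial input from the one-place-at-infinity theory that your sketch does not invoke.
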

Due to Theorem \ref{thm32} the cone of curves $NE(Z)$ associated to a pencil defined by a curve having one place at infinity is always polyhedral. Since P-sufficient configurations give rise to polyhedral cones of curves, a natural question arises: when is the configuration of base points of such a pencil P-sufficient? The answer is given in the following proposition:
\begin{prop}\label{prop36} \cite[Prop. 3]{mons} 
Let $\varrho$ and $Z$ be as in Theorem \ref{thm32}. The
configuration ${\mathcal C}_{\I(\varrho)}$ is P-sufficient if and
only if $F=0$ defines an Abhyankar-Moh-Suzuki curve (i.e. it is
rational and smooth in its affine part).
\end{prop}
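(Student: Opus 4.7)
The plan is to apply the chain criterion for P-sufficiency together with the adjunction formula for the class of a general fiber of the pencil. Since $F=0$ has one place at infinity at its unique intersection $Q_0$ with $X_3=0$, every member of $\varrho$ shares this unibranch singularity; consequently, as recalled in the paragraph preceding Theorem \ref{thm32}, the base-point configuration $\C_{\I(\varrho)} = \{Q_0, \ldots, Q_n\}$ is the (linear) chain of infinitely near points produced by the minimal embedded resolution of that singularity. By the chain proposition cited above, it suffices to show that
$$g_{nn} = -9\, D(Q_n)^2 - (K_Z \cdot D(Q_n))^2$$
is strictly positive exactly when $F=0$ is AMS, where $Z = X_{\C_{\I(\varrho)}}$.

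Writing $D(Q_n) = \sum_{i=0}^n m_{in} E_i^*$ and using the intersection relations $E_i^* \cdot E_j^* = -\delta_{ij}$, $H^* \cdot E_i^* = 0$, $(H^*)^2 = 1$ and $K_Z = -3 H^* + \sum_i E_i^*$, one obtains $D(Q_n)^2 = -\sum m_{in}^2$ and $K_Z \cdot D(Q_n) = -\sum m_{in}$, so
$$g_{nn} = 9 \sum_i m_{in}^2 - \Bigl(\sum_i m_{in}\Bigr)^2.$$
The crux is to identify $m_{in}$ with the multiplicity $m_i$ of the curve $C = \{F=0\}$ at $Q_i$. By the minimality in the definition of $\PP_{Q_n}$, the $m_{in}$ are determined by $m_{nn}=1$ and the proximity equalities $m_{in} = \sum_{j \prox i} m_{jn}$ for $i<n$. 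On the other hand, the one-place-at-infinity hypothesis forces the strict transform $\tilde C = d H^* - \sum m_i E_i^*$ to meet the exceptional configuration only at a single transverse point of $E_n$, so $\tilde C \cdot E_n = 1$ and $\tilde C \cdot E_i = 0$ for $i<n$; expanding via $E_i = E_i^* - \sum_{j \prox i} E_j^*$ yields exactly $m_i = \sum_{j \prox i} m_j$ for $i<n$ and $m_n = 1$. Hence $m_{in} = m_i$ for all $i$.

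To finish, I would use that the class $G_\varrho = dH^* - \sum m_i E_i^*$ of a general fiber of the morphism $h : Z \to \bP^1$ induced by $\varrho$ satisfies $G_\varrho^2 = 0$ (distinct fibers are disjoint), giving $\sum m_i^2 = d^2$, and that by adjunction applied to a smooth general fiber (available in characteristic zero by Bertini),
$$K_Z \cdot G_\varrho = 2g - 2, \qquad \text{i.e.,} \qquad \sum_i m_i = 3d + 2g - 2,$$
where $g$ is the arithmetic genus of any general fiber. Substituting yields
$$g_{nn} = 9 d^2 - (3d + 2g - 2)^2 = 4(1 - g)(3d + g - 1).$$
Since $d \geq 1$ and $g \geq 0$, the factor $3d + g - 1$ is strictly positive, so $g_{nn} > 0$ if and only if $g = 0$. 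For a curve with one place at infinity, $g=0$ forces $\tilde C$ to be integral of arithmetic genus zero, hence isomorphic to $\bP^1$, which is equivalent to $F = 0$ being simultaneously rational and smooth in its affine part (since $\sigma_{\C_{\I(\varrho)}} : Z \to \bP^2$ is an isomorphism outside the chain over $Q_0$), i.e., AMS. The main obstacle will be the identification $m_{in} = m_i$, as this is where the one-place-at-infinity hypothesis enters essentially.
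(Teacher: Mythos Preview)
Your argument is correct. The paper itself does not prove this proposition; it merely quotes it from \cite{mons}, so there is no in-text proof to compare against. That said, your route is exactly the one the surrounding material of the survey sets up: reduce to the chain criterion $g_{nn}>0$, identify the weights of $D(Q_n)$ with the multiplicity sequence of $C$ via the proximity equalities forced by $\tilde C\cdot E_i=\delta_{in}$, and then read off $g_{nn}=9d^2-(3d+2g-2)^2=4(1-g)(3d+g-1)$ from $G_\varrho^2=0$ and adjunction. The only point worth tightening is the last equivalence: you correctly use that $\tilde C$ and a general fibre are linearly equivalent, so $p_a(\tilde C)=g$; since $C$ integral gives $\tilde C$ integral, $g=0$ forces $\tilde C\cong\bP^1$, hence smoothness of $C$ away from $Q_0$ and rationality, i.e.\ the AMS condition. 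Your closing remark that the identification $m_{in}=m_i$ is ``the main obstacle'' undersells your own argument: you have already supplied it cleanly via $\tilde C\cdot E_i$, and this is indeed the essential use of the one-place-at-infinity hypothesis.
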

Consider now a pencil at infinity $\varrho$ given by an homogeneous polynomial $F$ of degree $d\geq 1$ but not necessarily defining a curve having one place at infinity. Assuming certain conditions on $F$ it is possible to describe the structure of the effective semigroup and the characteristic cone of the surface $Z$ obtained by blowing-up the configuration of base points of the pencil:
\begin{thm}\cite[Th. 3]{CPR2}
Assume that  $\varrho$ is an irreducible pencil at infinity such that $F$ factorizes as $F^{a_1}_1\cdots F^{a_s}_s$, where $d_1,\ldots,d_s,a_1,\ldots,a_s \in \bZ_+$, $\gcd(a_1,\ldots,a_s)=1$, $F_1,\dots,F_s$ are homogeneous polynomials of respective degrees $d_1,\ldots,d_s$ such that the curves defined by $F_i=0$ have one place at infinity and, if $s\geq 2$, $F_i\not\in \langle F_1, Z^{d_1} \rangle$ for some $i$, $2\leq i\leq s$. Then,
\begin{itemize}
\item[(a)] The effective semigroup $NE_S(Z)$ is spanned by the strict transforms on $Z$ of the following curves: the exceptional divisors, the line of infinity and the curves defined by the polynomials $F_i$, $1\leq i\leq s$.
\item[(b)] $\tilde{P}(Z)=P(Z)$.
\end{itemize}
\end{thm}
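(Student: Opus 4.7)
The plan is to deduce (a) by carefully describing the fiber structure of the morphism $h\colon Z\to \bP^1$ induced by $\varrho$ (using the framework preceding Lemma~\ref{faces}), and then to derive (b) by identifying the extremal rays of $P(Z)$ from (a) and exhibiting a base-point-free representative in each one. Throughout, I would use that $\varrho$ is irreducible, so by Proposition~\ref{irred} it is determined by its cluster $\K_{\I(\varrho)}$, and that the morphism $h$ has only finitely many reducible fibers.

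\textbf{For part (a).} The easy inclusion is clear, since $\tilde{L}$, the $\tilde{F}_i$, and the exceptional $\tilde{E}_j$ are effective. For the reverse inclusion, I would first identify the integral components of the reducible fibers of $h$. By the proposition characterizing such components via $G_{\varrho}\cdot \tilde{C}=0$, every such component on $\bP^2$ must appear as a component of some member $\lambda F+\mu Z^d=0$. Two obvious reducible members are $\{F=0\}$, which decomposes into $\{F_i=0\}$ with multiplicities $a_i$, and $\{Z^d=0\}$, whose unique integral component is the line at infinity. The central claim is then that no other member $\lambda F+\mu Z^d=0$ with $\lambda\mu\ne 0$ can be reducible; this is where the technical hypotheses enter. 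The one-place-at-infinity property of each $F_i$ forces every member of $\varrho$ to meet the line $\{Z=0\}$ exactly at the same points with the same local contact structure; the condition $\gcd(a_1,\ldots,a_s)=1$ prevents extracting a common root; and, when $s\ge 2$, the hypothesis $F_i\not\in\langle F_1,Z^{d_1}\rangle$ for some $i$ rules out $\varrho$ being the pullback of a pencil of lower degree. These combine to show the only reducible members of $\varrho$ are $\{F=0\}$ and $\{Z^d=0\}$. Having thus identified all contracted integral curves, I would decompose an arbitrary effective class $D$ on $Z$ into its $h$-vertical and $h$-dominant parts: the vertical part is a nonnegative integer combination of $\tilde{L}$, the $\tilde{F}_i$, and the $\tilde{E}_j$, while the dominant part, with positive intersection $n=D\cdot G_{\varrho}$, is reduced by subtracting $n$ copies of an explicit fiber-class representative. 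Either representative $\sum a_i \tilde{F}_i+(\text{effective exceptional})$ (from $\{F=0\}$) or $d\tilde{L}+(\text{effective exceptional})$ (from $\{Z^d=0\}$) lies in the claimed semigroup, so iterating gives the result.

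\textbf{For part (b).} The inclusion $\tilde{P}(Z)\subseteq P(Z)$ is automatic. By (a) and duality, $P(Z)$ is the polyhedral cone cut out by nonnegativity against the classes of $\tilde{L}$, the $\tilde{F}_i$ and the $\tilde{E}_j$. To obtain equality I would exhibit, for each extremal ray of $P(Z)$, a globally generated representative: the class $G_{\varrho}$ of a general fiber is base-point-free because the pencil extends to a morphism on $Z$; pullbacks $\sigma^*L$ of lines from $\bP^2$ are base-point-free; and appropriate $\bZ_{\ge 0}$-combinations of these together with explicit exceptional corrections, corresponding to the remaining extremal rays given by (a), remain globally generated by standard sheaf-theoretic arguments exploiting the two morphisms $h$ and $\sigma$.

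\textbf{Main obstacle.} The crucial and most delicate point is the claim in part (a) that the only reducible fibers of $h$ are $\{F=0\}$ and $\{Z^d=0\}$. This is where the arithmetic hypothesis $\gcd(a_1,\ldots,a_s)=1$ and the geometric hypothesis $F_i\not\in\langle F_1,Z^{d_1}\rangle$ both intervene simultaneously, and it requires a careful local analysis at each point $P_i$ on the line at infinity together with a factorization argument in $\bK(\bP^1)[X_1,X_2,X_3]$. Once this is settled, both the semigroup description in (a) and the equality of cones in (b) follow by essentially formal intersection-theoretic and base-point-free arguments of the same flavor as in Theorem~\ref{thm32}.
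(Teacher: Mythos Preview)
The paper does not give its own proof of this theorem; it is stated with the citation \cite[Th.~3]{CPR2} and no argument is supplied. So there is nothing in the paper to compare your proposal against, and what follows is a comment on the proposal itself.

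Your outline has two substantive gaps. First, the reduction step for an $h$-dominant integral curve $C$ is not valid as written: from $C\cdot G_{\varrho}=n>0$ you propose to ``subtract $n$ copies of a fiber'' and treat the difference as a vertical effective class, but $[C]-n[G_{\varrho}]$ has no reason to be effective. The decomposition of an effective divisor into its vertical and dominant integral components is fine, but each dominant component must then be shown \emph{individually} to lie in the semigroup generated by $\tilde{L},\tilde{F}_i,\tilde{E}_j$, and fiber subtraction does not do this. The argument in \cite{CPR2} (as in \cite{CPR} for the one-place case underlying Theorem~\ref{thm32}) proceeds instead by peeling off fixed components: one checks, using the explicit intersection numbers coming from the one-place-at-infinity data of each $F_i$, that whenever an effective class $D$ has negative intersection with one of the listed generators, that generator occurs in $|D|$ with positive coefficient; iterating reduces to the case where $D$ is nef against all generators, and then one shows such a $D$ is a nonnegative combination of fiber classes.

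Second, your ``central claim'' that the only reducible members of $\varrho$ are $\{F=0\}$ and $\{Z^d=0\}$ is neither proved nor, in general, needed in that strong form. What the theorem asserts is only that the effective semigroup is generated by the listed classes; it does not preclude other reducible fibers whose components happen to lie in that semigroup. You correctly flag this as the main obstacle, but the hand-waving about local analysis and factorization in $\bK(\bP^1)[X_1,X_2,X_3]$ is not a proof, and the hypotheses $\gcd(a_i)=1$ and $F_i\notin\langle F_1,Z^{d_1}\rangle$ are there to guarantee irreducibility of the pencil (hence finiteness of reducible fibers and applicability of Proposition~\ref{irred}), not to pin down exactly which fibers are reducible.
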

\section{Applications}\label{aplications}
\subsection{Applications to the Poincar\'e Problem}\label{foliations}
Some progress concerning the theory of foliations have been done by
using, as a tool, the language of configurations and clusters and,
also, considerations involving cones of curves  (sections
\ref{sect32} and \ref{sect33}). They are related to the so-called
Poincar\'e problem. We shall summarize some of such progress but,
previously, we will introduce briefly some background on the theory
of foliations.

\vsd \noindent {\em Background}\vsd

 We shall assume that $\bK=\gc$.
An (algebraic singular) {\it foliation} $\cf$ on a projective smooth
surface (a surface in the sequel) $X$ can be defined by a collection
$\{(U_i,\omega_i)\}_{i\in I}$, where $\{U_i\}_{i\in I}$ is an open
covering of $X$, $\omega_i$ is a non-zero regular differential
1-form on $U_i$ with isolated zeros and, for each couple $(i,j)\in
I\times I,$
\begin{equation}\label{eq}
\omega_i=g_{ij}w_j \mbox{\;\;\;on \;\;} U_i \cap U_j,
\mbox{\;\;\;\;} g_{ij}\in \co_X(U_i\cap U_j)^*.
\end{equation}
The singular locus $Sing(\cf)$ of $\cf$ is the discrete subset of $X$ defined by $$Sing(\cf)\cap U_i=\mbox{ zeroes of }w_i.$$
The transition functions $g_{ij}$ of a foliation $\cf$ define an
invertible sheaf ${\mathcal L}$ on $X$
and the relations (\ref{eq}) can be thought as defining relations
of a global section of the sheaf ${\mathcal L}\otimes \Omega_X^1$,
which has isolated zeros (because each $\omega_i$ has isolated
zeros). This section is uniquely determined by the foliation
$\cf$, up to multiplication by a non zero element in ${\mathbb C}$.
Conversely, given an invertible sheaf ${\mathcal L}$ on $X$, any
global section of ${\mathcal L}\otimes \Omega_X^1$ with isolated zeros
defines a foliation $\cf$.
Alternatively, a foliation can also be defined by a collection $\{(U_i,v_i)\}_{i\in I}$, where $v_i$ is a vector field on $U_i$ with isolated zeroes, satisfying analogous relations as in (\ref{eq}).
Given $P\in X$, a {\it (formal) solution of $\cf$ at $P$} will be
an irreducible element $f\in \widehat{\co}_{X,P}$ (where
$\widehat{\co}_{X,P}$ is the ${\rm m}_P$-adic completion of the
local ring $\co_{X,P}$ and ${\rm m}_P$ its maximal ideal) such
that the local differential 2-form $\omega_P \wedge df$ is a
multiple of $f$, $w_P$ being a local equation of $\cf$ at $P$. An
element in $\widehat{\co}_{X,P}$ will be said to be {\it invariant
by} $\cf$ if all its irreducible components are solutions of $\cf$
at $P$. An {\it algebraic solution} of $\cf$ will be an integral
(i.e. reduced and irreducible) curve $C$ on $X$ such that its
local equation at each point in its support is invariant by $\cf$.
Moreover, if every integral component of a curve $D$ on $X$ is an
algebraic solution, we shall say that $D$ {\it is invariant by}
$\cf$.
Seidenberg's result of
reduction of singularities \cite{seid} proves  that there is a
sequence of blowing-ups
\begin{equation}\label{seq}
X_{n+1} \mathop  {\longrightarrow} \limits^{\pi _{n} }
X_{n} \mathop {\longrightarrow} \limits^{\pi _{n-1} }  \cdots
\mathop {\longrightarrow} \limits^{\pi _2 } X_2 \mathop
{\longrightarrow} \limits^{\pi _1 } X_1 : = X
\end{equation}
such that the strict
transform $\cf_{n+1}$ of $\cf$ on the last obtained surface
$X_{n+1}$ has only certain type of singularities which cannot be
removed by blowing-up, called {\it simple} singularities. Such a
sequence of blowing-ups is called a {\it resolution} of $\cf$, and it
will be {\it minimal} if it is so with respect to the number of
involved blowing-ups. Assuming that the above sequence of blowing-ups is a minimal
resolution of $\cf$, we shall denote by $\C_{\cf}$ the associated
configuration $\{P_i\}_{i=1}^n$ given by the centers of the blowing-ups. Note that each point $P_i$ is an
ordinary (that is, not simple) singularity  of the foliation
$\cf_i$.
An exceptional divisor $B_{P_i}$ (respectively, a point $P_i\in
\C_{\cf}$)  is
called {\it non-dicritical} if it is invariant by the foliation
$\cf_{i+1}$ (respectively, all the exceptional divisors $B_{P_j}$,
with $P_j\geq P_i$, are non-dicritical). Otherwise, $B_{P_i}$
(respectively,  $P_i$) is said to be {\it dicritical}.
Particularizing to the projective plane, it holds that
a foliation $\cf$ on $\gp^2$ (of degree $r$) can be defined by means of a projective 1-form
$$
\Omega=A dX_1 + B dX_2 + C dX_3,
$$
where $A,B$ and $C$ are homogeneous polynomials of degree $r+1$
without common factors which satisfy the Euler's condition
$X_1A+X_2B+X_3C=0$ (see \cite{G-M}). From a more geometrical point
of view, $\cf$ can be regarded as the rational map $\Phi:\gp^2\cdots
\rightarrow \check{\gp}^2$ which sends a point $P$ to
$(A(P):B(P):C(P))$. The singular locus of $\cf$ is the set of points
where this rational map is not defined, that is,  the set of common
zeros of the polynomials $A, B$ and $C$. Moreover, a curve $D$ on
$\gp^2$ is invariant by $\cf$ if, and only if, $G$ divides the
projective 2-form $dG \wedge \Omega$, where $G(X_1,X_2,X_3)=0$ is an
equation of $D$.

\vsd \noindent {\em The Poincar\'e problem}\vsd

 We begin with a
differential equation with polynomial coefficients of order 1 and
degree 1, that is, of the type $Q(x,y)y'+P(x,y)=0$, with $P,Q\in
\gc[x,y]$ or, in a more general form, given by the vector field
$D=Q(x,y)\partial / \partial x-P(x,y) \partial/
\partial y$ or, equivalently, the differential form $\omega=P(x,y)dx
+ Q(x,y) dy$. It is said that the differential equation is {\it
algebraically integrable} or that it has a {\it rational first
integral} if there exists a rational function $R=\frac{f}{g}$,
$f,g\in \gc[x,y]$, such that $\omega \wedge dR=0$ (or equivalently
$D(R)=0$). This implies that the function $R$ is constant on the
solutions of the equation, that is, these are the curves whose
implicit equations are of the form $\lambda f + \mu g=0$, $\lambda,
\mu \in \gc$ (hence all solutions are algebraic curves). In 1891, H.
Poincar\'e \cite{Poi1} observed that, once we possess a bound on the
degree of a polynomial defining a general irreducible solution, we
can try  to find the rational first integral by making purely
algebraic computations. The problem of finding this bound in terms
of the degree of the foliation is classically known as the
Poincar\'e problem, although it was studied before by Darboux and
also by Painlev\'e and Autonne more or less at the same time than
Poincar\'e. From a more modern point of view, a vector field on the
affine plane is given by polynomial coefficients if and only if it
is the restriction of a foliation of the projective plane. So, the
Poincar\'e problem can be treated in this framework. Then, we shall
say that a foliation $\cf$ of $\bP^2$ has a {\it rational first
integral} if there exists a rational function $R$ of $\gp^2$ such
that $dR \wedge \Omega=0$. The substantial current interest in the
Poincar\'e problem was stimulated by Cerveau and Lins Neto in
\cite{ce-li}. In this paper, the problem is stated in a more general
form, avoiding the assumption of the algebraic integrability. That
is, if we assume that a foliation $\cf$ of $\bP^2$ has an algebraic
solution $C$, can we give conditions that allow us to bound $\deg C$
in terms of $\deg \cf$? The main result of \cite{ce-li} gives an
answer assuming that all the singularities of $C$ are simple nodes
(in this case $\deg C\leq \deg \cf+2$). Carnicer, in \cite{Car},
proves the same inequality in the case that $C$ does not pass
through dicritical singularities of $\cf$. However, there exist
examples showing that, in general, $\deg C$ cannot be bounded in
terms of $\deg \cf$. A remarkable counterexample is given in
\cite{l-n} (families of algebraically integrable foliations of fixed
degree and singularities of fixed analytic type are given, in such a
way that the general algebraic solutions have arbitrarily big
degree).

\vsd \noindent {\em Results using infinitely near points}\vsd

In \cite{Ca-Ca}, Carnicer and the first author extend in certain
manner the result given in \cite{Car} when dicritical singularities
appear. They use, as an important tool and unifying element in the
paper, the language of infinitely near points and proximity. In
fact, they prove proximity formulae for foliations \cite[Prop.
3.5]{Ca-Ca} and use them to give relations between local invariants
of an algebraic solution and local invariants of the foliation. To
state the main result, we need to introduce some notations.

Given a reduced invariant curve $C$ of a foliation $\cf$ of a
projective smooth surface $X$, let $\N_C$ be the configuration over
$X$ that consists of those points in $\C_{\cf}$ whose image on $X$
by the composition of blowing-ups given in (\ref{seq}) belongs to
$C$. For each $P\in \N_C$ denote by $s_P(\cf)$ the number of points
$Q\in \N_C$ such that $P\rightarrow Q$ and the exceptional divisor
$B_Q$ is non-dicritical, and set $\nu_P(C)$ (resp. $\nu_{P}(\cf)$)
the multiplicity at $P$ of the strict transform of $C$ (resp. the
minimum order of the coefficients of a local differential form
defining the strict transform of $\cf$ at $P$).
\begin{thm}\cite[Th. 1]{Ca-Ca}\label{Ca-Ca}
Let $\cf$ be a foliation of $\gp^2$ and $C$ a reduced curve which is invariant
by $\cf$. Let $\K=(\N_C=\{Q_i\}_{i=1}^t,\underline{m})$ be the cluster
such that $m_i:=\nu_{Q_i}(C)+ s_{Q_i}(\cf)-\nu_{Q_i}(\cf)-1$ for all $i\in \{1,\ldots,t\}$. Let $d$ be a non-negative integer such that the linear system
${\mathcal L}_d(\K)$ (of curves of degree $d$ passing through $\K$) is not empty. Then: $$ \deg (C)\leq \deg(\cf)+2+d.$$
\end{thm}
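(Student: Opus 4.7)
The plan is to compare the strict transform $\tilde C$ of $C$ on the minimal resolution of $\cf$ with the virtual transform of a curve $D\in\mathcal{L}_d(\K)$, via an intersection-theoretic identity on that resolved surface. Write $\pi:\tilde X:=X_{n+1}\to\bP^2$ for the composition of blowing-ups in (\ref{seq}), $\tilde\cf$ and $\tilde C$ for the strict transforms of $\cf$ and $C$, and set $H:=\pi^*\co_{\bP^2}(1)$, $r:=\deg\cf$, $c:=\deg C$, so that $\tilde C\sim cH-\sum_{Q_i\in\N_C}\nu_{Q_i}(C)\,E_i^*$ in $\mathrm{Pic}(\tilde X)$.

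First, I would verify that $\K=(\N_C,(m_i))$ is consistent as an idealistic cluster. The required proximity inequalities follow by combining the standard proximity inequalities for the invariant curve $C$ (applied to $\nu_{Q_i}(C)$) with the proximity formula for the foliation proved in \cite[Prop.\ 3.5]{Ca-Ca}, which quantifies how $\nu_P(\cf)$ drops across non-dicritical blow-ups and is exactly what controls the correction $s_{Q_i}(\cf)-\nu_{Q_i}(\cf)-1$ along the tree $\N_C$.

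Second, the central claim is that on $\tilde X$ the divisor class $(r+2)H-\sum_{Q_i\in\N_C}m_i\,E_i^*-\tilde C$ admits an effective representative. This is the foliated analogue of a classical Noether-type polar identity, and is where the invariance of $C$ under $\cf$ enters crucially. An effective representative is to be produced from a suitable tangency or polar divisor naturally associated with $\tilde\cf$ on $\tilde X$ — whose pushforward to $\bP^2$ has degree $r+2$ (essentially $\deg N_\cf$) and which contains $\tilde C$ precisely because $C$ is $\cf$-invariant. The precise coefficient with which each $E_i^*$ appears in the correction is exactly $m_i$, a computation equivalent to the proximity formula for foliations invoked in step one.

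Third, the endgame is immediate. Since $\mathcal{L}_d(\K)\neq\emptyset$, pick $D\in\mathcal{L}_d(\K)$; by Proposition \ref{prop:24} its virtual transform $\check D$ is effective with class $dH-\sum m_i\,E_i^*$. Adding $\check D$ to the effective representative from step two yields that $(r+2+d)H-\tilde C-2\sum m_i\,E_i^*$ is linearly equivalent to an effective divisor; intersecting with the nef class $H$ (which satisfies $H\cdot E_i^*=0$ and $H\cdot\tilde C=c$) gives $(r+2+d)-c\ge 0$, as required.

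The main obstacle is the second step — producing the effective representative and identifying the coefficients of the exceptional divisors. The foliation hypothesis is indispensable here (without it no such bound could hold), and the precise combinatorial bookkeeping of the dicritical exceptional components is what forces the weights $m_i$ to take the specific form stated; this refines Carnicer's argument \cite{Car} for his clean bound $c\leq r+2$ in the case where no dicritical singularity of $\cf$ lies on $C$.
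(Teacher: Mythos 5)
There is a genuine gap, and it sits exactly at your step two. The claim that $(r+2)H-\sum_{Q_i\in\N_C}m_i E_i^*-\tilde C$ is effective on $\tilde X$ cannot be correct: since $H=\pi^*\co_{\bP^2}(1)$ is nef with $H\cdot E_i^*=0$, $H^2=1$ and $H\cdot\tilde C=\deg C$, intersecting that class with $H$ would already give $\deg C\leq \deg\cf+2$ unconditionally, with no reference to $d$ at all. That unconditional bound is false in the dicritical case --- the paper explicitly recalls Lins Neto's examples of invariant curves of arbitrarily large degree for foliations of fixed degree --- and indeed the whole point of the weights $m_i=\nu_{Q_i}(C)+s_{Q_i}(\cf)-\nu_{Q_i}(\cf)-1$ is that they become positive precisely at the dicritical points where such an effective representative fails to exist. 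Your step three then confirms the structural problem: adding $\check D$ and intersecting with $H$ kills every $E_i^*$ coefficient, so the hypothesis $\mathcal L_d(\K)\neq\emptyset$ does no work in your argument; it merely inflates a bound you (incorrectly) already had by $+d$.

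The hypothesis has to enter multiplicatively through the exceptional multiplicities, not additively through a divisor class. The mechanism (visible in Theorem \ref{ggg}, from which the paper derives this statement, and in the proximity formulae of \cite[Prop.~3.5]{Ca-Ca}) is a local-global balance: an index-type identity for the invariant curve gives $(D_{\cf}-C)\cdot C\geq -\sum_i m_i\,\nu_{Q_i}(C)$, i.e.\ the local excesses at dicritical points are controlled by $\sum_i m_i\,\nu_{Q_i}(C)$, and the existence of $D\in\mathcal L_d(\K)$ is then used via the Noether intersection inequality $d\cdot\deg C=D\cdot C\geq \sum_i \nu_{Q_i}(D)\,\nu_{Q_i}(C)\geq\sum_i m_i\,\nu_{Q_i}(C)$ to absorb that excess, yielding $(\deg\cf+2+d-\deg C)\deg C\geq 0$. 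Your first step (checking the proximity inequalities for $\K$) and your final intersection computation are fine as far as they go, but without replacing step two by this excess-versus-$D\cdot C$ comparison the proof does not go through. Note also that this survey does not reprove the theorem; it quotes \cite[Th.~1]{Ca-Ca} and observes that it is the case $X=\gp^2$ of Theorem \ref{ggg}.
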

As an application, if either we fix the number of tangents at the
singular points of $\cf$, or we fix the equisingularity types of the
curve at the singular points of $\cf$, then concrete values of $d$
can be obtained from the fixed data computing the linear system
${\mathcal L}_d(\K)$ (this involves the resolution of a system of
linear equations). The obtained bounds will be valid for particular
types of invariant curves. It is worth adding that the above
mentioned results of Cerveau and Lins Neto, and Carnicer are
particular cases of Theorem \ref{Ca-Ca} since, in both cases, it can
be proved that $\nu_{Q_i}(\cf)+1\geq \nu_{Q_i}(C)+ s_{Q_i}(\cf)$ for
all $Q_i\in \N_C$ (then, the results follow by taking $d=0$). In
\cite{C-G-GF-R}, the result given in Theorem \ref{Ca-Ca} is
generalized for foliations of arbitrary projective smooth surfaces.
In this case, one looses the concept of degree and, in addition to
configurations and proximity, the use of divisors and Intersection
Theory is required. The main result is the following one:
\begin{thm}\cite[Th. 2]{C-G-GF-R}\label{ggg}
Let $X$ be a projective smooth algebraic surface, $\cf$ a foliation of $X$,
$C$ a reduced curve which is invariant by $\cf$ and $\K=(\N_C,\underline{m})$ as in
the statement of Theorem \ref{Ca-Ca}. If  $H$ is a divisor such
that the linear system ${\mathcal L}_{H}(\K)$ is not empty then:
$$(D_{\cf}+H-C)\cdot C_1\geq 0,$$
where $D_{\cf}$ is a Cartier divisor in the divisor class defined by the transition functions $g_{ij}$
associated with the foliation (see the beginning of the section) and $C_1$ is the
reduced curve consisting of the components of $C$ not contained in the support of $H$.
\end{thm}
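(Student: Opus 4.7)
The plan is to mimic the strategy of Theorem \ref{Ca-Ca}, lifting the whole situation to the minimal resolution of $\cf$ and reducing the intersection inequality on $X$ to the combination of (i) a virtual-transform argument producing an effective auxiliary divisor and (ii) a proximity formula for foliations. Let $\pi:\tilde X:=X_{n+1}\to X$ denote the minimal resolution of $\cf$ as in (\ref{seq}), and write $\tilde C$, $\tilde C_1$ for the strict transforms on $\tilde X$ of the curves $C$ and $C_1$.

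First I would pick any $D\in{\mathcal L}_H(\K)$ and consider its virtual transform $\check D$ on $\tilde X$ with respect to the cluster $\K=(\N_C,\underline m)$. By (the global analogue of) Proposition \ref{prop:24}, $\check D$ is effective and linearly equivalent to $\pi^*H-D(\K)$. Since $C_1$ has, by definition, no component in the support of $H$, the strict transform $\tilde C_1$ shares no integral component with $\check D$, so $\check D\cdot\tilde C_1\ge 0$. Writing this out using $E_i^*\cdot\tilde C_1=\nu_{Q_i}(C_1)$ (the multiplicity of the successive strict transform of $C_1$ at $Q_i$) and the projection formula $\pi^*H\cdot\tilde C_1=H\cdot C_1$, one obtains the explicit bound
\begin{equation*}
H\cdot C_1\;\ge\;\sum_{Q_i\in\N_C}m_i\,\nu_{Q_i}(C_1)
=\sum_{Q_i\in\N_C}\bigl(\nu_{Q_i}(C)+s_{Q_i}(\cf)-\nu_{Q_i}(\cf)-1\bigr)\nu_{Q_i}(C_1).
\end{equation*}
Thus the inequality to be proved, $(D_\cf+H-C)\cdot C_1\ge 0$, is reduced to the intrinsic foliation inequality
\begin{equation*}
(D_\cf-C)\cdot C_1+\sum_{Q_i\in\N_C}\bigl(\nu_{Q_i}(C)+s_{Q_i}(\cf)-\nu_{Q_i}(\cf)-1\bigr)\nu_{Q_i}(C_1)\;\ge\;0,
\end{equation*}
a statement that no longer involves $H$ and that depends only on the pair $(\cf,C)$ and the minimal resolution of $\cf$.

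To establish this last inequality I would follow the same route as in \cite{Ca-Ca}, replacing degree computations on $\bP^2$ by intersection products on $\tilde X$. The idea is to compare $\pi^*D_\cf$ with the divisor $D_{\tilde\cf}$ attached to the strict transform $\tilde\cf$: a standard local computation on each blow-up of the sequence (\ref{seq}) yields an expression of the form $\pi^*D_\cf=D_{\tilde\cf}+\sum_{Q_i\in\C_\cf}(\nu_{Q_i}(\cf)+\varepsilon_i)E_i^*$ where $\varepsilon_i\in\{0,1\}$ records whether $B_{Q_i}$ is non-dicritical or dicritical (equivalently, encodes the contribution $s_{Q_i}(\cf)$ at the consecutive points). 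Intersecting with $\tilde C_1$, grouping the terms according to the proximity relations on $\N_C$, and invoking the proximity formulae for foliations proved in \cite[Prop.~3.5]{Ca-Ca} (which still apply locally, being of local analytic nature), the excess $(D_\cf-C)\cdot C_1$ can be rewritten as a sum of local contributions at the points of $\N_C$; each such contribution is shown to dominate the corresponding local term on the left-hand side. The non-negativity at every simple singularity of $\tilde\cf$ lying on $\tilde C_1$ follows from the classification of simple (reduced) singularities together with the fact that $\tilde C_1$ is invariant by $\tilde\cf$.

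The main obstacle is precisely the last step: one must carry out the proximity bookkeeping globally on an arbitrary surface, keeping track of dicritical versus non-dicritical points throughout $\N_C$ and ensuring that the sign of each local contribution is controlled in the same way as in the plane case. In \cite{Ca-Ca} the ambient $\bP^2$ made the balance automatic through $\deg$, whereas here one loses the degree function and must handle the intersection numbers directly; in particular the contribution from the components of $C$ not in $\N_C$ (those not passing through singularities of $\cf$) has to be absorbed by observing that on such components $\cf$ induces a smooth foliation, hence contributes non-negatively to $(D_\cf-C)\cdot C_1$ by an adjunction-type argument. Once this local--global matching is in place, combining it with the inequality $H\cdot C_1\ge\sum m_i\nu_{Q_i}(C_1)$ obtained above yields $(D_\cf+H-C)\cdot C_1\ge 0$.
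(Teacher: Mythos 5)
The survey does not actually prove this theorem; it imports it verbatim from \cite{C-G-GF-R}, so there is no internal proof to measure your attempt against and I can only judge the proposal on its own merits. Your first stage is correct and is the expected reduction: choosing an effective member $D$ of ${\mathcal L}_H(\K)$ containing no component of $C_1$ in its support, its virtual transform is an effective divisor in $|\pi^*H-D(\K)|$ with no common component with $\tilde C_1$, which yields $H\cdot C_1\geq\sum_{Q_i\in\N_C}m_i\,\nu_{Q_i}(C_1)$ and reduces the theorem to the $H$-free inequality
$$(D_{\cf}-C)\cdot C_1+\sum_{Q_i\in\N_C}\bigl(\nu_{Q_i}(C)+s_{Q_i}(\cf)-\nu_{Q_i}(\cf)-1\bigr)\,\nu_{Q_i}(C_1)\;\geq\;0.$$
(One small caveat: the statement defines $C_1$ through the support of $H$, not of a general member of $|H|$, so you should either take $D=H$ itself or justify that a member through $\K$ can be chosen avoiding the components of $C_1$; as written you silently identify the two.)

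The genuine gap is the second stage. The displayed inequality is not a stepping stone toward the theorem --- it essentially \emph{is} the theorem, namely the ``intersection inequality for foliations'' that gives \cite{C-G-GF-R} its title --- and your treatment of it is a description of a strategy rather than an argument: ``grouping the terms according to the proximity relations \dots each such contribution is shown to dominate the corresponding local term'' is precisely the part that has to be proved, and you acknowledge as much in your final paragraph. Two points in particular cannot be waved through. First, the sign of the local contributions at dicritical points is exactly the delicate issue in this circle of problems (it is why $\deg C$ is not bounded by $\deg\cf$ alone, cf. the Lins Neto counterexamples), so the claim that each contribution dominates the corresponding term requires the explicit index computation at the simple singularities of $\tilde\cf$ lying on $\tilde C_1$, combined with the proximity formulae, to see that the total defect is exactly absorbed by $\sum_i\bigl(\nu_{Q_i}(C)+s_{Q_i}(\cf)-\nu_{Q_i}(\cf)-1\bigr)\nu_{Q_i}(C_1)$; the correction terms $s_{Q_i}(\cf)$ and $-1$ have to \emph{emerge} from that computation, not be assumed. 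Second, writing $C=C_1+C_2$ with $C_2$ supported on $H$, the cross term $-C_1\cdot C_2$ enters $(D_{\cf}-C)\cdot C_1$ with the unfavourable sign and is nowhere accounted for in your sketch. Until these are carried out, what you have is a correct reduction plus a plan for the hard half, not a proof.
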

\begin{rem}
{\rm
If we take, in Theorem \ref{ggg}, $X=\gp^2$, then one has that $D_{\cf}=(\deg(\cf)+2)L$,
where $L$ is a line. Setting $d=\deg(H)$ one obtains the inequality $(\deg(\cf)+2+d-\deg(C)) \deg(C)\geq 0$,
that is, $\deg(C)\leq \deg(\cf)+2+d$. Hence, one recovers Theorem \ref{Ca-Ca}.
}
\end{rem}
Also, a generalization of Theorem \ref{Ca-Ca} for 1-dimensional foliations on the
projective space $\gp^n$ is given in \cite{Ca-Ca-GF}, also expressed in terms of
infinitely near points.
The above results have been improved by Esteves and Kleiman in
\cite{K-E} using arguments which do not involve infinitely near
points.
A recent result concerning the Poincar\'e problem is given by C.
Galindo and the third author in \cite{galmon2}. In it, it is
provided an algorithm to decide whether a foliation of $\gp^2$ has a
rational first integral and to compute it in the affirmative case.
This algorithm runs whenever we assume the polyhedrality of the cone
of curves of the surface obtained by blowing-up the configuration
dicritical points in $\C_{\cf}$, which we shall denote by $\B_{\cf}$
(this happens, for instance, when this configuration is
P-sufficient). The inputs of the algorithm are the projective
differential 1-form $\Omega$ defining the foliation, the
configuration $\B_{\cf}$ of dicritical points and the non-dicritical
exceptional divisors coming from $\B_{\cf}$. We shall explain now
the main ideas that give rise to that result.
Assume now that $\cf$ is a foliation of $\gp^2$ and let
$\pi_{\cf}:Z_{\cf}\rightarrow \gp^2$ be the composition of
blowing-ups of the configuration $\B_{\cf}$. We shall also assume
that the cardinality of $\B_{\cf}$ is greater than 1.
If $\cf$ has a rational first integral one has the following
fundamental facts:
\begin{itemize}
\item[(1)] A rational first integral $R$ can be taken to be the
quotient of two homogeneous polynomial of the same degree $d$, $F$
and $G$, such that the pencil $\varrho\subseteq |\OO_{\gp^2}(d)|$
that they provide is irreducible.
\item[(2)] $\B_{\cf}$ coincides with the
configuration of base points $\C_{\I(\varrho)}$ of the pencil
$\varrho$ \cite[Prop. 1]{galmon2}.
\item[(3)] The algebraic solutions of $\cf$ are the integral
components of the curves in the pencil $\varrho$ and the images of
their strict transforms on $A(Z_{\cf})$, together with the strict
transforms of the {\it vertical} exceptional divisors, generates the
face of $NE(Z_{\cf})$ given by $G_{\varrho}^{\perp}\cap NE(Z_{\cf})$
see Section \ref{sect33}, after Lemma \ref{faces}). Moreover, the
{\it vertical} exceptional divisors are exactly the {\it
non-dicritical} exceptional divisors (as a consequence of
\cite[Prop. 2.5.2.1]{julio} and \cite[Exercise 7.2]{casas}).
\end{itemize}
Set $n$ the cardinality of the configuration $\B_{\cf}$ (then, $\dim
A(Z_{\cf})=n+1$).
\begin{defn}
{\rm An {\it independent system of algebraic solutions} for $\cf$
will be a set $S=\{C_1,\ldots,C_s\}$ such that  the system
$$\A_S:=\{\tilde{C}_1,\ldots, \tilde{C_s},E_{i_1},\ldots, E_{i_{n-s}}\}\subseteq A(Z_{\cf})$$ is
$\gr$- linearly independent, where $\tilde{C}_i$ denotes the strict
transform of $C_i$ on $Z_{\cf}$ and $\{E_{i_k}\}_{k=1}^{n-s}$ are
the strict transforms of the non-dicritical exceptional divisors
corresponding to points in $\B_{\cf}$. }
\end{defn}
If the cone of curves $NE(Z_{\cf})$ is polyhedral and $\cf$ has a
rational first integral one has that the face
$G_{\varrho}^{\perp}\cap NE(Z_{\cf})$ has codimension 1 and,
therefore, it is spanned by $n$ $\gr$-linearly independent
generators of extremal rays. Moreover, due to the polyhedrality of
$NE(Z_{\cf})$ and the inclusion $Q(Z_{\cf})\subseteq NE(Z_{\cf})$,
one has that a ray in $A(Z_{\cf})$ is an extremal ray of
$NE(Z_{\cf})$ if and only if it is spanned by the image in
$A(Z_{\cf})$ of an integral curve on $Z_{\cf}$ with strictly
negative self-intersection. Therefore, we can conclude the following
\begin{prop}
If $NE(Z_{\cf})$ is polyhedral and $\cf$ has a rational first
integral then there exists an independent system of algebraic
solutions $S$ such that $\tilde{C}^2<0$ for all $C\in S$. Moreover,
the hyperplane $G_{\varrho}^{\perp}$ is generated by $\A_S$.
\end{prop}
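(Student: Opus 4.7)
The plan is to analyze the face $F := G_\varrho^\perp \cap NE(Z_\cf)$ of the cone of curves, exploiting the polyhedrality hypothesis together with facts $(1)$–$(3)$ recalled just above the statement. Recall from the proof of Lemma~\ref{faces} that $G_\varrho^2 = 0$ and $G_\varrho$ is nef, so $G_\varrho^\perp$ is a supporting hyperplane of $NE(Z_\cf)$ and $F$ is a face of it. I would first identify $\dim F = n$. Since $G_\varrho$ is the class of a general fiber of $h : Z_\cf \to \bP^1$, the projection formula gives $h_*(D) \cdot [\mathrm{pt}] = D \cdot G_\varrho$ for every $D \in A(Z_\cf)$, so $G_\varrho^\perp$ coincides with $\ker(h_*)$, which is generated by the classes of integral curves contracted by $h$; by fact~$(3)$ these are precisely the strict transforms of algebraic solutions and of non-dicritical exceptional divisors, and all such classes lie in $F$.

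Using polyhedrality of $NE(Z_\cf)$, the face $F$ is polyhedral and generated by its extremal rays, each of which is also extremal in $NE(Z_\cf)$. Because $G_\varrho^\perp$ is tangent to $Q(Z_\cf)$ at $G_\varrho$ (since $G_\varrho^2 = 0$), the intersection $G_\varrho^\perp \cap Q(Z_\cf)$ reduces to $\bR_{\geq 0} G_\varrho$; hence every extremal ray of $F$ other than possibly $\bR_{\geq 0} G_\varrho$ lies strictly outside $Q(Z_\cf)$ and, by the discussion in Section~\ref{sect32}, is spanned by the class of an integral curve with strictly negative self-intersection. Being contracted by $h$, each such curve is either a strict transform of an integral component of a fiber of $\varrho$ (hence an algebraic solution) or a strict transform of a non-dicritical exceptional divisor.

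The crucial step is to show that the negative self-intersection extremal rays of $F$ already span $G_\varrho^\perp$, i.e.\ that $G_\varrho$ itself lies in their linear span. Choose a fiber and write $h^{-1}(p) = \sum_i a_i \tilde{C}_i + \sum_j b_j V_j$ with $\tilde{C}_i$ the strict transforms of integral components and $V_j$ non-dicritical exceptional divisors. If this fiber has more than one distinct component, then $G_\varrho = h^{-1}(p)$ is a positive combination of such classes, and connectedness of the fibers of $h$ (which follows from irreducibility of $\varrho$ via Stein factorization) forces each $\tilde{C}_i$ appearing to satisfy $\tilde{C}_i^2 < 0$, since $\tilde{C}_i \cdot G_\varrho = 0$ and $\tilde{C}_i$ must meet some other fiber component with strictly positive intersection. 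If, on the contrary, every fiber consists of a single component $a \tilde{C}$ with no vertical, then there would be no non-dicritical exceptional divisors at all and every algebraic solution class would be a rational multiple of $G_\varrho$, forcing $\dim F = 1$ and hence $n = 1$, against the standing hypothesis $|\B_\cf| > 1$.

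Extracting an $\bR$-basis of $G_\varrho^\perp$ from the negative self-intersection extremal rays of $F$ finishes the proof: the algebraic solutions among the chosen generators form the desired $S = \{C_1, \ldots, C_s\}$ with $\tilde{C}_i^2 < 0$ for all $i$, and the non-dicritical exceptional divisors among them are the $E_{i_k}$ completing $\A_S$ to a basis of $G_\varrho^\perp$. The principal obstacle is exactly the dichotomy in the previous paragraph: the possibility that $\bR_{\geq 0} G_\varrho$ is essential for spanning $G_\varrho^\perp$ must be excluded, and this rests on combining the connectedness of the fibers of $h$ with the hypothesis $n \geq 2$.
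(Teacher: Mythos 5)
Your overall strategy matches the paper's: use polyhedrality to reduce to extremal rays of the face $F=G_{\varrho}^{\perp}\cap NE(Z_{\cf})$, observe that these are classes of integral curves of negative self-intersection contracted by $h$, and identify such curves as strict transforms of algebraic solutions or of non-dicritical exceptional divisors. However, there is a genuine gap at the point where you need $F$ to span the whole hyperplane $G_{\varrho}^{\perp}$ (equivalently, that $F$ has codimension one in $A(Z_{\cf})$). Your justification in the first paragraph --- that $\ker(h_*)=G_{\varrho}^{\perp}$ ``is generated by the classes of integral curves contracted by $h$'' --- does not follow from the projection formula together with fact (3): that fact and Lemma \ref{faces} only say that the contracted curves generate the \emph{face} $F$, not the hyperplane. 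The assertion is in fact false without polyhedrality: for a pencil of cubics through nine general points, $Z_{\cf}$ would be a general rational elliptic surface, all fibers are irreducible, there are no vertical exceptional divisors, and the contracted curves span only the one-dimensional ray $\bR G_{\varrho}$ inside the nine-dimensional hyperplane $\ker(h_*)$. Your third paragraph then uses the same unproved dimension count circularly: the contradiction ``$\dim F=1$, hence $n=1$'' in the all-fibers-irreducible case presupposes $\dim F=n$.

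The missing ingredient is precisely where polyhedrality must enter: since $P(Z_{\cf})=NE(Z_{\cf})^{\vee}$ is a polyhedral cone contained in $Q(Z_{\cf})$, and $G_{\varrho}$ is nef with $G_{\varrho}^{2}=0$, the class $G_{\varrho}$ lies on the boundary of the strictly convex cone $Q(Z_{\cf})$ and hence spans an extremal ray of $P(Z_{\cf})$; by the duality between faces of mutually dual polyhedral cones, the dual face $F=NE(Z_{\cf})\cap G_{\varrho}^{\perp}$ has dimension $n$. Once this is in place, your paragraph showing that $G_{\varrho}$ lies in the span of the negative extremal rays becomes essentially unnecessary; incidentally, your degenerate case can be excluded more directly, since a one-dimensional $F$ would make $\bR_{\geq 0}G_{\varrho}$ an extremal ray of $NE(Z_{\cf})$ with self-intersection zero, which is impossible for a polyhedral cone containing $Q(Z_{\cf})$ when $\dim A(Z_{\cf})\geq 3$. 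With the codimension-one statement proved, the rest of your argument --- extracting a basis of $G_{\varrho}^{\perp}$ from negative extremal generators and sorting them into algebraic solutions and non-dicritical exceptional divisors --- is correct and coincides with what the paper does.
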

We assume from now on that $NE(Z_{\cf})$ is a polyhedral cone. The
above mentioned algorithm consists of two parts. In the first one,
from the data $\{\Omega, \B_{\cf}, (E_{i_1},\ldots,E_{i_{n-s}})\}$
(which comes from the resolution of the singularities of $\cf$),
either one concludes that $\cf$ has no rational first integral, or
an independent system of algebraic solutions is returned \cite[Alg.
3]{galmon2}. The algorithm generates a strictly increasing sequence
of convex cones $V_0 \subset V_1\subset \cdots$ such that $V_0$ is
generated by $\{{E}_Q\}_{Q\in {\mathcal B}_{\cf}}$ and $V_i$ is
generated by $V_{i-1} \cup \{\tilde{Q}_i\}$ for $i\geq 1$, where $
Q_1, Q_2,\dots$ are curves on $\gp^2$ (ordered with non-decreasing
degrees) satisfying certain conditions, being $\tilde{Q}_i^2<0$
among them. We stop when one of the following cases occurs: (1)
there exists a subset $S$ of $\{Q_1,\ldots, Q_i\}$ which is an
independent system of algebraic solutions, or (2)
$Q(Z_{\cf})\subseteq V_i$. If case (2) holds but (1) does not occur,
then we conclude that {\it $\cf$ has no rational first integral}
(see the explanation of Algorithm 3 of \cite{galmon2}). The second
part of the algorithm \cite[Alg. 2]{galmon2} will be applied  when,
in the first part, an independent system of algebraic solutions $S$
has been obtained. In the case that $\cf$ had rational first
integral, the hyperplane $G_{\varrho}^{\perp}$ would be the one
generated by $\A_S$. Hence, the first step will be to compute the
primitive (in the lattice $\gz^{n+1}\cong {\rm
Pic}(Z_{\cf})\subseteq A(Z_{\cf})$) class $T_{\cf,S}$ such that the
hyperplane $T_{\cf,S}^{\perp}$ is the one generated by $\A_S$ and
$T_{\cf,S}\cdot \pi_{\cf}^*L>0$ for a line $L$ (see \cite[page
8]{galmon2}). Notice that if $\cf$ had a rational first integral
then $G_{\varrho}$ should be equal to $\alpha T_{\cf,S}$ for some
integer $\alpha>0$. By \cite[Prop. 4]{galmon2} one of the following
conditions is satisfied:
\begin{itemize}
\item[(1)] $T_{\cf,S}^2\not=0$. In this case {\it $\cf$ has not a
rational first integral} (since, otherwise, the equality
$G_{\varrho}^2=0$ gives a contradiction).
\item[(2)] The coefficients of all the elements of $\A_S$ in the
decomposition of $T_{\cf,S}$ as linear combination of $\A_S$ are
strictly positive. In this case \cite[Th. 2]{galmon2} shows the
existence of a unique possible value for $\alpha$. If the dimension
of the space of global sections $H^0(\gp^2,
{\pi_{\cf}}_*\OO_{Z_{\cf}}(\alpha T_{\cf,S}))$ is not 2, then {\it
$\cf$ has not a rational first integral} (otherwise we have a
contradiction, since $V_{\varrho}$ would coincide with this space by
Prop. \ref{irred}). If the above dimension is 2, then one can
compute a basis $\{F,G\}$ of the space. If $\Omega \wedge d(F/G)=0$
then {\it $F/G$ is a first integral}; otherwise, {\it $\cf$ is not
algebraically integrable}.
\item[(3)] The set $\{\lambda\in \bZ_+
\mid h^0(\gp^2,{\pi_{\cf}}_*\co_{Z_{\cf}}(\lambda T_{{\mathcal
F},S}))\geq 2\}$ is not empty. In this case one can take $\alpha$ to
be the minimum of this set and proceed as in the above case.
\end{itemize}
\begin{rem}
{\rm Although the algorithm is expressed, for clarity, in terms of
divisors, it involves the computation of linear systems of plane
curves coming from clusters. For instance, to find the curves
$Q_1,Q_2,\ldots$ one takes clusters $\K=(\B_{\cf},\underline{m})$
and, beginning with $d=1$ and increasing $d$ successively, computes
(for each fixed value of $d$) all the linear systems ${\mathcal
L}_d(\K)$ with $(d; \underline{m})$ satisfying certain properties:
$d^2-\sum m_i^2<0$, the proximity inequalities and other properties
coming from the adjunction formula. These properties come from the
fact that we want that $\tilde{Q_i}$ be linearly equivalent to $d
{\pi_{\cf}}^* L -D(\K)$ ($L$ being a general line). The computation
of basis of the linear systems involves the resolution of systems of
linear equations. We are finding non-empty linear systems 
${\mathcal L}_d(\K)$ (whose projective dimension will be, a fortiori, equal to
0) whose unique curve $Q_i$ passes {\it effectively} through the
cluster $\K$.
}
\end{rem}
\subsection{Applications to the Harbourne-Hirschowitz
Conjecture}\label{hh}
Clusters of infinitely near points have been applied also to obtain
results dealing with the so-called Harbourne-Hirschowitz Conjecture
and related problems. Fixing $r+1$ points $P_0,P_1,\ldots,P_r$ of
$\gp^2$ in general position and given $r+1$ non-negative integers
$\underline{m}=(m_0,m_1,\ldots,m_r)$, the linear system ${\mathcal
L}_d(\underline{m})$ of plane projective curves of fixed degree $d$
having multiplicity $m_i$ (or larger) at $P_i$ for each $i$, has an
expected dimension (attained when all the conditions being imposed
are independent):
$$\edim
\cL_d(\underline{m}):=\max \left\{\frac{d(d+3)}{2}-\sum_{i=0}^n
\frac{m_i(m_i+1)}{2},-1\right\}.$$ Those systems whose dimension is
larger than the expected one are called {\it special}. The
Harbourne-Hirschowitz Conjecture intends to give a description of
all special linear systems. One of the equivalent formulations of
this conjecture asserts that a linear system is special if and only
if it has a multiple fixed component such that its strict transform
on the surface obtained by blowing-up the points $P_0,P_1\ldots,P_r$
is a $(-1)$-curve. This conjecture goes back to B. Segre \cite{seg}
and it has been reformulated by several authors (see \cite{harb1},
\cite{gim1}, \cite{hir}, \cite{harb4}, \cite{cil}, \cite{cil2}, and
\cite{cil3} for a survey).
There exists an extensive literature either giving partial proofs of
the conjecture or dealing with related subjects. It is out of the
scope of this paper to give a global overview of the topic;
\cite{harb3}, \cite{Mir}, \cite{cil3} and references given therein
will be helpful for the interested reader. We mention here the
result given in \cite{roe3}, where the semicontinuity theorem and a
sequence of specializations to constellations of infinitely near
points are used to obtain an algorithm for computing an upper bound
for the least degree $d$ for which $r+1\geq 9$ general points of
given multiplicities $m_0,m_1,\ldots,m_r$ impose independent
conditions to the linear system of curves of degree $d$ (that is,
the {\it regularity} of the system of multiplicities); also, an
explicit formula for a bound is obtained when all the multiplicities
are equal to $m$: $d+2\geq (m+1)(\sqrt{r+2.9}+\pi/8)$ (the
Harbourne-Hirschowitz Conjecture implies that the imposed conditions
are independent when $d(d+3)\geq (r+1)m(m+1)-2$).
In \cite{mons} it is provided an unbounded family of systems of
multiplicities $(m_i)_{i=0}^r$ for which the Harbourne-Hirschowitz
Conjecture is satisfied (considering $\mathbb{C}$ as the base
field). This result is obtained specializing the $r+1$ general
points to the configuration of base points of the pencil at infinity
defined by an Abhyankar-Moh-Suzuki curve and using semicontinuity.
The statement is the following one:
\begin{thm}\label{amsc}
Let $\varrho$ be the pencil at infinity defined by an
Abhyankar-Moh-Suzuki curve that is not a line and let
$\C_{\I(\varrho)}=\{Q_0,Q_1,\ldots,Q_r\}$ be its constellation of
base points. Let $\underline{m}=(m_0,m_1,\ldots,m_r)$ be a system of
multiplicities such that the cluster
$(\C_{\I(\varrho)},\underline{m})$ satisfies the proximity
inequalities $m_i-\sum_{Q_j\rightarrow Q_i}m_j\geq 0$, $0\leq i\leq
r$, the second one being a strict inequality (that is,
$m_1-\sum_{Q_j\rightarrow Q_1}m_j>0$). If the linear system
(supported at general points) $\cL_d(\underline{m})$ is special,
then it has a multiple fixed component whose strict transform on the
surface obtained by blowing-up the general points is a $(-1)$-curve.
Furthermore, this curve is the line joining the points corresponding
with the multiplicities $m_0$ and $m_1$.
\end{thm}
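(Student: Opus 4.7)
The plan is to combine semicontinuity with the very rigid structure of the sky $Z := X_{\C_{\I(\varrho)}}$ of an Abhyankar--Moh--Suzuki pencil. I would begin by building a flat family of blow-ups interpolating between the blow-up of $\bP^2$ at $r+1$ general points $P_0,\ldots,P_r$ (at $t$ generic) and the sky $Z$ (at $t=0$), carrying a relative divisor class that specialises on each fibre to $d\,\pi_t^*L-\sum_{i=0}^r m_i E_i^*$, where $L$ is the line at infinity. Semicontinuity of $h^0$ in flat families yields $\dim\cL_d(\underline m)\le \dim|D_0|$, while the expected dimension is constant along the family; so if $\cL_d(\underline m)$ is special on the general fibre, then $|D_0|$ is special on $Z$.

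Next I would exploit the rigidity of $Z$. By Proposition \ref{prop36}, $\C_{\I(\varrho)}$ is P-sufficient, hence $NE(Z)$ is polyhedral (Section \ref{sect32}); by Theorem \ref{thm32}(a), $NE_S(Z)$ is generated by the strict transform $\tilde L$ of $L$ and the strict transforms $E_0,\ldots,E_r$; and by Theorem \ref{thm32}(b) the cones $P(Z)$ and $\tilde P(Z)$ coincide and are regular. Decompose $D_0=F+M$ into fixed and mobile parts: by the structure of $NE_S(Z)$, one must have $F=a\tilde L+\sum b_i E_i$ with $a,b_i\ge 0$. The proximity inequalities ensure that the $E_i$ part of $F$ is accounted for by the multiplicities $m_i$, while the strict inequality at $Q_1$, combined with the fact that the AMS curve has a single place at infinity with tangent line $L$ (so $\tilde L$ meets the chain precisely at $Q_0$ and $Q_1$), produces $D_0\cdot\tilde L<0$ and thus $a\ge 1$. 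A Riemann--Roch calculation on $Z$, legitimised by the regularity of $\tilde P(Z)$, should show that the speciality $h^1(\OO_Z(D_0))$ is a strictly positive function of $a$ that vanishes precisely when $a\le 1$; hence $a\ge 2$.

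Finally I would deform back to the general fibre. The relation $D_0=2\tilde L+R$ with $R$ nef and non-special specialises to an analogous relation $D_t=2\Lambda_t+R_t$, where $\Lambda_t$ is the line through the two general points corresponding to $m_0$ and $m_1$; the non-speciality of $R_t$ for $t$ generic (preserved by openness of non-speciality for nef residual systems) shows that $2\Lambda_t$ is a fixed part of $\cL_d(\underline m)$, and on the blow-up of $r+1$ general points the strict transform of $\Lambda_t$ has self-intersection $-1$, giving the claimed $(-1)$-curve.

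The principal obstacle is the cohomological step at the end of the second paragraph: translating the speciality $h^1(\OO_Z(D_0))>0$ into the precise conclusion $a\ge 2$. This requires a tight intersection-theoretic analysis on $Z$, using both the regular structure of $\tilde P(Z)$ (so that Kawamata--Viehweg type vanishing applies to the mobile part $M$) and the AMS combinatorics to pin down the self-intersection $\tilde L^2$ and the intersection numbers of $\tilde L$ with the other generators of $NE_S(Z)$. A subsidiary difficulty is Paragraph 3, where one must verify that the residual decomposition deforms flatly so that the fixed component $2\tilde L$ actually lifts to $2\Lambda_t$ on the general fibre rather than being absorbed in a different degeneration.
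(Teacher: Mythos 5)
Your overall strategy --- specialising the $r+1$ general points to the configuration $\C_{\I(\varrho)}$, applying semicontinuity, and then exploiting Proposition \ref{prop36} and Theorem \ref{thm32} on the sky $Z$ --- is exactly the route the paper describes (the detailed proof lives in \cite{mons}). However, there is a genuine error at the pivot of your second paragraph, and it sits precisely where you yourself flag the ``principal obstacle''. You assert that the strict proximity inequality at $Q_1$ ``produces $D_0\cdot\tilde L<0$''. It does not: since $\tilde L=L^*-E_0^*-E_1^*$, one has $D_0\cdot\tilde L=d-m_0-m_1$, a quantity completely independent of the inequality $m_1-\sum_{Q_j\rightarrow Q_1}m_j>0$, and under the stated hypotheses it is perfectly possible that $d\geq m_0+m_1$ (in which case $D_0$ is nef, the system is non-special, and the theorem is vacuous). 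The implication must run the other way: speciality of $\cL_d(\underline m)$ forces, via semicontinuity and the structure of $NE_S(Z)$, that $k:=m_0+m_1-d\geq 1$, and the whole content of the theorem is to upgrade this to $k\geq 2$.

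That upgrade is exactly where the strict inequality at $Q_1$ is consumed, and your proposal never uses it for that purpose. If $k=1$, then $\tilde L$ occurs in the fixed part with multiplicity one and $D_0-\tilde L$ corresponds to the weights $(d-1;\,m_0-1,m_1-1,m_2,\ldots,m_r)$; the proximity inequality of this new cluster at $Q_1$ reads $(m_1-1)-\sum_{Q_j\rightarrow Q_1}m_j\geq 0$, which is precisely the hypothesised strict inequality, while the inequalities at $Q_0$ and at the remaining points are unchanged. Hence $D_0-\tilde L$ is nef, therefore non-special on $Z$ (this is the vanishing for nef classes that Theorem \ref{thm32} and the P-sufficiency of $\C_{\I(\varrho)}$ furnish), and the exact sequence $0\rightarrow \OO_Z(D_0-\tilde L)\rightarrow\OO_Z(D_0)\rightarrow\OO_{\tilde L}(-1)\rightarrow 0$ gives $h^1(\OO_Z(D_0))=h^1(\OO_Z(D_0-\tilde L))=0$, contradicting speciality; so $k\geq 2$. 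Once $k\geq 2$ is known, your third paragraph is an unnecessary detour: B\'ezout applied to the line $\Lambda$ through the two general points $P_0,P_1$ shows directly that every member of $\cL_d(\underline m)$ contains $\Lambda$ with multiplicity at least $m_0+m_1-d\geq 2$, and $\tilde\Lambda$ is a $(-1)$-curve because the points are general; no flat deformation of the residual decomposition is needed. In short, the architecture is correct, but the one step you identify as the crux is both left open and propped up by a false claim about where the hypotheses act.
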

Notice that the above result depends only on the P-Enriques diagram
associated with the resolution of the singularity at infinity of the
fixed Abhyankar-Moh-Suzuki curve, and not on the curve itself. These
P-Enriques diagrams are completely characterized (see \cite{fdb1}
and \cite{fdb2}) and each of them provides an unbounded family of
multiplicities for which the Harbourne-Hirschowitz Conjecture is
satisfied.
It is worth adding that some of the facts in which the proof of
Theorem \ref{amsc} is based are the above mentioned results
(Theorems \ref{thm32} and \ref{thm33}) on the structure of the
effective semigroup, the nef cone and the characteristic cone of the
surface obtained by eliminating the base points of a pencil defined
by a curve having one place at infinity. In addition, it is relevant
the fact that the configurations $\C_{\I(\varrho)}$ as in the
statement of Theorem \ref{amsc} are P-sufficient (Prop.
\ref{prop36}).
In \cite{mons} it is also generalized the algorithm given in
\cite{roe3} for bounding the regularity of a system of
multiplicities by using P-Enriques diagrams of pencils at infinity
associated with Abhyankar-Moh-Suzuki curves.

\end{document}